\newcommand{\E}{\ensuremath{\mathbb{E}}}
\providecommand{\U}[1]{\protect\rule{.1in}{.1in}}
\newtheorem{theorem}{Theorem}
\newtheorem{lemma}[theorem]{Lemma}
\newtheorem{proposition}[theorem]{Proposition}
\newtheorem{remark}[theorem]{Remark}
\newcommand{\T}{\mathbb{T}}
\newcommand{\R}{\mathbb{R}}
\newenvironment{acknowledgements}{%
  \begin{abstract}
}{%
  \end{abstract}
}
\def\yas#1{\textcolor{blue!98!black}{#1} }
\newcommand{\Div}{\text{div}}
\title[ Limit of Fokker-Planck equation    for   polymers  in turbulent flow]
{Small-scale turbulence limit of Fokker-Planck equation    for   polymers  in turbulent flow}
\author[Yassine Tahraoui]{Yassine Tahraoui}
\address{Scuola Normale Superiore, Piazza dei Cavalieri, 7, 56126 Pisa, Italy}
\email{\href{mailto:yassine.tahraoui at sns.it}{yassine.tahraoui at sns.it}}
\date\today
\keywords{Fokker-Planck  equation,  Turbulence, singular limit, Polymer flow, Cauchy distribution}
\subjclass{35Q84, 35B25, 76F99.}
\begin{document}
	\begin{abstract}
We study the singular limit of  Fokker-Planck equation of   polymers density as the dominant time-scale of small scale component of turbulent flow goes to zero. Here, we  complete the study  of Flandoli-Tahraoui   \href{https://www.sciencedirect.com/science/article/pii/S0022039625008162?via%3Dihub}{[J. Differ. Equ. (2026)]} about scaling limit as the space-scale of small scale component of turbulent flow goes to zero by using stochastic modeling of turbulence. Depending on certain parameters, related to turbulence modeling, the limit density has  generalized Cauchy distribution  for the end-to-end vector.  We discuss also the limit when we don't have a probability density limit.  Our approach is based on the 
derivation of  an appropriate estimates on $L^2$ with appropriate weight  and investigate the convergence.
	\end{abstract}
	\maketitle
\section{Introduction}
Polymers are complex molecular systems, that can be vaguely thought as a chain of springs. Understanding polymer dynamics is important, both theoretically and in practice. One application is that the presence of low concentrations of polymers can lead to a significant change in hydrodynamics, and one of the most important effects is the drag reduction in turbulence.
In a turbulent fluid,  polymers are usually found in two states called coil and stretched. The coil state is like a spherical or ellipsoidal rolled chain, which may be more or less elongated, but still in the roll position. The stretched state is when the chain is elongated, more similar to a straight line than a sphere. Strongly stretching turbulence may lead to the stretched state; when the polymer passes from one state to the other we speak of coil-stretch transition.\\

One of the aims of this contribution is to rigorously justify  the 
power law  distribution  of the end-to-end vector of polymers.
Consider in dimension $2$ or $3$ the following \textit{Hookean} model:
\begin{align}\label{Intro1}
	\begin{cases}
	dR_t&=\nabla u(X_t,t) R_tdt-\dfrac{1}{\beta}R_tdt+\sqrt{2}\sigma d\mathcal{W}_t, \\
	dX_t&=u(X_t,t)dt,
	\end{cases}
	\end{align}
where $X_t$ is the polymer position (the    center  of  mass) and $R_t$ is  the end-to-end vector, representing the orientation and elongation of the chain,  see e.g.  \cite[Section  4.2]{BriTL09}. The polymer is embedded into a fluid having velocity $u(t,x)$, which stretches $R_t$ by $\nabla u(x,t)$. The equation for $R_t$ contains also a damping (restoring) term with relaxation time $\beta$ and Brownian fluctuations 
$\sqrt{2}\sigma d\mathcal{W}_t$ where, to simplify the notation we have denoted by $\sigma^2$ the product $\frac{kT}{\beta}$, $k$ being Boltzmann constant and  $T$ being the temperature. \\

We consider a dilute (non-interacting) family of polymers subject to equations
\eqref{Intro1}, thus described by the kinetic equation for the density $f^{N,\tau}:=f^{N,\tau}\left(
x,r,t\right)  $ of polymers with position $x$ and length $r$ at time $t$%
\begin{align}\label{Intro2}
	\begin{cases}&\partial_t f^{N,\tau}+\Div_x(u^{N,\tau}f^{N,\tau})+\Div_r((\nabla u^{N,\tau}r-\dfrac{1}{\beta}r)f^{N,\tau})=	\sigma^2\Delta_r f^{N,\tau},\\	
    &u^{N,\tau}:=u^{N,\tau}\left(  x,t\right)  =u_{L}\left(  x,t\right)+u_{s}\left(  x,t\right)=u_{L}\left(  x,t\right)  +\circ\sum_{k\in K}%
\sigma_{k}^{N,\tau}\left(  x\right)  \partial_{t}W_{t}^{k},\\
		&f^{N,\tau}|_{t=0}=f_0^\tau.
			\end{cases}\end{align}

Here $u^{N,\tau}\left(x,t\right)  $ is the fluid velocity and assumed that
$u^{N,\tau}\left(  x,t\right)  $ is made of two components, a deterministic
large-scale one $u_{L}\left(  x,t\right)  $ and a stochastic one, modeling
small-scale turbulence, of the form $\sum_{k\in K}\sigma_{k}^{N,\tau}\left(
x\right)  \partial_{t}W_{t}^{k}$, acting in Stratonovich form.
We  stress the fact that the coefficients $\sigma_{k}^{N,\tau}\left(
x\right)  $ of the turbulent part depend on a parameter $N$ so that, when $N$
increases, they represent smaller and smaller space scales, precisely Fourier
frequencies $N\leq\left\vert k\right\vert \leq2N$, providing a separation of
scale regime. The noise acts on $f^{N,\tau}\left(
x,r,t\right)  $ in transport form, but incorporating also the stretching
action by the term $\nabla u^{N,\tau}\left(  x,t\right)  r$. The coefficients $(\sigma_k^{N,\tau})_k$ depends as well   on the dominant time-scale
$\tau$ of $u_s$, see  \autoref{subsection-time-scale}.
\par
In \cite{FlaTah24},  under suitable intensity assumption, such that the stretching term has a finite limit covariance, we proved that $f^{N,\tau}$ weakly converges to the solution $f_{\tau}:=f_{\tau}(x,r,t)$ of a deterministic equation,  with a new diffusion term in the $r$-variable, of the form
	\begin{align}\label{Intro3}
	\begin{cases}&\partial_t f_\tau+\Div_x(u_Lf_\tau)+\Div_r(\nabla u_Lr f_\tau))=	
 \Div_r(\dfrac{1}{\beta}r f_\tau)+\sigma^2\Delta_r f_\tau + 
 \dfrac{k_T}{2}\Div_r(A_b(r) \nabla_r f_\tau)        \\	
		&f_\tau|_{t=0}=f_0^\tau,
			\end{cases}\end{align}
   where $A_b(r)= ((b+1)\left\vert r\right\vert^2I-br\otimes r)$ ($b=2$ in 2D and $b=1$ in 3D), 
$k_T=C_da_\tau^2$ ($C_d=\frac{\pi \log(2)}{8}$ in 2D and  $C_d=\frac{8\pi \log(2)}{15}$ in 3D) and $a_\tau$ is an intensity parameter of the noise depending on $\tau$. We refer to \autoref{section-prel}, where we recall the assumptions and main results from \cite{FlaTah24} about the rigorous derivation of \eqref{Intro3},  as scaling limit of \eqref{Intro2}.
\par In the regime where the relaxation time of polymers $\beta$ and the dominant time-scale  $\tau$  satisfy $\beta=\zeta\tau,\quad \zeta>0$,  \eqref{Intro3} becomes
\begin{align}\label{sys-I}
\begin{cases}&\partial_tf_\tau+\Div_x(u_Lf_\tau)+\Div_r(\nabla u_Lr f_\tau)=	\dfrac{1}{\zeta\alpha\tau}\Div_r\Big(\alpha rf_\tau+\nabla_rf_\tau+\dfrac{1}{2} A_b(r)\nabla_r f_\tau\Big)\\
&f_\tau|_{t=0}=f_0^\tau, \quad (x,r)\in \mathbb{T}^d\times \mathbb{R}^d, d=2,3,
\end{cases}\end{align}
where $\alpha>0$ and related to the intensity of turbulence, see \autoref{subsection-time-scale} how to get \eqref{sys-I} from \eqref{Intro3} and \eqref{constant-meaning} for the interpretation of $\alpha$.\\

The aim of this contribution is to consider the singular limit as $\tau\to 0$ of \eqref{sys-I}. This completes the study done in  \cite{FlaTah24}, and rigorously justifies the physical predictions about the  power tail of the probability density function  of polymer end-to-end vector $R$  (see \textit{e.g.} \cite{Balk}). In \cite{FlaTah24},  we proved a first scaling limit (in space) by using a stochastic modeling of small-scale turbulent flow. In other words, we proved  $\lim_{N\to +\infty} f^{N,\tau}=f^{\tau}$ exists  and $f^{\tau}$ satisfies  \eqref{Intro3}. Here, we prove that $\lim_{\tau\to 0} f^{\tau}=\lim_{\tau\to 0}\lim_{N\to +\infty} f^{N,\tau}$ exists in appropriate sense and we identify the limit. To the best of author's knowledge, this is the first time that  scaling and singular limits\footnote{We use \textit{scaling limit} to denote the limit as $N\to +\infty$, which corresponds to a spatial scaling limit. On the other hand, the term \textit{singular limit} denotes the limit $\tau\to 0,$ which corresponds to the temporal singular parameter $\tau$. We use the word "singular"  because the limit problem (equilibrium) does not have the same nature as the initial one \eqref{sys-I} (kinetic).}, in space and time, have been considered using stochastic modeling of small-scales of turbulent flow in the presence of stretching and transport terms.
The main novelty of our contribution lies in the rigorous analysis based on analytical tools derived from the stochastic diffusion limit, homogenization theory and kinetic equations.

\subsubsection*{Sketch of the main results}
For the convenience of the reader, let us explain heuristically the main result of this work then precise results (see \autoref{MAin-thm-1} and \autoref{Main-thm-2}) and rigorous proofs are provided in the main body of the paper. We distinguish three cases: 
\begin{itemize}
    \item The case $\alpha>\frac{d}{2}$: there exists $f_0\in L^2(\mathbb{T}^d\times \mathbb{R}^d;(1+\frac{\vert r\vert^2}{2})^{\alpha}drdx)$  such that $$ \displaystyle\lim_{\tau\to 0}\int_{\mathbb{T}^d\times \mathbb{R}^d} \vert f_0^\tau-f_0\vert^2(1+\frac{\vert r\vert^2}{2})^{\alpha}drdx =0,$$ then
       $f_\tau \text{ converges to } \rho\otimes\dfrac{1}{Z} (1+\frac{\vert r\vert^2}{2})^{-\alpha} \text{ in }  L^2\big(0,T;L^2(\mathbb{T}^d\times \mathbb{R}^d;(1+\frac{\vert r\vert^2}{2})^{\alpha}drdx)\big),$
    where   $Z=\int_{\mathbb{R}^d}(1+\frac{\vert r\vert^2}{2})^{-\alpha}dr$ and 
  $\rho$ is the unique $L^2$-valued solution of
    \begin{align}
\begin{cases}&\partial_t \rho+u_L\cdot \nabla_x \rho=0\\
&\rho(x)|_{t=0}=\rho_0=\int_{\mathbb{R}^d}f_0(x,r)dr.
\end{cases}\end{align}
 
 \item The case $0<\alpha \leq \frac{d}{2}$ and 
    $(\int_{\mathbb{T}^d\times \mathbb{R}^d} \vert f_0^\tau\vert^2(1+\frac{\vert r\vert^2}{2})^{\alpha}drdx)_\tau$ is bounded:  there exists a subsequence of $(f_\tau)_\tau$ denoted by  $(f_{\tau_k})_{\tau_k}$ such that
    \begin{align*}
       f_{\tau_k} \text{ converges weakly-* to } 0 \text{ in }  L^\infty\big(0,T;L^2(\mathbb{T}^d\times \mathbb{R}^d;(1+\frac{\vert r\vert^2}{2})^{\alpha}drdx)\big).
    \end{align*}
            \item  The case $0<\alpha \leq \frac{d}{2}$ and  $(\int_{\mathbb{T}^d\times \mathbb{R}^d} \vert f_0^\tau\vert drdx)_\tau$ is bounded:  there exists a subsequence of $(f_\tau)_\tau$ denoted by  $(f_{\tau_k})_{\tau_k}$ such that
      $f_{\tau_k} \text{ converges weakly-* to } \nu \text{ in }  L^\infty\big(0,T;\mathcal{M}(\mathbb{T}^d\times \mathbb{R}^d)\big),$
      where $\mathcal{M}$ denotes the space of Radon measures and $\nu$ satisfies in weak sense the following
        \begin{align}\label{case3}
            \Div_r\big(\alpha r\nu+\nabla_r \nu+\dfrac{1}{2}A_b(r)\nabla_r \nu\big)=0.
        \end{align}  
        The solution   to \eqref{case3} is  $\nu=g\otimes \mu_\alpha,$ where $g$ is an element of $L^\infty(0,T;\mathcal{M}(\mathbb{T}^d))$ and $\mu_\alpha \in \mathcal{M}(\mathbb{R}^d)$  defined as follows
\begin{align*}
     \langle  \mu_\alpha,\varphi\rangle_{\mathcal{M},C_c}:=\int_{\mathbb{R}^d}  (1+\frac{\vert r\vert^2}{2})^{-\alpha}\varphi(r)dr, \quad \forall \varphi\in C_c(\mathbb{R}^d).
\end{align*}
\end{itemize}
We refer to \autoref{Sec-main-results} for  a discussion about the physical interpretation of the above results.\\

Stochastic diffusion limit results are related to homogenization theory in very broad sense. Regarding this latter and without trying to be exhaustive, we refer   to   \cite{Bensoussan1978asymptotic} for deterministic  techniques, \cite{Pavliotis2008multiscale} for PDE homogenization and  stochastic modeling (see also \cite{MK,Jikov2012homogenization}). However, it is important to notice that the strategy in stochastic diffusion limits are different from classical homogenization and especially they are very efficient for generalizations to nonlinear problems.\\

The origin of stochastic diffusion limits, based on the It\^o-Stratonovich corrector, is the paper \cite{Galeati} that threw a new light on passive scalars subject to turbulent models. The result of \cite{Galeati} has been generalized in several directions, showing in particular its strength for
nonlinear scalar problems (see \cite{FlaElibook} for a review). 
In particular, we refer to \cite{Flandoli2021scaling,Flandoli2024quantitative} for the results in this direction about  the  2D Euler  and  2D Navier-Stokes equations.   Moreover, this approach leads to   a  delayed  blow-up   and   dissipation enhancement results, see \textit{e.g.} \cite{Flandoli2021delayed,luo2025enhanced}. We refer  also to \cite[Section 1.2]{FlaTah24} for extended literature review on the stochastic diffusion limits and transport noise. It is worth underlying that the mentioned results concern scalar problems with transport noise. However,   when the turbulent fluid acts on
vector fields,  in addition to   transport there is    
\textit{stretching}: namely the differential $D\phi_{t}$ of the Lagrangian flow $\phi_{t}$ of a fluid   produces a modification of the length of vectors $v$,  which leads to  an increase of length for many possible directions $v$. The result of this work can be classified in this direction. \\

More difficult has been adapting the ideas to advected vector fields, because of lack of control on the stochastic stretching, when the diffusive scaling limit is performed. Nevertheless, positive results for particular models have been obtained, adapting the classical scaling of \cite{Galeati}, see  \textit{e.g.} \cite{ButFlaLuo,FL1,FlandoliLuo2024,Papini}. Recently, after considering a new scaling such as   the noise covariance goes to zero but a suitable covariance built on derivatives of the noise converges to a non zero limit, new results have been obtained in \cite{BFLT2024,FlaTah24} in the case of   stochastic transport-stretching. In \cite{BFLT2024}, we  introduced a background stochastic Vlasov equation behind stochastic transport and advection equations which gives additional information on the fluctuations and oscillations of solutions  based on Young measures. We first developed the theory for the stochastic transport of a passive scalar.  Then we developed the theory for a passive vector field, where stretching also acts in addition to  transport. In the case of a passive vector field the background Vlasov equation adds completely new statistical information to the stochastic advection equation and  the theory developed may help to recognize the existence of large values of the length of the vectors, where   the physical phenomenon of magnetic dynamo is an example. In \cite{FlaTah24}, we considered the new scaling by considering the Fokker-Planck equation \eqref{Intro2} of polymers density and obtained the result described in the beginning of this introduction. Namely, \eqref{Intro3} as a limit of \eqref{Intro2}.\\

On the other hand, kinetic equations in general, which correspond to the mesoscopic scale,  appear in the study of many problems in physics, biology, and other fields.  Different types of these equations have been studied extensively, both in terms of the existence of solutions and their properties, as well as the asymptotic behavior with respect to the parameters appearing in the equations. In general, the main interest when studying asymptotic is to derive equations of macroscopic quantities such as density and probability distribution with respect to mesoscopic variable such as particle velocity and the polymers end-to-end vector in our case. Without trying to be exhaustive,  the author studied  in \cite{Poupaud1992} the  asymptotic of electron distribution function in semiconductor kinetic theory where the equation is  given by Boltzmann transport equation for high electric fields. He proved that the limit  distribution function is given by the tensor product of the density (satisfying a  linear transport equation) and a probability distribution function of the velocity variable  satisfying an  appropriate PDE. In plasma, people are interested in studying the  asymptotic of the  Vlasov-Poisson-Fokker-Planck System for example.  There are two  important scalings, the low field limit and the high field limit.  This leads to different scalings of the kinetic equations and requires a separate analysis, we refer \textit{e.g.} to \cite{Masmoudi-Vlasov,Nieto2001high}. In \cite{Masmoudi-Vlasov,Poupaud2000}, the limit  distribution function is given by the tensor product of the density (satisfying a  drift-diffusion equation) and a normalized Maxwellian with zero mean  for  the microscopic  velocity. In \cite{fetecau2015}, the authors  investigate a zero inertia limit of some kinetic equation  where the limit is given by the tensor product of the density (satisfying a   non linear transport equation) and   monokinetic distribution  of the velocity variable. We refer \textit{e.g.} to \cite{Ghattassi,goudon2005,Herda2018,jabin2000,Nieto2001high} for more results about  the asymptotics of the kinetic equations.\\

Finally, it is worth mentioning that due to the stretching term in \eqref{Intro1} and the appropriate stochastic scaling in \cite{FlaTah24}, the matrix $A_b$ is generated after the first scaling limit. This matrix $A_b$ is a fundamental key giving rise to the Cauchy distribution for the end-to-end vector.

\subsubsection*{Structure of the paper} The manuscript is organized as follows: in \autoref{section-prel}, we present some preliminaries and  formulate the problem. Then, we  collect the main results   and the physical interpretation of  our work in \autoref{Sec-main-results}. \autoref{section-estimate} is devoted to the proof of a uniform estimate with respect to $\tau$ and some results about the  continuity equation associated with \eqref{sys-I}.  In \autoref{section-cv}, we prove the convergence results.  We conclude the paper with  \autoref{Appendix} containing a proof of \autoref{lemma-appendix}.

\section{Preliminaries and formulation of the  problem}\label{section-prel}
The aim of this section is to recall the main ingredients of the rigorous derivation of \eqref{sys-I} as the scaling limit of \eqref{Intro2}. We
consider a dilute  family of polymers  described by the kinetic equation \eqref{Intro2} for the density $f^{N,\tau}.$
In order to make the manuscript  self-contained and  for the convenience of the reader, let us recall the structure of $(\sigma_{k}^{N,\tau})_k$. For more details we refer to  \cite{FlaTah24}.
	\subsection{Structure of the small scales}\label{assumption_noise-2D}
    In order to clarify the stochastic modeling of small scales, $\tau$ denotes the  dominant time-scale  of 
     $u_s$   and $\ell\sim N^{-1}$ its space scale. 
    \subsubsection*{The 2D case}
Consider $\mathbb{Z}_{0}^{2}:=\mathbb{Z}^2-\{(0,0)\}$ divided into its four quadrants (write
$k=\left(  k_{1},k_{2}\right)  $)%
\begin{align*}
K_{++}  & =\left\{  k\in\mathbb{Z}_{0}^{2}:k_{1}\geq0,k_{2}>0\right\};\quad
K_{-+}   =\left\{  k\in\mathbb{Z}_{0}^{2}:k_{1}<0,k_{2}\geq0\right\}  \\
K_{--}  & =\left\{  k\in\mathbb{Z}_{0}^{2}:k_{1}\leq0,k_{2}<0\right\} ; \quad
K_{+-}   =\left\{  k\in\mathbb{Z}_{0}^{2}:k_{1}>0,k_{2}\leq0\right\}
\end{align*}
and set%
\begin{align*}
K_{+}  & =K_{++}\cup K_{+-};  \quad 
K_{-}   =K_{-+}\cup K_{--}  \text{  and }   K=K_{+}\cup K_{-}.
\end{align*}
Let $(\Omega,\mathcal{F},(\mathcal{F}_t)_t,P)$  be  a   complete   filtered probability space.  
    Define \footnote{For $y=(y_1,y_2)\in   \R^2,$  $y^\perp$   stands  for $(-y_2,y_1).$}
\begin{align*}
\sigma_{k}^{N,\tau}\left(  x\right)     =\theta_{k}^{N,\tau}\frac{k^{\perp}}{\left\vert
k\right\vert }\cos k\cdot x,\qquad k\in K_{+},  \quad 
\sigma_{k}^{N,\tau}\left(  x\right)     =\theta_{k}^{N,\tau}\frac{k^{\perp}}{\left\vert
k\right\vert }\sin k\cdot x,\qquad k\in K_{-}%
\end{align*}
where
\begin{align}\label{scale-space}
\theta^{N,\tau}_{k}&=\dfrac{a_\tau}{\left\vert k\right\vert^2 },  \qquad   N \leq \left\vert k\right\vert \leq  2N,\qquad N\in \mathbb{N}^* ; \quad
\theta^{N,\tau}_{k}=0 \qquad \text{elsewhere.}
\end{align}
where $a_\tau$ is a positive constant measuring the intensity, see \autoref{subsection-time-scale}. Let us also consider a family $(W_t^k)_t^{k\in    \mathbb{Z}^2_0}$ of independent Brownian motions on the probability space $(\Omega,\mathcal{F},P)$.  Then, set $u_s(x,t)=\sum_{k\in K}%
\sigma_{k}^{N,\tau}\left(  x\right)  \partial_{t}W_{t}^{k}.$

\subsubsection*{The 3D case}
We  introduce the partition $\mathbb{Z}^3_0=\Gamma_{3,+}\cup \Gamma_{3,-}$\footnote{$\mathbb{Z}^3_0=\mathbb{Z}^3-\{(0,0,0)\}$} such that $\Gamma_{3,+}=-\Gamma_{3,-} $
and  we consider a family  of  real valued independent Brownian motions $(B_t^{k,j})_t^{k\in    \mathbb{Z}^3_0}, j\in \{1,2\}$   defined  on the complete   filtered probability space    $(\Omega,\mathcal{F},(\mathcal{F}_t)_t,P)$, that is 
$ \mathbb{E}(B_t^{k,j}B_s^{l,m})= \min(t,s)\delta_{k,l}\delta_{j,m}.$
Then,   we introduce a sequence of complex-valued Brownian motions adapted to $(\mathcal{F}_t)_t$  defined as follows
\begin{align*}
    W^{k,j}_t&=\begin{cases}
        B^{k,j}_t+iB^{-k,j}_t & \textit{if } k\in \Gamma_{3,+} \\
   B^{-k,j}_t-iB^{k,j}_t & \textit{if } k\in \Gamma_{3,-}.
    \end{cases}
    \end{align*}
  Let $N\in \mathbb{N}^*$,    define
  $
   \theta^{N,\tau}_{k,j} =\dfrac{a_\tau}{\lvert k\rvert^{5/2}}\mathbf{1}_{\{N\leq \lvert k\rvert\leq 2N\} },\  
$ for   a positive constant $a$.
Then, for each $ k\in \mathbb{Z}^{3}_0,\ j\in\{1,2\}$ we denote by $\sigma_{k,j}^{N,\tau}(x)=\theta_{k,j}^{N,\tau}  a_{k,j}e^{ik\cdot x}$, where $\{\frac{k}{\lvert k\rvert}, a_{k,1}, a_{k,2}\}$ is an orthonormal system of $\R^3$ for $k\in \Gamma_{3,+}$ and $a_{k,j}=a_{-k,j}$ if $k\in \Gamma_{3,-}$. Set
\begin{align}\label{noise-3D}
  \mathbf{W}^{N,\tau}(t,x)  =\sum_{{\substack{ k\in \mathbb{Z}^3_0, j\in \{1,2 \} }}}\theta_{k,j}^{N,\tau}  a_{k,j}e^{ik\cdot x}W^{k,j}(t),
\end{align}
 and consider 
$u_s= \partial_t \mathbf{W}^{N,\tau}$ in the 3D case. \\

Seeking not to overload the notation, we simply recall the result of the scaling limit and refer to the main theorems in \cite{FlaTah24} about the precise mathematical framework of  well-posedness and the various estimates necessary for the analysis. Thus, the following subsection concerns the rigorous derivation of \eqref{Intro3} as a scaling limit of \eqref{Intro2}.
\subsection{Scaling limit results} Let us present the following scaling limit result based on \cite{FlaTah24}.
 \subsubsection{Notation    and functional  setting }
 Let $d\in \{2,3\},$
we      consider the periodic boundary conditions with respect to  the spacial variable    $x$,    namely  $x$ belongs to  the  d-dimensional torus $\T^d= (\mathbb{R}/2\pi\mathbb{Z})^d.$  On  the other   hand,   the end-to-end  vector  variable  $r$   belongs to  $\R^d.$   Let	$m\in	\mathbb{N}^*$  and  introduce the following Lebesgue and Sobolev spaces with polynomial weight, namely
\begin{align*}
&L^2_{r,m}(\mathbb{T}^d\times\mathbb{R}^d):=\{f:\mathbb{T}^d\times\mathbb{R}^d\to \mathbb{R}:  \int_{\mathbb{T}^d\times\mathbb{R}^d}\hspace{-0.7cm}\vert f(x,r)\vert^2(1+\vert r \vert^2)^{\frac{m}{2}}dxdr:= \Vert f\Vert_{L^2_{r,m}}^2 < \infty \}.\end{align*}	
Note that $L^2_{r,0}(\mathbb{T}^d\times\mathbb{R}^d)$ coincide with the  Lebesgue space $L^2(\mathbb{T}^d\times\mathbb{R}^d)$.
We	will	use	the	following	notation	$L^2_{r,2}(\mathbb{T}^d\times\mathbb{R}^d):=H.$	
We	recall	the definition of	 inner products	defined	on	the	spaces $H$ and $L^2(\mathbb{T}^d\times\mathbb{R}^d)$.
\begin{align*}
(h,g)_H&:= \int_{\mathbb{T}^d\times\mathbb{R}^d} h(x,r) g(x,r)(1+\vert r \vert^2)dxdr,\quad	\forall	g,h \in H;\\
(h,g)&:= \int_{\mathbb{T}^d\times\mathbb{R}^d} h(x,r) g(x,r)dxdr,\quad	\forall	g,h \in L^2(\mathbb{T}^d\times\mathbb{R}^d).
\end{align*}

\subsubsection{Scaling limit as $N\to +\infty$}
Let $T>0$ and $u_L=u_L(x,t)$ be a given  large scale component  such that
 $u_L\in C\big([0,T], C^2(\mathbb{T}^d;\mathbb{R}^d)\big)$ and $\Div_x (u_L)=0.$\\
 
 Since	$L^{\infty}(0,T;H)$	is	not	separable,	
it is	convenient	to	recall	the	following	space:
$$L^2_{w-*}(\Omega;L^\infty(0,T;H))=\{ 	u:\Omega\to	L^\infty(0,T;H)	\text{	is	 weakly-* measurable		and	}	\E\Vert	u\Vert_{L^\infty(0,T;H)}^2<\infty\},$$
where	weakly-* measurable	stands	for	the	measurability	when	$L^\infty(0,T;H)$	is	endowed	with	the	$\sigma$-algebra	generated	by	the	Borel	sets	of	weak-*	topology, we recall that (see 	\cite[Thm. 8.20.3]{Edwards}) $$ L^2_{w-*}(\Omega ;L^\infty([0, T];H))\simeq \left(L^2(\Omega ;L^1([0, T];H^\prime)) \right)^\prime .$$	

\par Assume that $f_0^\tau \in H$ and $f^{N,\tau}$ be the unique  quasi-regular weak solution  of \eqref{Intro2} (see   \cite[Def. 5, Thm. 7 and Thm. 8]{FlaTah24}).  Let us recall the main ideas to get \eqref{Intro3} from \eqref{Intro2}. \\

By using  the stochastic modeling of small-scale, we get  a stochastic Fokker-Planck equation in Stratonovich form:
\begin{align}\label{Stra-FP-second-limit}
	\begin{cases}&\partial_tf^{N,\tau}+\Div_x(u_L^{N,\tau}f^{N,\tau})+\Div_r((\nabla u_L^{N,\tau}r-\dfrac{1}{\beta}r)f^{N,\tau})=	\sigma^2\Delta_rf^{N,\tau}\\	&\quad -\sum_{k\in K}\sigma_k^N.\nabla_xf^{N,\tau}\circ\partial_tW^k -\sum_{k\in K}(\nabla \sigma_k^Nr).\nabla_rf^{N,\tau}\circ \partial_tW^k\\	
		&f^{N,\tau}|_{t=0}=f_0^\tau.
\end{cases}\end{align}
We write \eqref{Stra-FP-second-limit} in  It\^o form, where we need to compute  It\^o-Stratonovich corrector, see \cite[Section 3]{FlaTah24}. By using the space-homogeneity and mirror symmetry of the covariance  operator $Q(x,y):=\sum_{k\in K}\sigma_k^N(x)\otimes\sigma_k^N(y),$ we get 
the following  It\^o form  of the  stochastic   Fokker Planck equation    	
	\begin{align}\label{Ito-FP}
		\begin{cases}&df^{N,\tau}+\Div_x(u_L^{N,\tau}f^{N,\tau})dt+\Div_r((\nabla u_L^{N,\tau}r-\dfrac{1}{\beta}r)f^{N,\tau}) dt\\	&=	\sigma^2\Delta_rf^{N,\tau}dt-\sum_{k\in K}\sigma_k^{N,\tau}.\nabla_xf^{N,\tau}dW^k -\sum_{k\in K}(\nabla \sigma_k^{N,\tau}r).\nabla_rf^{N,\tau} dW^k\\
			&\quad +\alpha_N\Delta_x f^{N,\tau}dt+ \dfrac{1}{2}\Div_r(\sum_{k\in K}\left((\nabla \sigma_k^{N,\tau}r) \otimes (\nabla \sigma_k^{N,\tau}r)\right) \nabla_rf^{N,\tau})dt\\
			&f^{N,\tau}|_{t=0}=f_0^\tau.
	\end{cases}\end{align}
	where 
	$$A_N(r)=\sum_{k\in K}\left((\nabla \sigma_k^Nr) \otimes (\nabla \sigma_k^Nr)\right) = k_TA_b(r)+O(\dfrac{1}{N})P(r), \quad P \text{ is a polynomial of second degree. }$$
	By using the scaling \eqref{scale-space}, we see that  $A_N(r) \to k_TA_b(r) $ and  $\alpha_N \to 0$.  Thus, we can interpret the emergence of the part including $A_b(r)$ as a consequence of the stretching effect  and the choice of scaling \eqref{scale-space}. More precisely, we get (see  \cite[Thm. 9 and Subsection 7.4.]{FlaTah24}) the following result.


\begin{theorem}\label{Thm-scaling-N}	 Under the assumption that the small scales have the given structure in \autoref{assumption_noise-2D}. There  exists  a   new probability space,	denoted by  the same    way  (for simplicity)    $(\Omega,\mathcal{F},P)$,  $f_\tau\in L^2_{w-*}(\Omega ;L^\infty([0, T];H))),  \nabla_rf_\tau\in L^2(\Omega ;L^2([0, T];H)))   $    such    that  the following   convergence holds   (up to  a   sub-sequence)\begin{align*}
    f^{N,\tau} &\rightharpoonup f_\tau \text{		in	}  L^2_{w-*}(\Omega ;L^\infty([0, T];H))),\quad
         \nabla_rf^{N,\tau} \rightharpoonup \nabla_r f_\tau \text{		in	}  L^2(\Omega ;L^2([0, T];H)).\end{align*} Moreover   $f_\tau$ is   the unique  solution    of  the following   problem:    	$P$-a.s. for any $t\in [0,T]$: \begin{align}&\int_{\mathbb{T}^d}\int_{\mathbb{R}^d}f_\tau(x,r,t)\phi(x)\psi(r) drdx-\int_{\mathbb{T}^d}\int_{\mathbb{R}^d}f_0^\tau(x,r)\phi(x)\psi(r) drdx\notag\\&= \int_0^t\int_{\mathbb{T}^d}\int_{\mathbb{R}^d}f_\tau(x,r,s)\left(u_L(x,s)\cdot \nabla_x \phi(x)\psi(r)+ (\nabla u_L(s,x)r-\dfrac{1}{\beta}r) \cdot\nabla_r\psi(r)\phi(x)\right) dr dxds\notag\\	&-	\int_0^t\int_{\mathbb{T}^d}\int_{\mathbb{R}^d}\sigma^2\nabla_rf_\tau(x,r,s) \cdot\nabla_r\psi(r)\phi(x)+\dfrac{k_T}{2}A_b(r) \nabla_r f_\tau(x,r,s)\cdot\nabla_r\psi(r)\phi(x) dr dxds,\notag\end{align}
for any $\phi  \in C^\infty(\mathbb{T}^d)$ and  $\psi  \in C^\infty_c(\mathbb{R}^d),$      $A_b(r)= ((b+1)\left\vert r\right\vert^2I-br\otimes r)$ ($b=2$ in 2D and $b=1$ in 3D), 
$k_T=C_da_\tau^2$ ($C_d=\frac{\pi \log(2)}{8}$ in 2D and  $C_d=\frac{8\pi \log(2)}{15}$ in 3D) and $a_\tau$ is an intensity parameter of the noise. \end{theorem}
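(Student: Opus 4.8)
The plan is to run the now-standard programme for stochastic diffusion limits of transport-type noise (cf.\ \cite{Galeati}, and \cite{BFLT2024,FlaTah24} for the transport--stretching case): uniform energy estimates in $N$, weak(-$*$) compactness, passage to the limit in the It\^o formulation \eqref{Ito-FP}, and uniqueness of the limit. Everything is carried out in the polynomially weighted space $H=L^2_{r,2}$, since the stretching term $\nabla u^{N,\tau}r$ makes the unweighted $L^2$ estimate useless. The structural fact driving the whole argument is that, in the scaling \eqref{scale-space}, the \emph{sum} $\sum_{N\le|k|\le 2N}|k|^2(\theta_k^{N,\tau})^2$ --- which up to a bounded factor is the covariance generating $A_N(r)$ --- stays bounded, whereas each individual amplitude $\theta_k^{N,\tau}$, as well as $|k|\theta_k^{N,\tau}$, on the active band $N\le|k|\le 2N$ tends to $0$; this dichotomy is exactly what makes the martingale part vanish in the limit while the new diffusion $\tfrac{k_T}{2}\Div_r(A_b\nabla_r\cdot)$ survives.

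\emph{Uniform estimates.} From \eqref{Ito-FP} --- where It\^o's formula is licit after the regularization/commutator argument of \cite{FlaTah24} --- I would apply It\^o's formula to $\|f^{N,\tau}(t)\|_H^2$, that is, test the equation against $(1+|r|^2)f^{N,\tau}$. The stochastic integrals are local martingales killed by the expectation and controlled in $\sup_t$ by Burkholder--Davis--Gundy. The It\^o corrector $\alpha_N\Delta_x f^{N,\tau}$ of the $x$-transport noise combines with the matching quadratic-variation term and cancels by space homogeneity of $Q$, and $\Div_x(u_L f^{N,\tau})$ is skew because $\Div_x u_L=0$. The remaining terms are handled by integration by parts against the weight: $\sigma^2\Delta_r f^{N,\tau}$ produces $-\sigma^2\|\nabla_r f^{N,\tau}\|_H^2$ plus an $O(\|f^{N,\tau}\|_H^2)$ remainder; the new diffusion $\tfrac12\Div_r(A_N(r)\nabla_r f^{N,\tau})$ leaves the coercive term $-\tfrac12\int A_N(r)\nabla_r f^{N,\tau}\cdot\nabla_r f^{N,\tau}(1+|r|^2)$, which for $N$ large is $\le -c\int|r|^2|\nabla_r f^{N,\tau}|^2(1+|r|^2)$ (using $A_b(r)v\cdot v\ge|r|^2|v|^2$ together with $A_N=k_TA_b+O(1/N)P$, $P$ of degree two), plus a remainder of the form $\int|r|^3|\nabla_r f^{N,\tau}|\,|f^{N,\tau}|(1+|r|^2)^{1/2}$ which Young's inequality splits into a small fraction of that coercive term and $C\|f^{N,\tau}\|_H^2$; and the stretching--restoring drift $\Div_r((\nabla u_L r-\tfrac1\beta r)f^{N,\tau})$ contributes $\lesssim(\|\nabla u_L\|_{L^\infty}+\tfrac1\beta)\|f^{N,\tau}\|_H^2$ since $|r\cdot\nabla_r(1+|r|^2)|\lesssim 1+|r|^2$. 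Taking expectations and Gronwall then give, uniformly in $N$,
\[
\E\sup_{t\in[0,T]}\|f^{N,\tau}(t)\|_H^2+\E\int_0^T\|\nabla_r f^{N,\tau}(s)\|_H^2\,ds\le C(T,u_L,\sigma,\beta,a_\tau)\,\|f_0^\tau\|_H^2 .
\]

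\emph{Compactness, identification, uniqueness.} Since $L^2(\Omega;L^1([0,T];H'))$ is separable, bounded sets of its dual $L^2_{w-*}(\Omega;L^\infty([0,T];H))$ are sequentially weak-$*$ compact, and $L^2(\Omega;L^2([0,T];H))$ is reflexive; hence along a subsequence $f^{N,\tau}\rightharpoonup f_\tau$ and $\nabla_r f^{N,\tau}\rightharpoonup\nabla_r f_\tau$, the limit gradient being identified by closedness of the distributional $r$-derivative under these weak convergences. Testing \eqref{Ito-FP} against $\phi(x)\psi(r)$ with $\phi\in C^\infty(\T^d)$, $\psi\in C_c^\infty(\R^d)$ and letting $N\to\infty$: the drift terms and $\sigma^2\Delta_r$ pass by linearity; $\alpha_N\Delta_x f^{N,\tau}\to0$ because $\alpha_N$, being of order $\sum_k(\theta_k^{N,\tau})^2$, tends to $0$; and $A_N(r)=k_TA_b(r)+O(1/N)P(r)\to k_TA_b(r)$ uniformly on the compact set $\mathrm{supp}\,\psi$, so the new diffusion term converges to $\tfrac{k_T}{2}\int A_b(r)\nabla_r f_\tau\cdot\nabla_r\psi\,\phi$. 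The two martingale terms are killed by moving the $x$- and $r$-derivatives onto the test function (using $\Div_x\sigma_k^{N,\tau}=0=\mathrm{tr}\,\nabla\sigma_k^{N,\tau}$), It\^o's isometry and Parseval in $x$: their second moments are bounded by the supremum over $N\le|k|\le 2N$ of $(\theta_k^{N,\tau})^2$, respectively of $|k|^2(\theta_k^{N,\tau})^2$, times the uniform bound above, and both suprema vanish by \eqref{scale-space}, whereas the corresponding \emph{sums} --- which carry the covariance information --- remain bounded. This yields the announced weak formulation for $f_\tau$ (a priori $\omega$-by-$\omega$; since its unique solution is deterministic, $f_\tau$ is in fact non-random, and should the $f^{N,\tau}$ live on different stochastic bases a Skorokhod-type representation is only a bookkeeping device). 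Uniqueness in $H$ of the limit equation follows by testing the difference of two solutions against itself in $H$: $\sigma^2\Delta_r+\tfrac{k_T}{2}\Div_r(A_b\nabla_r\cdot)$ is dissipative and the transport--stretching--restoring part is $\lesssim\|\cdot\|_H^2$ exactly as above, so Gronwall closes the argument; uniqueness then upgrades the convergence to the whole sequence.

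\emph{Main obstacle.} I expect the weighted energy estimate to be the crux: after integrating $\Div_r(A_b(r)\nabla_r f)$ by parts against $(1+|r|^2)f$, with $A_b$ growing like $|r|^2$, one must verify that the surviving coercive term dominates every weight-induced remainder and that this coercivity is uniform in $N$ despite the $O(1/N)$ perturbation $P(r)$. The second, more conceptual, point is the negligibility of the transport--stretching martingale: its quadratic variation does \emph{not} tend to zero --- that is exactly what generates the $A_b$ diffusion --- so the required smallness must come from orthogonality of high frequencies combined with the decay of the per-mode amplitudes on $N\le|k|\le 2N$; the whole point is the contrast between ``$\sum$ bounded'' and ``$\sup$ small''.
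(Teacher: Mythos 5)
Your proposal is essentially the right reconstruction, but note that the present paper does not prove \autoref{Thm-scaling-N} at all: it is recalled from \cite[Thm.~9 and Subsection~7.4]{FlaTah24}, and the strategy there --- uniform weighted $H$-estimates in $N$, weak(-$*$) compactness, passage to the limit in the It\^o form \eqref{Ito-FP} with the transport and stretching martingale terms killed by the contrast between the boundedness of $\sum_k |k|^2(\theta_k^{N,\tau})^2$ and the smallness of $\sup_{N\le|k|\le 2N}|k|\,\theta_k^{N,\tau}$ (exploited through Parseval/Bessel orthogonality in $x$), followed by identification of the limit covariance $k_TA_b$ and uniqueness of the limit equation --- is the same programme you outline. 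The ``new probability space'' in the statement reflects that the compactness argument in \cite{FlaTah24} is carried out at the level of laws (Skorokhod-type representation), which, as you correctly observe, is ultimately immaterial here because the limit problem has a unique deterministic solution.
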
 

 \subsection{The derivation of  \eqref{sys-I}}\label{subsection-time-scale}
       We recall that $\sigma^2$ is the  product $\frac{kT}{\beta}$, $k$ being Boltzmann constant and  $T$ being the temperature. Hence
 $   \sigma^2=\frac{C_2}{\beta},$ where $ C_2=kT>0.$
In order to formulate the main equation, let us present the following.
\subsubsection{Noise specification and the parameter $a_\tau$}
We focus on the 2D case and 3D  case follows in the same way.
Denote by $u_s$ the  small scale of the turbulent velocity, its  vortex structures  have a dominant time-scale
$\tau$ and space scale $\ell$, and the associated turbulent kinetic energy
$k_{T}$ (of order $\frac{\ell^{2}}{\tau^{2}}$, in other words, $k_T=\kappa\frac{\ell^{2}}{\tau^{2}}$ for a positive constant $\kappa$). In a Gaussian approximation of
the model, a reasonable choice is %
\[
u_s\left(  x,t\right)  :=\frac{\sqrt{k_{T}}}{\ell}\sum_{N\leq\left\vert
k\right\vert \leq2N}\frac{1}{\left\vert k\right\vert ^{2}}\frac{k^{\perp}%
}{\left\vert k\right\vert }e_{k}\left(  x\right)  Z_{t}^{k},%
\]
where the functions $e_{k}\left(  x\right)  $, sine and cosine, $N\sim\ell^{-1}$, $Z_{t}^{k}$ are stationary
Ornstein-Uhlenbeck processes solution of
\[
dZ_{t}^{k}=-\frac{1}{\tau}Z_{t}^{k}dt+\sqrt{\frac{2}{\tau}}dW_{t}^{k},%
\]
where $(W_{t}^{k})_k$ are independent Brownian motions. The average kinetic energy is given by
\[
\mathbb{E}\left[  \left\vert u_s\left(  x,t\right)  \right\vert ^{2}\right]
=\frac{k_{T}}{\ell^{2}}\sum_{N\leq\left\vert k\right\vert \leq2N}\frac
{1}{\left\vert k\right\vert ^{4}}e_{k}^{2}\left(  x\right),
\]
and $\frac{1}{\ell^{2}}\sum_{N\leq\left\vert k\right\vert \leq2N}\frac
{1}{\left\vert k\right\vert ^{4}}e_{k}^{2}\left(  x\right)$
has a unitary variance  size. The intensity of the turbulent flow is given by $\sqrt{k_T}$.
Therefore, after using   the
approximation%
\[
\lim_{\tau\rightarrow0}\mathbb{E}\left[  \left(  \frac{1}{\sqrt{\tau}}\int%
_{0}^{t}Z_{s}^{k}ds-W_{t}^{k}\right)  ^{2}\right]  =0,
\]
we write
\[
u_s\left(  x,t\right)  dt:=\sqrt{\tau k_{T}}\sum_{N\leq\left\vert
k\right\vert \leq2N} \frac{1}{\ell}\frac{1}{\left\vert k\right\vert ^{2}}\frac{k^{\perp}%
}{\left\vert k\right\vert }e_{k}\left(  x\right)  dW_{t}^{k}.
\]
Thus, $a_\tau$  is of the order  $\dfrac{\ell}{\sqrt{\tau}}C(\ell)$ with $C(\ell)=\dfrac{1}{\ell}.$ More precisly,  $a_\tau=\sqrt{\kappa}\dfrac{\ell}{\sqrt{\tau}}C(\ell)$.  Thus we infer that $a_\tau=\dfrac{C_3}{\sqrt{\tau}}$ with $C_3=\sqrt{\kappa}>0$.
Let us formulate the main equation. It is clear 
 that we can write the limit equation given by \autoref{Thm-scaling-N}  as follows
	\begin{align}\label{Limit-FP-eqn}
	\begin{cases}&\partial_t f_\tau+\Div_x(u_Lf_\tau)+\Div_r(\nabla u_Lr f_\tau))=	
 \Div_r(\dfrac{1}{\beta}r f_\tau)+\sigma^2\Delta_r f_\tau + 
 \dfrac{C_da_\tau^2}{2}\Div_r(A_b(r) \nabla_r f_\tau)        \\	
		&f_\tau|_{t=0}=f_0^\tau.
			\end{cases}\end{align}
In the regime where the relaxation time of polymers and the dominant time-scale of the small scale turbulent flow satisfies
$\beta=\zeta\tau, \zeta >0$, we obtain  
\begin{align}\label{Limit-FP-eqn-2}
\begin{cases}&\partial_tf_\tau+\Div_x(u_Lf_\tau)+\Div_r((\nabla u_Lr)f_\tau)=	 \Div_r\Big(\dfrac{1}{\zeta\tau}rf_\tau+\dfrac{ C_2}{\zeta\tau}\nabla_rf_\tau+\dfrac{C_dC_3^2}{2\tau}A_b(r)\nabla_r f_\tau\Big)\\
&f_\tau|_{t=0}=f_0^\tau.
\end{cases}\end{align}
By setting  
\begin{align}\label{constant-meaning}
    \alpha=\dfrac{1}{\zeta C_dC_3^2}>0 \text{ and  }  \gamma=\dfrac{C_2}{\zeta C_dC_3^2}>0,
\end{align}
it is worth making some comments on the parameter $\alpha.$ Notice that form \eqref{constant-meaning}, $\alpha$  become smaller    either when $\zeta$ become larger (\textit{i.e.} the relaxation time of polymer $\beta$ becomes larger than the dominant time-scale $\tau$)  or $C_3$ becomes larger (\textit{i.e.} the intensity of the turbulent flow becomes larger). In both cases, this corresponds to a stronger turbulent flow for the polymers. 
\\

 Therefore, we obtain the following equation for $f_\tau=f_\tau(x,r,t)$:
\begin{align*}
\begin{cases}\partial_tf_\tau&+\Div_x(u_Lf_\tau)+\Div_r(\nabla u_Lrf_\tau)=	\dfrac{1}{\zeta\alpha\tau}\Div_r\Big(\alpha rf_\tau+\gamma\nabla_rf_\tau+\dfrac{1}{2} A_b(r)\nabla_r f_\tau\Big)\\
f_\tau|_{t=0}&=f_0^\tau.
\end{cases}\end{align*}
Without loss of generality set $\gamma=1$. Our aim is  investigate the limit behavior  of $(f_\tau)_\tau$ as $\tau\to 0$ in \eqref{Limit-FP-eqn-2}.
\begin{align}\label{Limit-FP-eqn-2}
\begin{cases}\partial_tf_\tau&+\Div_x(u_Lf_\tau)+\Div_r(\nabla u_Lrf_\tau)=	\dfrac{1}{\zeta\alpha\tau}\Div_r\Big(\alpha rf_\tau+\nabla_rf_\tau+\dfrac{1}{2} A_b(r)\nabla_r f_\tau\Big)\\
f_\tau|_{t=0}&=f_0^\tau.
\end{cases}\end{align}

For clarity of the presentation, it worth  summarizing the various key parameters (see \autoref{table1}) of the manuscript and those related to it from \cite{FlaTah24}. First, we recall that   $C_3>0$  is a dimensionless constant measuring the intensity of the turbulent flow, $C_d=\frac{\pi \log(2)}{8}$ in 2D and  $C_d=\frac{8\pi \log(2)}{15}$ in 3D occurring after taking the scaling limit and depending on the choice of  coefficients   $(\sigma_{k}^{N,\tau})_N$.\\

\begin{table}[h!]
\centering
\begin{tabular}{ |p{8.6cm}||p{3cm}|p{2.5cm}|  }
 \hline
  Parameter & Scaling limit \cite{FlaTah24} ($N\to +\infty$) & Singular limit ($\tau \to 0)$\\
 \hline
 Boltzmann constant $\times$  Temperature  & $kT$    & $kT$\\
 Polymer relaxation time & $\beta$ & $\beta$\\
 Dominant space-scale of small scale   turbulence 
 & $\ell \sim N^{-1}$  & $\ell \sim N^{-1}$ \\
 Dominant time-scale of small scale  turbulence
&   /  & $\tau$\\
 Intensity parameter (turbulence)   & $a$ & $a_\tau=\dfrac{C_3}{\sqrt{\tau}}$  \\
   Constant $\zeta >0$  &   /  & $\beta=\zeta\tau$ \\
  Exponent $\alpha$  (see also \eqref{constant-meaning})&  / & $\alpha=\dfrac{1}{\zeta C_dC_3^2}$\\
 \hline
\end{tabular}
\caption{Key parameters.}
\label{table1}
\end{table}

\section{Main results}\label{Sec-main-results}
Let us introduce the following space
\begin{align*}
      H_\alpha&=\{ g:\mathbb{T}^d\times\mathbb{R}^d\to \mathbb{R}: \quad \int_{\mathbb{T}^d}\int_{\mathbb{R}^d}g^2(x,r)(1+\frac{\vert r\vert^2}{2})^\alpha drdx<\infty\},\quad \alpha > 0.
\end{align*}
\begin{remark}\label{rmq-less1}
   Notice that  $H_\alpha\hookrightarrow H\cap L^1(\mathbb{T}^d\times \mathbb{R}^d)$ if $\alpha>\frac{d}{2}$.
\end{remark}
In the following, we always consider  $f_0^\tau\in H$ for any $\tau >0.$
\subsection{Statement of the main result}
Consider the following assumptions.
\begin{itemize}
    \item[(H$_1$)] There exists $\mathbf{\Lambda}>0$ independent of $\tau$  such that $\displaystyle\sup_{\tau>0}\Vert f^\tau_0 \Vert_{H_\alpha}^2\leq  \mathbf{\Lambda}. $
    \item[(H$_2$)] There exists $f_0\in H_\alpha$ such that      $       \displaystyle\lim_{\tau\to 0}\Vert f_0^\tau-f_0\Vert_{H_\alpha} =0.  $
\end{itemize}
The first result concerns the case $\alpha > \frac{d}{2}.$
     \begin{theorem}\label{MAin-thm-1} Let $ \tau>0$ and $\alpha > \frac{d}{2}, $  assume that  assumption $H_1$   is satisfied. Let $f_\tau$ be the  unique quasi-regular weak solution
 to \eqref{equ-final-I}, then 
 \begin{align}
    \sup_{t\in [0, T]} \Vert f_\tau(t)\Vert_{H_\alpha}^2
    &+\dfrac{1}{\zeta\alpha\tau}\int_0^T\int_{\mathbb{T}^d}\int_{\mathbb{R}^d}\vert \nabla_r \big(f_\tau(s)(1+\frac{\vert r\vert^2}{2})^{\alpha}\big)\vert^2(1+\frac{\vert r\vert^2}{2})^{-\alpha+1} dr dxds\leq  e^{\mathbf{K}T}\mathbf{\Lambda}, 
\end{align}
 where $\mathbf{K}=2\alpha\Vert \nabla_xu_L \Vert_\infty$.  If  moreover  $H_2$  is satisfied,   the following convergence holds as $\tau \to 0$
\begin{align}\label{cv-L2-equilibruim}
   f_\tau \to_\tau \rho \otimes\dfrac{1}{Z}(1+\frac{\vert r\vert^2}{2})^{-\alpha} \text{ in } L^2(0,T;H_\alpha),
\end{align}
 where $\rho$ is the unique solution to \eqref{continuity-eqn-limit} and
 $Z:=\int_{\mathbb{R}^d}(1+\frac{\vert r\vert^2}{2})^{-\alpha}dr=C(d,\alpha)$ is a normalizing factor. In addition, the following rate of convergence holds
\begin{align*}
    &  \int_0^T\hspace{-0.2cm}\int_{\mathbb{T}^d}\hspace{-0.1cm}\int_{\mathbb{R}^d}(f_\tau-\rho \otimes p_\alpha)^2(s)(1+\frac{\vert r\vert^2}{2})^\alpha dr dxds
\leq \mathbf{C}_\alpha(\tau+\dfrac{T}{Z}   \Vert f_0^\tau-f_0\Vert_{H_\alpha}^2),\end{align*}
where   $\mathbf{C}_\alpha$ is a positive constant.

      \end{theorem}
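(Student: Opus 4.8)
The plan is to work with the weighted density $g_\tau := f_\tau (1+\tfrac{|r|^2}{2})^{\alpha}$, for which the natural energy is $\|g_\tau\|_{L^2((1+|r|^2/2)^{-\alpha})}^2 = \|f_\tau\|_{H_\alpha}^2$. First I would derive the a priori estimate: testing \eqref{equ-final-I} against $f_\tau (1+\tfrac{|r|^2}{2})^{2\alpha}$ (equivalently, running the energy identity for $g_\tau$ in the weighted $L^2$ inner product), the transport term $\Div_x(u_L f_\tau)$ contributes nothing after integration by parts because $\Div_x u_L = 0$; the stretching term $\Div_r(\nabla u_L r f_\tau)$ produces a term bounded by $2\alpha\|\nabla_x u_L\|_\infty \|f_\tau\|_{H_\alpha}^2$ after using $\nabla_r(1+\tfrac{|r|^2}{2})^\alpha = \alpha r (1+\tfrac{|r|^2}{2})^{\alpha-1}$ and the algebraic fact that the "bad" quadratic-in-$r$ factor is absorbed by the weight; and the right-hand side $\tfrac{1}{\zeta\alpha\tau}\Div_r(\alpha r f_\tau + \nabla_r f_\tau + \tfrac12 A_b(r)\nabla_r f_\tau)$ must be shown to equal, after the chosen test, $-\tfrac{1}{\zeta\alpha\tau}$ times a sum of nonnegative dissipation terms. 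The crucial computation is that $\alpha r f_\tau + \nabla_r f_\tau = (1+\tfrac{|r|^2}{2})^{-\alpha}\nabla_r g_\tau$, so that the Fokker–Planck part becomes $-\tfrac{1}{\zeta\alpha\tau}\int |\nabla_r g_\tau|^2 (1+\tfrac{|r|^2}{2})^{-\alpha}$ up to a harmless cross term, and similarly the $A_b$-part is nonnegative since $A_b(r)$ is positive semidefinite (indeed $A_b(r)\xi\cdot\xi = (b+1)|r|^2|\xi|^2 - b(r\cdot\xi)^2 \ge |r|^2|\xi|^2 \ge 0$). This is where the precise identity $\mu_\alpha(r) = (1+\tfrac{|r|^2}{2})^{-\alpha}$ being stationary for the operator $\Div_r(\alpha r \cdot + \nabla_r \cdot + \tfrac12 A_b(r)\nabla_r \cdot)$ enters: one checks directly that $\alpha r \mu_\alpha + \nabla_r\mu_\alpha = 0$ and $\nabla_r\mu_\alpha = -\alpha r(1+\tfrac{|r|^2}{2})^{-\alpha-1}$, so the $A_b$ term also vanishes on $\mu_\alpha$ because $A_b(r) r = (b+1)|r|^2 r - b|r|^2 r = |r|^2 r$ — wait, that is not zero, so in fact one uses that the whole flux $\alpha r \mu_\alpha + \nabla_r\mu_\alpha$ already vanishes and the $A_b$ term is multiplied by $\nabla_r\mu_\alpha$, which is parallel to $r$; here the relevant cancellation is that $\Div_r(A_b(r)\nabla_r\mu_\alpha)$ need not vanish pointwise but the combined flux does — this algebraic bookkeeping is the first technical point to get right. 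Grönwall then yields the stated bound with $\mathbf{K} = 2\alpha\|\nabla_x u_L\|_\infty$ and dissipation $\tfrac{1}{\zeta\alpha\tau}\int_0^T\int |\nabla_r g_\tau|^2(1+\tfrac{|r|^2}{2})^{-\alpha}$.

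Next, for the convergence I would introduce $h_\tau := f_\tau - \rho\otimes p_\alpha$ with $p_\alpha := \tfrac1Z(1+\tfrac{|r|^2}{2})^{-\alpha}$ and $\rho$ solving the continuity equation \eqref{continuity-eqn-limit} with datum $\rho_0 = \int_{\mathbb{R}^d} f_0\,dr$. The key observation is that $\rho\otimes p_\alpha$ is an exact solution of the \emph{limit equation} $\partial_t F + \Div_x(u_L F) + \Div_r(\nabla u_L r F) = \tfrac{1}{\zeta\alpha\tau}\Div_r(\alpha r F + \nabla_r F + \tfrac12 A_b(r)\nabla_r F)$ — the right-hand side vanishes identically on $\rho(x,t)p_\alpha(r)$ by the stationarity computation above, and the stretching term $\Div_r(\nabla u_L r\, \rho p_\alpha)$ contributes $\rho\, \Div_r(\nabla u_L r\, p_\alpha)$, which I need to check either vanishes or is controllably small; since $p_\alpha$ is radial and $\nabla u_L$ is trace-free, $\Div_r(\nabla u_L r\, p_\alpha) = (\nabla u_L r)\cdot\nabla_r p_\alpha = -\tfrac\alpha Z (\nabla u_L r\cdot r)(1+\tfrac{|r|^2}{2})^{-\alpha-1}$, which is nonzero pointwise but, crucially, is $O(1)$ (not $O(1/\tau)$) in the weighted norm and thus will be part of the $O(\tau)$ error. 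Then $h_\tau$ satisfies the same equation as $f_\tau$ up to this $O(1)$ source supported in the "non-stiff" part, so repeating the energy estimate for $h_\tau$ gives
\[
\frac{d}{dt}\|h_\tau\|_{H_\alpha}^2 + \frac{c}{\zeta\alpha\tau}\int |\nabla_r(h_\tau(1+\tfrac{|r|^2}{2})^\alpha)|^2(1+\tfrac{|r|^2}{2})^{-\alpha+1}\,dr\,dx \le \mathbf{K}\|h_\tau\|_{H_\alpha}^2 + C\|h_\tau\|_{H_\alpha}.
\]
The decisive point is a weighted Poincaré (spectral gap) inequality in the $r$-variable: for fixed $x$ and $t$, since $\int_{\mathbb{R}^d} h_\tau(x,r,t)\,dr = \int f_\tau\,dr - \rho(x,t) = 0$ (because $\int f_\tau\,dr$ solves the same continuity equation as $\rho$ with the same datum — this needs $\int_{\mathbb{R}^d}\Div_r(\cdots)\,dr = 0$, a boundary-term argument requiring the decay of $f_\tau$), the mean-zero constraint gives
\[
\int_{\mathbb{R}^d} |h_\tau(x,\cdot,t)|^2 (1+\tfrac{|r|^2}{2})^\alpha\,dr \;\le\; C_P \int_{\mathbb{R}^d} |\nabla_r(h_\tau(1+\tfrac{|r|^2}{2})^\alpha)|^2 (1+\tfrac{|r|^2}{2})^{-\alpha+1}\,dr.
\]
Integrating in $x$ and feeding this back into the differential inequality produces a dissipation term $-\tfrac{c}{C_P \zeta\alpha\tau}\|h_\tau\|_{H_\alpha}^2$ that, for $\tau$ small, dominates $\mathbf{K}\|h_\tau\|_{H_\alpha}^2$; Grönwall then forces $\int_0^T\|h_\tau\|_{H_\alpha}^2\,dt = O(\tau) + O(\|f_0^\tau - f_0\|_{H_\alpha}^2)$, i.e. $\mathbf{C}_\alpha(\tau + \tfrac{T}{Z}\|f_0^\tau-f_0\|_{H_\alpha}^2)$, which is exactly the claimed rate, and in particular \eqref{cv-L2-equilibruim}.

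The main obstacle is establishing the weighted Poincaré inequality with a constant $C_P$ independent of $x$, $t$, and $\tau$. This is really the statement that the self-adjoint operator $\mathcal{L}_\alpha\varphi := -\tfrac{1}{\mu_\alpha}\Div_r(\mu_\alpha \nabla_r\varphi + \tfrac12 A_b(r)\mu_\alpha\nabla_r\varphi)$ on $L^2(\mathbb{R}^d;\mu_\alpha\,dr)$ has a spectral gap above its kernel (the constants) — equivalently, the Cauchy-type measure $\mu_\alpha$ with the degenerate-at-infinity diffusion $I + \tfrac12 A_b$ satisfies a Poincaré inequality. Since $A_b(r) \sim |r|^2$ grows, the effective diffusion coefficient \emph{improves} the inequality at infinity rather than degrading it, so I expect the gap to hold for all $\alpha>0$ actually, with the restriction $\alpha>d/2$ entering only through integrability of $\mu_\alpha$ (ensuring $p_\alpha$ is a probability density and $Z<\infty$); I would prove $C_P < \infty$ either by a direct weighted Hardy-inequality argument exploiting the $|r|^2$ growth of $A_b$, or by citing a suitable Lyapunov-function criterion (Bakry–Émery / Poincaré via Lyapunov conditions). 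A secondary technical point is justifying all the integrations by parts in $r$ at the level of the quasi-regular weak solutions from \eqref{Thm-scaling-N}, i.e. that the $\nabla_r f_\tau \in L^2(0,T;H)$ regularity plus the weight control is enough to discard boundary terms at $|r| = \infty$; this I would handle by a standard truncation-and-pass-to-the-limit argument using the a priori estimate, exactly as in the well-posedness theory of \cite{FlaTah24}.
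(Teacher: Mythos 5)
Your first paragraph (the uniform estimate) follows essentially the paper's route, but be careful with the algebra you call ``crucial'': the identity $\alpha r f_\tau+\nabla_r f_\tau=(1+\tfrac{|r|^2}{2})^{-\alpha}\nabla_r g_\tau$ is false, and so is $\alpha r\mu_\alpha+\nabla_r\mu_\alpha=0$. What is true (and what the paper uses, see \eqref{ground-state}) is that only the \emph{full} flux factorizes:
\begin{align*}
\alpha r f+\nabla_r f+\tfrac12 A_b(r)\nabla_r f=(1+\tfrac{|r|^2}{2})^{-\alpha}\bigl[\nabla_r g_\alpha+\tfrac12 A_b(r)\nabla_r g_\alpha\bigr],\qquad g_\alpha=f(1+\tfrac{|r|^2}{2})^{\alpha},
\end{align*}
because $A_b(r)r=|r|^2 r$ exactly compensates the mismatch between $\alpha rP_\alpha$ and $-\nabla_rP_\alpha$. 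You half-notice this, and your final dissipation (with weight $(1+\tfrac{|r|^2}{2})^{1-\alpha}$, coming from the lower bound $A_b(r)\xi\cdot\xi\ge|r|^2|\xi|^2$) is the correct one, but as written the step that makes the singular term sign-definite is not justified.

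The genuine gap is in the convergence argument: you assert $\int_{\mathbb{R}^d}h_\tau(x,r,t)\,dr=0$ because ``$\int f_\tau\,dr$ solves the same continuity equation as $\rho$ with the same datum.'' It does not have the same datum: $\rho_\tau=\int f_\tau\,dr$ is transported from $\rho_0^\tau=\int f_0^\tau\,dr$, while $\rho$ is transported from $\rho_0=\int f_0\,dr$, and under (H$_2$) these only converge, they are not equal. Hence $\int h_\tau\,dr=\rho_\tau-\rho\neq0$ in general, and your weighted Poincar\'e step, which needs the mean-zero constraint, does not apply as stated; moreover your Gr\"onwall-with-strong-dissipation scheme would then (incorrectly) yield an $O(\tau)$ rate with no trace of the initial-data error. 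The correct repair is exactly what the paper does: apply the weighted Poincar\'e inequality to $h_\alpha=(f_\tau-\rho\otimes p_\alpha)(1+\tfrac{|r|^2}{2})^{\alpha}$ keeping the mean term, which equals $\tfrac1Z(\rho_\tau-\rho)$, prove separately (from the estimate \eqref{cv-inequality}, where the stretching source is absorbed via Young's inequality and the dissipation bound into an $O(\sqrt\tau)$ contribution) that the dissipation integral is $O(\tau)$, and control $\|\rho_\tau-\rho\|_{L^2}$ by the $L^2$-stability of the transport equation together with $\|\rho_0^\tau-\rho_0\|_{L^2}\le \sqrt{Z}\,\|f_0^\tau-f_0\|_{H_\alpha}$. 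That mean contribution is precisely the origin of the $\tfrac{T}{Z}\|f_0^\tau-f_0\|_{H_\alpha}^2$ term in the stated rate, which your argument cannot produce without this correction.
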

      \begin{proof}
  It is a consequence of    \autoref{propo-estimates},  \autoref{prop-cv-1} and \autoref{Lemma-uniq-cont}.
      \end{proof}
     
          Note that  \eqref{cv-L2-equilibruim}, in \autoref{MAin-thm-1}, asserts convergence  in the space  $L^2(0,T;H_\alpha)$ with rate $O(\tau)$ if  $\Vert f_0^\tau-f_0\Vert_{H_\alpha}^2=O(\tau).$ This convergence is the consequence of the combination of the inequality \eqref{cv-inequality} and  the weighted Poincaré inequality \eqref{ineq-B-2}.  Additionally, from \eqref{cv-inequality} it follows
\begin{align}
    &\sup_{t\in [0, T]}\int_{\mathbb{T}^d}\int_{\mathbb{R}^d}\vert f_\tau(t)-\rho(t)\otimes p_\alpha\vert^2(1+\frac{\vert r\vert^2}{2})^\alpha  drdx\leq  \int_{\mathbb{T}^d}\int_{\mathbb{R}^d}\vert f_0^\tau-\rho_0\otimes p_\alpha\vert ^2(1+\frac{\vert r\vert^2}{2})^\alpha  drdx+\widetilde{\mathbf{C}}\sqrt{\tau},\notag
\end{align}
which  asserts convergence  in the space  $L^\infty(0,T;H_\alpha)$ with rate $O(\sqrt{\tau})$ if $\Vert f_0^\tau-\rho_0\otimes p_\alpha\Vert_{H_\alpha}^2=O(\sqrt{\tau}).$  However, it worth noticing that the convergence in $L^\infty(0,T;H_\alpha)$ requires a stronger assumption of convergence of the initial data, namely  it requires $f_0$ to have the form $\rho_0\otimes p_\alpha.$
      \begin{remark}
         It is worth mentioning that the same conclusion of \autoref{MAin-thm-1} holds if $H_2$ is replaced by  there exists $f_0\in L^2(\T^d;L^1(\R^d))$ such that      $       \displaystyle\lim_{\tau\to 0}\Vert f_0^\tau-f_0\Vert_{L^2_xL^1_r} =0.  $
      \end{remark}
     
 In the following, $\mathcal{M}(\mathbb{T}^d\times \mathbb{R}^d)$ denotes the space of Radon measures. 
      Concerning the case  $0<\alpha \leq  \frac{d}{2}, $  we have
 \begin{theorem}\label{Main-thm-2}
      Let $ \tau>0$ and $0<\alpha \leq  \frac{d}{2}.$ We  distinguish two cases:
      \begin{enumerate}
          \item[i)]   Assume that   $H_1$   is satisfied and  let $f_\tau$ be the  unique quasi-regular weak solution to \eqref{equ-final-I}. Then 
 \begin{align}
    \sup_{t\in [0, T]} \Vert f_\tau(t)\Vert_{H_\alpha}^2
    &+\dfrac{1}{\zeta\alpha\tau}\int_0^T\int_{\mathbb{T}^d}\int_{\mathbb{R}^d}\vert \nabla_r \big(f_\tau(s)(1+\frac{\vert r\vert^2}{2})^{\alpha}\big)\vert^2(1+\frac{\vert r\vert^2}{2})^{-\alpha+1} dr dxds\leq  e^{\mathbf{K}T}\mathbf{\Lambda}. 
\end{align}
Moreover,  there exists a subsequence $(f_{\eta_k })_k$ of $(f_\tau)_\tau$ such that the following convergence holds   
\begin{align}
    f_{\eta_k } \stackrel{\ast}{\rightharpoonup} 0 \text{ in } L^\infty(0,T;H_\alpha) \text{ as } k \to +\infty.
\end{align}
\item[ii)] Let  $f^\tau_0 \in L^1(\mathbb{T}^d\times \mathbb{R}^d) \cap H$ and $f_\tau$ be the  unique quasi-regular weak solution
 to \eqref{equ-final-I}. Assume the existence of   $\mathbf{\Lambda}_2>0$ independent of $\tau$  such that $\displaystyle\sup_{\tau>0}\Vert f^\tau_0 \Vert_{L^1}\leq  \mathbf{\Lambda}_2. $ Then
\begin{align}\label{bound-1-sec1}
    \sup_{t\in[0,T]}  \int_{\mathbb{T}^d}\int_{\mathbb{R}^d}\vert f_\tau(t)\vert  drdx \leq \int_{\mathbb{T}^d}\int_{\mathbb{R}^d}\vert f_0^\tau\vert drdx \leq \mathbf{\Lambda}_2.
\end{align}
Moreover, there exists a subsequence  $(f_{\tau_k } )_k$ of $(f_\tau)_\tau$   and  $\nu\in L^\infty(0,T;\mathcal{M}(\mathbb{T}^d\times \mathbb{R}^d))$ such that 
\begin{align}
     f_{\tau_k } \stackrel{\ast}{\rightharpoonup} \nu  \text{ in } L^\infty(0,T;\mathcal{M}(\mathbb{T}^d\times \mathbb{R}^d)) \text{ as } k \to +\infty,
\end{align}
and $\nu$ solves, in the sense of distributions \footnote{It is the dual space of $C^\infty_c(]0,T[\times \T^d\times \R^d)$, denoted by $\mathcal{D}^\prime$., the following
\begin{align}\label{eqn-radon-thm}
    \Div_r\big(\alpha r\nu+\nabla_r \nu+\dfrac{1}{2}A_b(r)\nabla_r \nu\big)=0 \text{ in } \mathcal{D}^\prime.
\end{align}}
 \end{enumerate}
    
 \end{theorem}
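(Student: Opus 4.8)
The starting point is the weighted energy estimate. Testing \eqref{equ-final-I} against $f_\tau(1+\tfrac{|r|^2}{2})^{\alpha}$ and integrating by parts in $r$, the transport terms in $x$ vanish (since $\Div_x u_L=0$) while the stretching term $\Div_r(\nabla u_L r\, f_\tau)$ produces a contribution controlled by $2\alpha\|\nabla_x u_L\|_\infty\|f_\tau(t)\|_{H_\alpha}^2$; the dissipative term on the right-hand side is where the crucial algebra happens: rewriting $\alpha r f_\tau+\nabla_r f_\tau+\tfrac12 A_b(r)\nabla_r f_\tau$ in terms of the new unknown $g_\tau:=f_\tau(1+\tfrac{|r|^2}{2})^{\alpha}$ one checks that $\alpha r f_\tau+\nabla_r f_\tau = (1+\tfrac{|r|^2}{2})^{-\alpha}\nabla_r g_\tau$, and similarly $\tfrac12 A_b(r)\nabla_r f_\tau$ combines with $\tfrac12 A_b(r) r$ (using that $A_b(r)r = (b+1)|r|^2 r - b|r|^2 r = |r|^2 r$ is parallel to $r$) to give a nonnegative quadratic form in $\nabla_r g_\tau$ weighted by $(1+\tfrac{|r|^2}{2})^{-\alpha+1}$. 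Hence Grönwall's lemma gives the stated a priori bound with $\mathbf{K}=2\alpha\|\nabla_x u_L\|_\infty$. This is the part of the argument common to both \autoref{MAin-thm-1} and \autoref{Main-thm-2}; I would refer to \autoref{propo-estimates} for the details and concentrate on the new weak-$*$ compactness.

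\emph{Case (i).} From the energy estimate, $(f_\tau)_\tau$ is bounded in $L^\infty(0,T;H_\alpha)$. Since $H_\alpha=L^2\big(\T^d\times\R^d;(1+\tfrac{|r|^2}{2})^\alpha\,drdx\big)$ is a (separable) Hilbert space, $L^\infty(0,T;H_\alpha)\cong (L^1(0,T;H_\alpha))'$, so along a subsequence $f_{\eta_k}\stackrel{*}{\rightharpoonup} f_*$ in $L^\infty(0,T;H_\alpha)$. To identify $f_*=0$ I would exploit the second (dissipative) term of the estimate: it forces $\tfrac{1}{\zeta\alpha\tau}\int_0^T\|\nabla_r g_\tau\|^2_{L^2((1+|r|^2/2)^{-\alpha+1})}\le e^{\mathbf{K}T}\mathbf{\Lambda}$, hence $\nabla_r g_\tau\to 0$ strongly in $L^2(0,T;L^2(\T^d\times\R^d;(1+\tfrac{|r|^2}{2})^{-\alpha+1}))$ as $\tau\to0$. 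Thus the weak-$*$ limit $f_*$ must, after the change of variable, be $r$-independent: $f_*(x,r,t)=h(x,t)$ for some $h$. But for $0<\alpha\le\tfrac d2$ the constant-in-$r$ function $h\not\equiv0$ does not belong to $H_\alpha$ unless $h\equiv0$, because $\int_{\R^d}(1+\tfrac{|r|^2}{2})^\alpha\,dr=+\infty$. (Here one must be slightly careful: $f_*\in L^\infty(0,T;H_\alpha)$, and the pointwise limit statement $\nabla_r g_*=0$ has to be justified by passing to the limit in the weak formulation against test functions of the form $\varphi(x)\psi(r)(1+\tfrac{|r|^2}{2})^\alpha$; the cleanest route is to show $g_\tau:=f_\tau(1+\tfrac{|r|^2}{2})^\alpha\stackrel{*}{\rightharpoonup} g_*$ in $L^\infty(0,T;L^2((1+\tfrac{|r|^2}{2})^{-\alpha}))$ with $\nabla_r g_*=0$ in $\mathcal{D}'$, whence $g_*$ is $r$-independent and square-integrable against $(1+\tfrac{|r|^2}{2})^{-\alpha}dr$, which is a finite measure in $r$, while simultaneously $f_*=g_*(1+\tfrac{|r|^2}{2})^{-\alpha}\in L^\infty(0,T;H_\alpha)$ forces $g_*$ to be square-integrable against $(1+\tfrac{|r|^2}{2})^{+\alpha}dr$; for $0<\alpha\le d/2$ these two integrability conditions on an $r$-independent function are compatible only for the zero function.) This yields $f_*=0$.

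\emph{Case (ii).} Now test against $\mathrm{sgn}(f_\tau)$ (rigorously, against a smooth approximation $\mathrm{sgn}_\varepsilon$ and pass $\varepsilon\to0$, using the regularity of the quasi-regular weak solution): the transport terms again integrate to zero and the dissipative term has the favourable sign, giving the $L^1$ contraction \eqref{bound-1-sec1}. Since $L^1(\T^d\times\R^d)\hookrightarrow \mathcal M(\T^d\times\R^d)=(C_0(\T^d\times\R^d))'$ and the latter predual is separable, $(f_\tau)_\tau$ is bounded in $L^\infty(0,T;\mathcal M)\cong(L^1(0,T;C_0(\T^d\times\R^d)))'$, so along a subsequence $f_{\tau_k}\stackrel{*}{\rightharpoonup}\nu$. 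To obtain \eqref{eqn-radon-thm}, multiply \eqref{equ-final-I} by $\zeta\alpha\tau_k$ and take $\tau_k\to0$ in the weak formulation against $\phi\in C_c^\infty(]0,T[\times\T^d\times\R^d)$: the left-hand side terms $\zeta\alpha\tau_k(\partial_t f_{\tau_k}+\Div_x(u_L f_{\tau_k})+\Div_r(\nabla u_L r f_{\tau_k}))$ are each $O(\tau_k)$ times a quantity bounded uniformly (using \eqref{bound-1-sec1} and that $\phi,\nabla\phi$ have compact support, so the polynomial weights $r$ and $A_b(r)$ are bounded on the support), hence vanish; the right-hand side converges to $\langle\nu,\Div_r(\alpha r\nabla_r\phi+\ldots)\rangle$ — more precisely to the distributional pairing $\langle\Div_r(\alpha r\nu+\nabla_r\nu+\tfrac12 A_b(r)\nu'),\phi\rangle$ after the appropriate integrations by parts in the weak sense — giving \eqref{eqn-radon-thm}. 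Finally the structure $\nu=g\otimes\mu_\alpha$ follows from solving the stationary $r$-ODE \eqref{eqn-radon-thm}: the operator $L\mu:=\Div_r(\alpha r\mu+\nabla_r\mu+\tfrac12 A_b(r)\nabla_r\mu)$ annihilates exactly (up to the $x$-dependent prefactor) the density $(1+\tfrac{|r|^2}{2})^{-\alpha}$, which one verifies by the same identity $\alpha r p_\alpha+\nabla_r p_\alpha=0$ and $A_b(r)\nabla_r p_\alpha = A_b(r)(-\alpha r p_\alpha(1+\tfrac{|r|^2}{2})^{-1})= -\alpha |r|^2 r\,p_\alpha(1+\tfrac{|r|^2}{2})^{-1}$ being parallel to $r$, so that $\Div_r$ of the total flux with this choice is zero; a uniqueness argument for the $r$-elliptic problem (or invoking \autoref{lemma-appendix}) pins the $r$-profile down to a scalar multiple of $\mu_\alpha$.

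\emph{Main obstacle.} The delicate point is the limit identification, not the compactness. In case (i) one must rule out a nonzero $r$-independent limit while staying inside the weighted space $H_\alpha$ — the argument hinges on the incompatibility, for $0<\alpha\le d/2$, of the two weighted-$L^2$ integrability conditions that the limit would have to satisfy, and care is needed because the dissipative bound only controls $\nabla_r g_\tau$ with the weight $(1+\tfrac{|r|^2}{2})^{-\alpha+1}$, which degenerates at infinity. In case (ii) the subtlety is that the limit measure $\nu$ may charge infinity in the $r$-variable (mass can escape), so \eqref{eqn-radon-thm} must be read in $\mathcal D'$ on the open set, and the factorization $\nu=g\otimes\mu_\alpha$ requires the solvability/uniqueness statement of \autoref{lemma-appendix} rather than a naive ODE integration.
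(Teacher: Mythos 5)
Your overall route is the same as the paper's (uniform weighted estimate from \autoref{propo-estimates}, then in case (i) weak-$*$ compactness in $L^\infty(0,T;H_\alpha)$ plus the vanishing of the dissipation to force an $r$-independent profile that cannot lie in $H_\alpha$, and in case (ii) an $L^1$ bound via a smoothed sign, weak-$*$ compactness in measures, and multiplication of the weak formulation by $\zeta\alpha\tau_k$; compare \autoref{prop-cv-to-0} and \autoref{prop-est-L1}). However, there are concrete errors in the way you set it up. First, the key algebra is misquoted twice: you write $\alpha r f_\tau+\nabla_r f_\tau=(1+\frac{\vert r\vert^2}{2})^{-\alpha}\nabla_r g_\tau$ and later invoke ``the same identity $\alpha r p_\alpha+\nabla_r p_\alpha=0$''. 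Both are false: since $\nabla_r P_\alpha=-\alpha r(1+\frac{\vert r\vert^2}{2})^{-1}P_\alpha$, one has $\alpha rP_\alpha+\nabla_rP_\alpha=\alpha rP_\alpha\frac{\vert r\vert^2/2}{1+\vert r\vert^2/2}\neq 0$; only the \emph{full} flux $\alpha rP_\alpha+\nabla_rP_\alpha+\frac12A_b(r)\nabla_rP_\alpha$ vanishes, because $(I+\frac12A_b(r))r=(1+\frac{\vert r\vert^2}{2})r$ (this is exactly \eqref{ground-state}), and correspondingly $\alpha rf+\nabla_rf+\frac12A_b(r)\nabla_rf=(1+\frac{\vert r\vert^2}{2})^{-\alpha}(I+\frac12A_b(r))\nabla_r g$. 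With your splitting neither the energy estimate nor your check that $\mu_\alpha$ is stationary closes (the latter is beyond the statement, and the uniqueness you want there is \autoref{prop-non-uniq-mesure}, not \autoref{lemma-appendix}, which is only the $\tau$-dependent $L^2$ bound).

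Second, in case (ii) the assertion that after testing with $\mathrm{sgn}_\varepsilon(f_\tau)$ ``the dissipative term has the favourable sign, giving the $L^1$ contraction'' skips the only delicate step: the drift part of the singular operator produces $-\frac{1}{\zeta\tau}\int f_\tau\, r\cdot\nabla_r f_\tau\,\eta_\varepsilon'(f_\tau)$, which is of size $1/\tau$ and has no sign. In the paper this is absorbed by Young's inequality into the $A_b$-diffusion, and the leftover $\frac{\alpha}{2\zeta\tau}\int f_\tau^2\eta_\varepsilon'(f_\tau)\le\frac{\alpha\pi}{4\zeta\tau}\int\vert f_\tau\vert\mathbf{1}_{\{\vert f_\tau\vert\le\varepsilon\}}$ is removed only after a preliminary ($\tau$-dependent) Gronwall argument guarantees $f_\tau\in L^1$ so that dominated convergence applies as $\varepsilon\to0$; without this you do not obtain \eqref{bound-1-sec1}. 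Finally, in case (i) your bookkeeping of weights is off: $(1+\frac{\vert r\vert^2}{2})^{-\alpha}dr$ is \emph{not} a finite measure when $0<\alpha\le\frac d2$ -- its divergence is precisely the mechanism -- and $f_*\in L^\infty(0,T;H_\alpha)$ forces $g_*$ to be square-integrable against $(1+\frac{\vert r\vert^2}{2})^{-\alpha}drdx$, not against the $+\alpha$ weight; with the corrected weights your argument reduces to the paper's: the limit has the form $h_{t,x}\otimes(1+\frac{\vert r\vert^2}{2})^{-\alpha}$, and membership in $H_\alpha$ forces $h\equiv0$ because $\int_{\mathbb{R}^d}(1+\frac{\vert r\vert^2}{2})^{-\alpha}dr=+\infty$.
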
 
  \begin{proof}
  It is a consequence of    \autoref{propo-estimates},  \autoref{prop-cv-to-0} and \autoref{prop-est-L1}.   \end{proof}
      \begin{remark}\label{Rmq-radon-stretching}
          Since   the differential operator acts only with respect to $r$-variable, a particular solution are of the form $\nu=\mathbf{g}_x(\cdot)\otimes \mu_r,$ where $\mathbf{g}_x$ is an element of $L^\infty(0,T;\mathcal{M}(\mathbb{T}^d))$ and $\mu_r \in \mathcal{M}(\mathbb{R}^d)$ solving
    \begin{align}\label{1stsolution-measure}
    \Div_r\big(\alpha r\mu_r+\nabla_r \mu_r+\dfrac{1}{2}A_b(r)\nabla_r \mu_r \big)=0 \text{ in } (C_c^\infty(\mathbb{R}^d))^\prime.
\end{align}
On the other hand, $P_\alpha(r)=(1+\frac{\vert r\vert^2}{2})^{-\alpha} \in C^\infty_b(\mathbb{R}^d)$ and  $\alpha rP_\alpha(r)+\nabla_r P_\alpha(r)+\dfrac{1}{2}A_b(r)P_\alpha(r)=0.$
Define the following Radon measure $\mu_\alpha$ 
\begin{align}\label{explicit-Radon-sol}
     \langle  \mu_\alpha,\varphi\rangle_{\mathcal{M},C_c}:=\int_{\mathbb{R}^d}  P_\alpha(r)\varphi(r)dr, \quad \forall \varphi\in C_c(\mathbb{R}^d).
\end{align}
Indeed, 
let $\varphi\in C_c(\mathbb{R}^d)$, one gets
\begin{align*}
   \vert  \langle P_\alpha,\varphi\rangle_{\mathcal{M},C_c}\vert =\vert \int_{\mathbb{R}^d}  P_\alpha(r)\varphi(r)dr \vert \leq  \Vert \varphi\Vert_{C_c}\int_{\text{supp}(\varphi)}P_\alpha(r)dr\leq C\Vert \varphi\Vert_{C_c}
\end{align*}
for some positive constant $C.$ Consequently, $\mathbf{g}_x(\cdot)\otimes \mu_\alpha \in L^\infty(0,T;\mathcal{M}(\mathbb{T}^d\times \mathbb{R}^d)) $ solves \eqref{eqn-radon-thm}.
      \end{remark}
     \subsubsection{Non-uniqueness of measure-valued solutions  and selection principle ($0<\alpha\leq \dfrac{d}{2}$)\label{Subsection-discussion-uniq}}
            Since   the differential operator in \eqref{eqn-radon-thm} acts only with respect to $r$-variable,
            the solutions has the form  
            \begin{align}\label{solution-form.gle}
                \nu=\mathfrak{g}(t,x)\otimes \varrho_r
            \end{align}
             where $\mathfrak{g}$ is any element of $L^\infty(0,T;\mathcal{M}(\mathbb{T}^d))$ and 
   $\varrho_r \in \mathcal{M}(\mathbb{R}^d)$  is the solution of 
\begin{align}\label{eqn-radon-thm-**}
    \Div_r\big(\alpha r\varrho_r+\nabla_r \varrho_r+\dfrac{1}{2}A_b(r)\nabla_r \varrho_r\big)=0 \text{ in } \mathcal{D}^\prime (\mathbb{R}^d).
\end{align}

$\bullet$ \textbf{(Non-uniqueness of $\nu$)}
Notice that at this level, the uniqueness fails by choosing $\mathfrak{g}_1,\mathfrak{g}_2\in L^\infty(0,T;\mathcal{M}(\mathbb{T}^d))$ such that $\mathfrak{g}_1\neq \mathfrak{g}_2$ and  notice from \autoref{Rmq-radon-stretching} that  $\mathfrak{g}_1(t,x)\otimes \mu_r$ and $\mathfrak{g}_2(t,x)\otimes \mu_r$ are two distinct solutions to \eqref{eqn-radon-thm}.\\


$\bullet$  \textbf{(The uniqueness of $\varrho_r$)} Let us discuss the uniqueness for the solution  to  \eqref{eqn-radon-thm-**}. Namely, $\varrho_r$ satisfies (see \eqref{passge-limit})
\begin{align}\label{sense-RAdon--}
    \langle \varrho_r,  -\alpha r\cdot\nabla_r\varphi+ \Delta_r\varphi  +\dfrac{1}{2} \Div_r(A_b(r)\nabla_r\varphi)\rangle_{\mathcal{M},C_c} ds=0 \text{ for any } \varphi \in C^2_c(\mathbb{R}^d).
\end{align}
By using    Lebesgue decomposition theorem of Radon measures (cf. \textit{e.g.} \cite[Theorem 6.10]{Rudin}),  there exists a unique pair $(\varrho_{ac}^r,\varrho_s^r)$   of  measures such that 
$   \varrho_r=\varrho_s^r+\varrho_{ac}^r$
where
\begin{itemize}
    \item  $\varrho_{ac}^r<< \lambda_d$
 (absolutely continuous with respect to Lebesgue measure on $\mathbb{R}^d$). Moreover, $\varrho_{ac}=h(r)dr$ with $h\in L^1_{loc}(\mathbb{R}^d).$
 \item  $\varrho_s^r  \perp \lambda_d$  (singular with respect to Lebesgue measure) and may include Dirac measures. In other words, concentrations are authorized in the space $\mathcal{M}(\mathbb{R}^d).$
\end{itemize}
As we already indicated  in \autoref{Rmq-radon-stretching} that $\mu_r(dr)= (1+\frac{\vert r\vert^2}{2})^{-\alpha} dr \in \mathcal{M}(\mathbb{R}^d)$ solves \eqref{1stsolution-measure}.
In accordance with  Lebesgue decomposition theorem,  $\mu_r$ belongs to $\varrho_{ac}^r$ \textit{i.e.} 
\begin{align}\label{decompostion-our-case}
  \varrho_{ac}^r=\mu_r+C\mu_{r,2}, C\geq 0 \text{ where } \mu_{r,2} \text{ is another  absolutely continuous  component}.  
\end{align}
In general, it is difficult to explicitly compute all the solution to \eqref{eqn-radon-thm-**} but if uniqueness holds, then it is necessary to get $\varrho_r=\mu_r$,  since \eqref{eqn-radon-thm-**} is a linear equation. This is the goal of the next proposition. 
\begin{proposition}\label{prop-non-uniq-mesure}
    There exists a unique Radon measure solution to \eqref{eqn-radon-thm-**}, in the sense \eqref{sense-RAdon--}.
\end{proposition}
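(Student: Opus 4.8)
The plan is to exploit the explicit positive solution $P_\alpha(r)=(1+\tfrac{|r|^2}{2})^{-\alpha}$ of \autoref{Rmq-radon-stretching} through a ground-state (Doob) transform, which turns \eqref{eqn-radon-thm-**} into a \emph{symmetric} divergence-form equation, and then to establish a Liouville-type rigidity for the transformed unknown. Set $M(r):=I+\tfrac12 A_b(r)$, which is symmetric with $M(r)\ge I$ (because $A_b(r)\ge 0$), and recall from \autoref{Rmq-radon-stretching} the identity $\alpha r P_\alpha+M\nabla_r P_\alpha=0$. This identity gives, for every $\varphi\in C^2_c(\R^d)$, the pointwise relation $-\alpha r\cdot\nabla_r\varphi+\Delta_r\varphi+\tfrac12\Div_r(A_b\nabla_r\varphi)=P_\alpha^{-1}\,\Div_r(P_\alpha M\nabla_r\varphi)$. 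Given a Radon measure $\varrho_r$ solving \eqref{sense-RAdon--}, write $\varrho_r=P_\alpha\,\sigma$ with $\sigma:=P_\alpha^{-1}\varrho_r$ (legitimate since $P_\alpha\in C^\infty$ is bounded and positive). Substituting the displayed relation, \eqref{sense-RAdon--} becomes $\langle\sigma,\Div_r(P_\alpha M\nabla_r\varphi)\rangle_{\mathcal M,C_c}=0$ for all $\varphi\in C^2_c(\R^d)$; since $P_\alpha M$ is symmetric, the operator $\sigma\mapsto\Div_r(P_\alpha M\nabla_r\sigma)$ is its own formal adjoint, so this says $\Div_r(P_\alpha M\nabla_r\sigma)=0$ in $\mathcal D'(\R^d)$. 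Hence it suffices to show $\sigma$ is constant, equivalently $\varrho_r$ is a constant multiple of $\mu_\alpha$ of \eqref{explicit-Radon-sol}; as \eqref{eqn-radon-thm-**} is linear, this is exactly the claimed uniqueness, the trivial multiplicative normalization being absorbed into $\mathfrak g$ in \eqref{solution-form.gle}.

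The first step is regularity. The matrix field $P_\alpha M$ is smooth and locally uniformly elliptic ($P_\alpha>0$, $M\ge I$), so $\Div_r(P_\alpha M\nabla_r\cdot)$ is hypoelliptic; therefore $\sigma\in C^\infty(\R^d)$, and $\varrho_r=P_\alpha\sigma$ is an absolutely continuous measure with smooth density. In particular the Lebesgue decomposition discussed before the statement has trivial singular part, which already settles the concern raised there about concentrations. Moreover, since the relevant $\varrho_r$ arises (as in \autoref{Main-thm-2}(ii)) as a weak-$*$ limit of the nonnegative densities $f_\tau$ with uniformly bounded mass, it is a nonnegative finite measure, so $\sigma\ge0$ and $\int_{\R^d}\sigma(r)(1+\tfrac{|r|^2}{2})^{-\alpha}\,dr<\infty$.

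The second step is the Liouville rigidity. Pick cut-offs $\chi_R\in C^\infty_c(\R^d)$ with $\chi_R\equiv1$ on $B_R$, $\operatorname{supp}\chi_R\subset B_{2R}$, $|\nabla_r\chi_R|\lesssim R^{-1}$; test the (now classical) equation against $\sigma\chi_R^2\in C^\infty_c$ and use Cauchy--Schwarz for the nonnegative bilinear form $P_\alpha M$ to obtain $\int\chi_R^2\,P_\alpha M\nabla_r\sigma\cdot\nabla_r\sigma\lesssim\int_{B_{2R}\setminus B_R}\sigma^2\,P_\alpha M\nabla_r\chi_R\cdot\nabla_r\chi_R$. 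On the annulus $P_\alpha\lesssim|r|^{-2\alpha}$ and $\|M\|\lesssim|r|^2$, so the right-hand side is $\lesssim R^{-2\alpha}\int_{B_{2R}\setminus B_R}\sigma^2$. To bound $\int_{B_{2R}\setminus B_R}\sigma^2$ I will use a scale-invariant Harnack inequality on dyadic annuli: after $r\mapsto R\rho$ the equation on $\{R\le|r|\le2R\}$ becomes $\Div_\rho(a_R(\rho)\nabla_\rho\widetilde\sigma)=0$ on $\{1\le|\rho|\le2\}$ with $a_R(\rho)=P_\alpha(R\rho)M(R\rho)$, and $R^{2\alpha-2}a_R(\rho)\to 2^{\alpha-1}|\rho|^{-2\alpha}A_b(\rho)$ uniformly on $\{1\le|\rho|\le2\}$, a smooth uniformly elliptic limit (the scalar prefactor $R^{2-2\alpha}$ is irrelevant for the equation); hence De Giorgi--Nash--Moser yields a Harnack constant independent of $R$ for $R$ large, so $\sup_{B_{2R}\setminus B_R}\sigma\lesssim R^{-d}\int_{B_{3R}\setminus B_{R/2}}\sigma$. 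Since $(1+\tfrac{|r|^2}{2})^{-\alpha}\gtrsim R^{-2\alpha}$ on that annulus, $\int_{B_{3R}\setminus B_{R/2}}\sigma\lesssim R^{2\alpha}\,\varepsilon(R)$ where $\varepsilon(R):=\int_{\{|r|>R/2\}}\sigma(1+\tfrac{|r|^2}{2})^{-\alpha}\,dr\to0$ as the tail of a convergent integral. Combining, $\int_{B_{2R}\setminus B_R}\sigma^2\le(\sup_{B_{2R}\setminus B_R}\sigma)\int_{B_{2R}\setminus B_R}\sigma\lesssim R^{-d}R^{4\alpha}\varepsilon(R)^2$, so $R^{-2\alpha}\int_{B_{2R}\setminus B_R}\sigma^2\lesssim R^{2\alpha-d}\varepsilon(R)^2$. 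Here the standing assumption $\alpha\le d/2$ enters decisively: $R^{2\alpha-d}\le1$ for $R\ge1$, so this tends to $0$ as $R\to\infty$. Letting $R\to\infty$ and using monotone convergence gives $\int_{\R^d}P_\alpha M\nabla_r\sigma\cdot\nabla_r\sigma=0$; since $P_\alpha M\ge P_\alpha I>0$ pointwise, $\nabla_r\sigma\equiv0$, so $\sigma$ is constant and $\varrho_r$ is a constant multiple of $\mu_\alpha$.

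The main obstacle is the behaviour at infinity: the diffusion $P_\alpha M$ is not globally uniformly elliptic---its ellipticity ratio grows like $|r|^2$---so no off-the-shelf Liouville theorem applies, and indeed uniqueness for \eqref{eqn-radon-thm-**} genuinely fails in the class of all (signed, or non-finite) Radon measures, higher angular modes producing solutions that grow polynomially. The argument must therefore exploit both the sign and the finiteness of $\varrho_r$ inherited from \autoref{Main-thm-2}(ii), and the borderline range $0<\alpha\le d/2$ is precisely what makes the weighted cut-off estimate close. The cleanest self-contained route to the uniform Harnack bound is the rescaling argument above; an alternative is to decompose $\sigma$ into spherical harmonics---each mode solving an explicit second-order linear ODE whose solution regular at the origin is either trivial or grows polynomially at infinity---and to use $\sigma\ge0$ to kill every mode of degree $\ge1$, leaving the radial mode, which the ODE forces to be constant.
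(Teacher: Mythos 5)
Your route is genuinely different from the paper's. The paper argues by duality: it takes the difference of two solutions, solves the adjoint problem $\mathcal{L}^*\varphi=\psi$ on a large ball with Dirichlet data (Fredholm alternative plus elliptic regularity), and tests the difference against the extension of $\varphi$. You instead perform a ground-state transform $\varrho_r=P_\alpha\sigma$, observe that \eqref{sense-RAdon--} becomes the symmetric equation $\Div_r\big(P_\alpha(I+\tfrac12A_b)\nabla_r\sigma\big)=0$, upgrade $\sigma$ to a smooth function by hypoellipticity, and run a Caccioppoli--Harnack Liouville argument. The transform identity, the regularity step and the rescaling giving a Harnack constant uniform in $R$ are correct and form a nice self-contained scheme. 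But as a proof of the proposition there is a genuine gap: the statement (and the paper's own proof, which works with the \emph{difference} of two solutions) concerns arbitrary, hence signed, Radon measure solutions of \eqref{eqn-radon-thm-**}, locally finite and with no global mass bound, whereas you import nonnegativity and global finiteness of $\varrho_r$ from the provenance of the measure in \autoref{Main-thm-2}(ii). These are not hypotheses of the proposition (nonnegativity of the $f_\tau$ is in fact nowhere established in the paper), and they are not cosmetic: both key inequalities of your second step --- the Harnack bound on annuli and $\int\sigma^2\le(\sup\sigma)\int\sigma$ --- are unavailable for the signed measure obtained by subtracting two solutions, which is exactly the object uniqueness requires you to control.

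Moreover, even within the class you do treat the conclusion degenerates and misses the point of the proposition. Your finiteness hypothesis is $\int_{\R^d}\sigma P_\alpha\,dr<\infty$, and your Liouville step yields $\sigma\equiv c$; but for $0<\alpha\le \tfrac d2$ one has $\int_{\R^d}P_\alpha\,dr=\infty$, so finiteness forces $c=0$ and hence $\varrho_r=0$. What you have actually proved is that the only nonnegative \emph{finite} solution is the zero measure --- a class that does not contain $\mu_\alpha$ of \eqref{explicit-Radon-sol}, whereas the purpose of the proposition (see the sentence preceding it and \eqref{decompostion-our-case}) is precisely to conclude $\varrho_r=\mu_r$, so nothing is left to ``absorb into $\mathfrak g$'' in \eqref{solution-form.gle}. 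If you drop finiteness so that $\mu_\alpha$ becomes admissible (its natural growth is $\varrho_r(B_R)\sim R^{d-2\alpha}$), your cut-off estimate no longer closes for $\alpha<\tfrac d2$: the right-hand side of the Caccioppoli bound then scales like $R^{d-2\alpha}$, which does not tend to zero, so the argument as written does not extend; making it work would require the one-dimensionality of the cone of positive solutions (criticality/Martin-boundary theory) or the spherical-harmonics ODE analysis you only sketch in the final lines. Finally, your claim that uniqueness genuinely fails among signed or non-finite Radon measures is plausible but unproved here, and even if correct it does not repair the proof: it shows that any proof must work in a class that contains $\mu_\alpha$ and excludes such solutions, which yours does not.
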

\begin{proof}
Recall that \eqref{sense-RAdon--} is a linear PDE. Hence the difference of two solutions, still denoted by $\varrho_r$,    satisfies  \eqref{sense-RAdon--}.
In order to prove uniqueness, we use the standard  duality argument and  Riesz representation theorem (see \textit{e.g.} \cite[Theorem 2.14]{Rudin}) for Radon measures. Namely, the uniqueness follows if we show
\begin{align}\label{uniq.cretirea-Radon}
    \langle \varrho_r, \psi\rangle_{\mathcal{M},C_c^\infty}=0 \text{ for any } \psi\in C^\infty_c(\mathbb{R}^d).
\end{align}
Let us see if \eqref{uniq.cretirea-Radon} holds. To abbreviate the notation, define
\[
\mathcal{L}^* \varphi = -\alpha r \cdot \nabla_r \varphi + \Delta_r \varphi + \dfrac{1}{2}\operatorname{div}_r(A_b(r)\nabla_r \varphi).
\]
From \eqref{sense-RAdon--}, we have $
\langle \varrho_r, \mathcal{L}^* \varphi \rangle = 0$ for any $ \varphi \in C_c^2(\mathbb{R}^d).
$
Let us analyze the adjoint equation. Namely,
let $\psi\in  C^\infty_c(\mathbb{R}^d) $ and denote by $B_M$ the ball with radius $M$ such that $\mathrm{supp}(\psi)\subset B_M$. Then, consider  the following  problem in $B_M$ with Dirichlet boundary condition:
\begin{align}\label{equ-adjoint-uniq}
\begin{cases}
\mathcal{L}^* \varphi = \psi & \text{in } B_M, \\
\varphi = 0 & \text{on } \partial B_M.
\end{cases}    
\end{align}
The existence  of solution in $H^1_0(B_M)$ to \eqref{equ-adjoint-uniq} follows by using \textit{"Fredholm alternative for linear PDEs"}  (see \textit{e.g.} \cite[Theorem 4 page 303]{evans2022partial}). Since $\partial B_M$ is $C^\infty$-boundary, 
the classical elliptic theory ( see  \textit{e.g.} \cite[Theorem 6 page 326]{evans2022partial}) ensures the 
 existence of $\varphi \in C^\infty(\overline{B_M}).$ Then, 
extend $\varphi$ by zero outside $B_M$ to get $\varphi \in C_c^2(\mathbb{R}^d)$. Therefore, we obtain
\begin{align}
    \langle \varrho_r, \psi\rangle_{\mathcal{M},C_c^\infty}=    \langle \varrho_r, \mathcal{L}^* \varphi \rangle_{\mathcal{M},C_c}=0.
\end{align}
The  claim  \eqref{uniq.cretirea-Radon} follows from the arbitrariness of $\psi$.
\end{proof}

Finally,  let us discuss  a selection principle for $\mathfrak{g}.$ \\

$\bullet$ \textbf{(Selection principle for $\mathfrak{g}$)} Since    $\displaystyle\sup_{\tau>0}\Vert f^\tau_0 \Vert_{L^1}\leq  \mathbf{\Lambda}_2,$ by using \eqref{bound-1-sec1}  we can set $g_\tau=\int_{\mathbb{R}^d}f_\tau dr$  and integrate  \eqref{Limit-FP-eqn-2} with respect to $r$,  therefore we get
\begin{align}\label{transport-measure}
\begin{cases}&\partial_tg_\tau+\Div_x(u_Lg_\tau)=0, \qquad \tau >0\\
&g_\tau|_{t=0}=g_0^\tau=\int_{\mathbb{R}^d}f_0^\tau dr \in L^1(\mathbb{T}^d).
\end{cases}\end{align}
Note that \eqref{transport-measure} is linear transport equation studied  in the seminal work \cite{diperna1989ordinary}. Therefore,
by writing   \cite[Corollary II.1 and Theorem II.3.]{diperna1989ordinary} in the case of torus, we have
\begin{proposition}\label{prop-transport-equation-1}
    Let $u_L\in C\big([0,T], C^2(\mathbb{T}^d;\mathbb{R}^d)\big)$, $\Div_x (u_L)=0$ and $g_0^\tau \in L^1(\mathbb{T}^d)$.   There exists a unique \emph{weak solution}  $g_\tau\in L^\infty([0,T]; L^1(\mathbb{T}^d)) $ to \eqref{transport-measure} in the following sense
\[
\int_0^T \int_{\mathbb{R}^d} \big( g_\tau(t,x) \, \partial_t \varphi(t,x) + g_\tau(t,x) \, u_L(t,x) \cdot \nabla \varphi(t,x) \big) \, dx \, dt
+ \int_{\mathbb{R}^d} g_0^\tau(x) \, \varphi(0,x) \, dx = 0.
\]
for every test function $\varphi \in C_c^\infty([0,T) \times \mathbb{T}^d).$ Moreover, the solution is renormalized:
    \[
    \partial_t \beta(g_\tau) + u_L \cdot \nabla \beta(g_\tau) = 0
    \]
    for all $\beta \in C^1(\mathbb{R})$ with bounded derivative and vanishing near $0$.
\end{proposition}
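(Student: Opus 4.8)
The plan is to obtain the statement as a transcription to the torus of the DiPerna--Lions theory of linear transport equations with Sobolev coefficients \cite{diperna1989ordinary}, so the real work is just checking that their hypotheses hold in our (much more regular) situation. First I would record the regularity of the drift: since $u_L\in C([0,T];C^2(\mathbb{T}^d;\mathbb{R}^d))$, in particular $u_L\in L^1(0,T;W^{1,\infty}(\mathbb{T}^d))\subset L^1(0,T;W^{1,1}(\mathbb{T}^d))$, and the incompressibility assumption gives $\Div_x u_L=0\in L^\infty((0,T)\times\mathbb{T}^d)$. Because $\mathbb{T}^d$ is compact, the growth-at-infinity conditions of \cite{diperna1989ordinary} (of the type $u_L/(1+|x|)\in L^1+L^\infty$) are automatically satisfied, and $g_0^\tau\in L^1(\mathbb{T}^d)$ is admissible initial data. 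Then \cite[Corollary II.1 and Theorem II.3]{diperna1989ordinary}, read for $2\pi\mathbb{Z}^d$-periodic solutions on $\mathbb{R}^d$ and identified with distributional solutions on $\mathbb{T}^d$, give at once existence, uniqueness in $L^\infty([0,T];L^1(\mathbb{T}^d))$ and the renormalization property.

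For the reader I would also sketch the (here elementary) structure of the argument. \emph{Existence:} since $u_L$ is Lipschitz in $x$ uniformly in $t$, the ODE $\dot X_t=u_L(t,X_t)$ generates a flow $\phi_t\colon\mathbb{T}^d\to\mathbb{T}^d$ of $C^1$ diffeomorphisms, measure-preserving because $\Div_x u_L=0$. For smooth $g_0$ the function $g(t,x):=g_0(\phi_t^{-1}(x))$ is a classical solution; approximating $g_0^\tau$ in $L^1(\mathbb{T}^d)$ by smooth data and using that $h\mapsto h\circ\phi_t^{-1}$ is an $L^1(\mathbb{T}^d)$-isometry (measure preservation) lets one pass to the limit and obtain $g_\tau\in C([0,T];L^1(\mathbb{T}^d))$ solving the equation in the stated weak form, with $\|g_\tau(t)\|_{L^1}=\|g_0^\tau\|_{L^1}$ for all $t$.

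\emph{Renormalization and uniqueness:} mollifying in $x$, $g_\tau^\varepsilon:=g_\tau\ast\rho_\varepsilon$ satisfies $\partial_t g_\tau^\varepsilon+u_L\cdot\nabla_x g_\tau^\varepsilon=r_\varepsilon$ with commutator $r_\varepsilon:=u_L\cdot\nabla_x g_\tau^\varepsilon-(u_L\cdot\nabla_x g_\tau)\ast\rho_\varepsilon$, and the DiPerna--Lions commutator lemma (applicable since $u_L\in W^{1,\infty}$ and $g_\tau\in L^1$, see \cite{diperna1989ordinary}) gives $r_\varepsilon\to0$ in $L^1_{loc}((0,T)\times\mathbb{T}^d)$. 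Hence for $\beta\in C^1(\mathbb{R})$ with bounded derivative one has $\partial_t\beta(g_\tau^\varepsilon)+u_L\cdot\nabla_x\beta(g_\tau^\varepsilon)=\beta'(g_\tau^\varepsilon)r_\varepsilon$, and letting $\varepsilon\to0$ yields $\partial_t\beta(g_\tau)+\Div_x(u_L\beta(g_\tau))=0$ (using $\Div_x u_L=0$), which is the renormalization statement. For uniqueness the difference $w$ of two solutions has $w|_{t=0}=0$; taking $\beta_\delta(s):=\sqrt{s^2+\delta^2}-\delta$ (so $\beta_\delta(0)=0$ and $\beta_\delta'$ bounded) and integrating the renormalized identity over the boundaryless manifold $\mathbb{T}^d$ annihilates the divergence term, so $\tfrac{d}{dt}\int_{\mathbb{T}^d}\beta_\delta(w(t,x))\,dx=0$; therefore $\int_{\mathbb{T}^d}\beta_\delta(w(t))\,dx=\int_{\mathbb{T}^d}\beta_\delta(w(0))\,dx=0$, and letting $\delta\to0$ gives $w\equiv0$.

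The only genuinely technical ingredient is the commutator lemma underlying renormalization, which is exactly the content of \cite{diperna1989ordinary} and which I would invoke rather than reprove; everything else follows from the high regularity of $u_L$, which in fact makes the characteristics construction above fully self-contained. I note in passing that uniqueness could alternatively be obtained by a pure duality argument (transporting a smooth test function backwards along the $C^1$ flow $\phi_t$), but the renormalization route has the advantage of also delivering the last assertion of the proposition at no extra cost.
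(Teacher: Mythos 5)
Your proposal is correct and follows essentially the same route as the paper, which proves this proposition simply by invoking \cite[Corollary II.1 and Theorem II.3]{diperna1989ordinary} transcribed to the torus setting. The additional characteristics/commutator sketch you include is sound but goes beyond what the paper records, which is just the citation.
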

\textit{A priori }, at the limit, we will be interested in \eqref{transport-measure} with initial Radon measure data ( a weak-* limit, as $\tau \to 0$, of $(f_0^\tau)_\tau$ in $\mathcal{M}(\mathbb{T}^d)$), which requires more involved analysis so we restrict ourself to the case $L^1$-data  at the limit, in order to explain the idea of selection.
\begin{proposition}
Let $g_0 \in L^1(\mathbb{T}^d)$ and 
    assume moreover that $(g_0^\tau)_\tau$ in \eqref{transport-measure} converges to $g_0$ in  $L^1(\mathbb{T}^d)$ as $\tau \to 0$. Then $g_\tau$ converges to $\widetilde{g}$ in $L^\infty([0,T]; L^1(\mathbb{T}^d))$ as $\tau \to 0$,  where  $\widetilde{g}\in L^\infty([0,T]; L^1(\mathbb{T}^d))$  is  the unique \emph{weak solution}   to 
    \begin{align}\label{transport-measure-limit}
\begin{cases}&\partial_t\widetilde{g}+\Div_x(u_L\widetilde{g})=0,\\
&\widetilde{g}|_{t=0}=g_0.
\end{cases}\end{align}
\end{proposition}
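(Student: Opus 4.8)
The plan is to exploit the linearity of \eqref{transport-measure} together with the conservation of the $L^1$-norm along the (measure-preserving) characteristic flow of $u_L$. First I would observe that \autoref{prop-transport-equation-1} applies verbatim to the datum $g_0\in L^1(\mathbb{T}^d)$, producing a unique weak solution $\widetilde{g}\in L^\infty([0,T];L^1(\mathbb{T}^d))$ of \eqref{transport-measure-limit}; likewise $g_\tau$ is the unique weak solution of \eqref{transport-measure} with datum $g_0^\tau$. Since the equation is linear and $u_L$ is independent of $\tau$, the difference $w_\tau:=g_\tau-\widetilde{g}$ is the unique weak solution of the same transport equation with initial datum $w_\tau|_{t=0}=g_0^\tau-g_0\in L^1(\mathbb{T}^d)$.

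Next I would establish the $L^1$-stability identity $\Vert w_\tau(t)\Vert_{L^1(\mathbb{T}^d)}=\Vert g_0^\tau-g_0\Vert_{L^1(\mathbb{T}^d)}$ for a.e.\ $t\in[0,T]$. There are two equivalent routes. One uses the renormalization property of \autoref{prop-transport-equation-1}: approximating $s\mapsto|s|$ from above by a sequence $\beta_n\in C^1(\mathbb{R})$ with bounded derivative vanishing near $0$, one has $\partial_t\beta_n(w_\tau)+u_L\cdot\nabla_x\beta_n(w_\tau)=0$; integrating over $\mathbb{T}^d$, using $\Div_x u_L=0$ and the absence of boundary on the torus, gives $\frac{d}{dt}\int_{\mathbb{T}^d}\beta_n(w_\tau)\,dx=0$, and passing to the limit $n\to\infty$ by dominated convergence (since $w_\tau(t)\in L^1$) yields $\int_{\mathbb{T}^d}|w_\tau(t)|\,dx=\int_{\mathbb{T}^d}|g_0^\tau-g_0|\,dx$. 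Alternatively, since $u_L\in C([0,T];C^2(\mathbb{T}^d;\mathbb{R}^d))$ the characteristic flow $\phi_t$ is a $C^1$-diffeomorphism of $\mathbb{T}^d$, divergence-freeness forces $\det D\phi_t\equiv 1$ so that $\phi_t$ preserves Lebesgue measure, the unique weak solution is the push-forward $w_\tau(t,\cdot)=(g_0^\tau-g_0)\circ\phi_t^{-1}$, and the change of variables $x\mapsto\phi_t^{-1}(x)$ gives the identity at once.

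Finally, taking the essential supremum over $t\in[0,T]$,
\[
\Vert g_\tau-\widetilde{g}\Vert_{L^\infty([0,T];L^1(\mathbb{T}^d))}=\Vert g_0^\tau-g_0\Vert_{L^1(\mathbb{T}^d)},
\]
and the right-hand side tends to $0$ as $\tau\to0$ by hypothesis; this proves the asserted convergence, and since the limit is characterized as the unique weak solution of \eqref{transport-measure-limit}, the identification of the limit is immediate. The only mildly delicate point is the approximation of the absolute value by admissible renormalizing functions $\beta_n$ (they are required to vanish near $0$, whereas $|\cdot|$ does not), but this is a standard truncation argument; the flow representation, valid here thanks to the $C^2$-regularity of $u_L$, bypasses it altogether.
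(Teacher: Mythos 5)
Your proposal is correct and follows essentially the same route as the paper: by linearity the difference $g_\tau-\widetilde{g}$ is the unique weak solution with datum $g_0^\tau-g_0$, and the renormalization property with $C^1$ approximations of the absolute value yields the $L^1$ stability bound $\sup_{t\in[0,T]}\Vert g_\tau(t)-\widetilde{g}(t)\Vert_{L^1(\mathbb{T}^d)}\leq\Vert g_0^\tau-g_0\Vert_{L^1(\mathbb{T}^d)}\to 0$. Your alternative argument via the volume-preserving characteristic flow (legitimate here since $u_L$ is $C^2$ and divergence-free) is a valid shortcut not taken in the paper, but it does not change the substance of the proof.
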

\begin{proof}
The existence  and uniqueness to   \eqref{transport-measure-limit} follows as above, namely by using \cite[Corollary II.1 and Theorem II.3.]{diperna1989ordinary}. On the other hand, using the linearity of \eqref{transport-measure} and \eqref{transport-measure-limit}, the difference $g_\tau-\widetilde{g}$ is the    unique \emph{weak solution}  to   
\begin{align}\label{transport-measure-difference}
\begin{cases}&\partial_t(g_\tau-\widetilde{g})+\Div_x(u_L(g_\tau-\widetilde{g}))=0, \qquad \tau >0\\
&g_\tau-\widetilde{g}|_{t=0}=g_0^\tau-g_0 \in L^1(\mathbb{T}^d).
\end{cases}\end{align}
Recall that the solution is renormalized. By choosing appropriate sequence $\beta_M\in C^1(\mathbb{R})$ with bounded derivative, vanishing near $0$ and converges to the absolute value function ( note that we can also use similar computation as in \autoref{prop-est-L1} as well), we get
\begin{align}
     \sup_{t\in[0,T]}  \int_{\mathbb{T}^d}\vert g_\tau-\widetilde{g}\vert  dx \leq \int_{\mathbb{T}^d}\vert g_0^\tau-g_0\vert dx \to 0 \text{ as } \tau \to 0.
\end{align}\end{proof}
Consequently, a naturel selection to avoid the trivial non-uniqueness, we can select $\mathfrak{g}$ in \eqref{solution-form.gle} to be  $\widetilde{g}$,   the unique \emph{weak solution}   of \eqref{transport-measure-limit}.  
\begin{remark}
    A natural assumption guarantees that $(g_0^\tau)_\tau$ 
converges towards $g_0$ in  $L^1(\mathbb{T}^d)$ is to assume $f_0^\tau$ converges to some $f_0\in L^1(\mathbb{T}^d\times\R^d)$, instead assuming only the boundedness in $L^1(\mathbb{T}^d\times\R^d)$ as in \autoref{Main-thm-2}-ii. 
\end{remark}

\subsection{On the Role of the Unbounded Potential}\label{Sec-Unbounded-potential} Let us show that the unbounded nature of the restoring force leads to the emergence of a  measure valued limit if $0<\alpha\leq \frac{d}{2}.$ Note that the unbounded nature of the force allows the polymer end-to-end vector to take values in $\mathbb{R}^d$, which plays a role in the lack of  $L^1$-compactness in this regime.
\begin{proposition}\label{prop-loss-L1} Under the assumptions of \autoref{Main-thm-2} (ii),  let  $f_\tau \in L^\infty(0,T;L^1(\mathbb{T}^d\times \mathbb{R}^d))$ be  the unique solution  to \eqref{Limit-FP-eqn-2} satisfying \begin{align*}
     \sup_{t\in[0,T]}  \int_{\mathbb{T}^d}\int_{\mathbb{R}^d}\vert f_\tau(t)\vert  drdx \leq  \mathbf{\Lambda}_2,\quad  \forall\tau >0.
\end{align*}
Then $(f_\tau)_\tau$  does not contain any weakly-* convergent subsequence in $L^\infty(0,T;L^1(\mathbb{T}^d\times \mathbb{R}^d))$.
\end{proposition}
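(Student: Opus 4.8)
The plan is to argue by contradiction, using the fact — already built into \autoref{Main-thm-2}(ii) and the rigidity lemma \autoref{prop-non-uniq-mesure} — that any limit of $(f_\tau)_\tau$ must satisfy the $r$-equilibrium \eqref{eqn-radon-thm-**}, whose Radon solutions are so spread out in $r$ that they cannot be $L^1$ when $\alpha\le\frac d2$. Conceptually this is the statement that $(f_\tau)_\tau$ loses tightness in $r$: a non-vanishing fraction of the mass is pushed to $\{|r|=\infty\}$ as $\tau\to0$, so by the Dunford--Pettis theorem no subsequence can be relatively weakly compact in $L^1(\mathbb{T}^d\times\mathbb{R}^d)$. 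I will realize this picture quantitatively rather than via tightness directly.

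Concretely, suppose some subsequence $(f_{\tau_j})_j$ converges weakly-$*$ in $L^\infty(0,T;L^1(\mathbb{T}^d\times\mathbb{R}^d))$ to a limit $f$. First I would pass to the limit in \eqref{Limit-FP-eqn-2} multiplied by $\tau_j$: testing against $\varphi\in C^\infty_c((0,T)\times\mathbb{T}^d\times\mathbb{R}^d)$, the three terms carrying the prefactor $\tau_j$ tend to $0$ (since $(f_{\tau_j})$ is bounded in $L^\infty_tL^1$ and all coefficients $u_L,\nabla u_L\,r$ are bounded on $\mathrm{supp}\,\varphi$), while the singular term converges — by the weak-$*$ convergence against $C^\infty_c$ and one integration by parts in $r$ — to $\tfrac1{\zeta\alpha}\langle f,\mathcal L^*\varphi\rangle$ with $\mathcal L^*\varphi=-\alpha r\cdot\nabla_r\varphi+\Delta_r\varphi+\tfrac12\Div_r(A_b(r)\nabla_r\varphi)$. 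Hence $f$ solves \eqref{eqn-radon-thm} in $\mathcal{D}'$. Since the operator acts only in $r$, a standard slicing/separability argument (test with $\varphi=\eta(t)\theta(x)\chi(r)$ and intersect null sets over a countable dense family of $\chi$) shows that for a.e.\ $(t,x)$ the finite measure $f(t,x,\cdot)\,dr$ solves \eqref{eqn-radon-thm-**} in the sense \eqref{sense-RAdon--}. By \autoref{prop-non-uniq-mesure} together with \autoref{Rmq-radon-stretching}, every such solution is proportional to $\mu_\alpha(dr)=P_\alpha(r)\,dr$ (equivalently, the substitution $f=P_\alpha g$ turns the slice equation into $\Div_r\!\big(P_\alpha(I+\tfrac12A_b)\nabla_r g\big)=0$, forcing $g$ to be constant by a weighted energy estimate, since $I+\tfrac12A_b\ge I>0$). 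As $0<\alpha\le\frac d2$ one has $\int_{\mathbb{R}^d}P_\alpha\,dr=+\infty$, so the only multiple of $\mu_\alpha$ in $L^1(\mathbb{R}^d)$ is $0$; since $f(t,x,\cdot)\in L^1(\mathbb{R}^d)$ a.e., I conclude $f\equiv0$.

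It then remains to rule out $f_{\tau_j}\rightharpoonup^*0$. Integrating \eqref{Limit-FP-eqn-2} over $\mathbb{T}^d\times\mathbb{R}^d$, using $\Div_x u_L=0$ and the vanishing of the $r$-fluxes at $|r|=\infty$, gives conservation of mass, $\int_{\mathbb{T}^d\times\mathbb{R}^d}f_\tau(t)\,drdx=\int_{\mathbb{T}^d\times\mathbb{R}^d}f_0^\tau\,drdx$ for all $t$; testing the weak-$*$ limit against $\mathbf 1_{[0,T]}\otimes\mathbf 1_{\mathbb{T}^d\times\mathbb{R}^d}\in L^1(0,T;L^\infty(\mathbb{T}^d\times\mathbb{R}^d))$ then forces $\int_{\mathbb{T}^d\times\mathbb{R}^d}f_0^{\tau_j}\,drdx\to0$. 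This contradicts the standing situation in which the $f_0^\tau$ — being polymer densities — are nonnegative with total mass bounded away from $0$; note that if instead $\|f_0^\tau\|_{L^1}\to0$ then $f_\tau\to0$ strongly in $L^\infty(0,T;L^1)$ and the statement fails for this trivial reason, so this degenerate case is tacitly excluded by the hypotheses of \autoref{Main-thm-2}(ii) and should be stated as such.

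The main obstacle I anticipate is not a single computation but making the first step rigorous: pinning down the precise meaning of ``weakly-$*$ convergent in $L^\infty(0,T;L^1)$'' (namely as elements of $(L^1(0,T;L^\infty))'$), checking the limit $f$ lies in $L^\infty(0,T;L^1)$, and then executing the slicewise reduction cleanly so that \autoref{prop-non-uniq-mesure} can be invoked for a.e.\ $(t,x)$. A secondary technical point is justifying that the quasi-regular weak solutions of \eqref{Limit-FP-eqn-2} genuinely conserve mass — i.e.\ that the boundary terms at $|r|=\infty$ vanish — which should follow from the decay encoded in that notion of solution but needs to be invoked carefully.
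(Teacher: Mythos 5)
Your proposal follows, in substance, the same strategy as the paper's proof of \autoref{prop-loss-L1}: argue by contradiction, show that any weak-* limit of the subsequence must solve the stationary equation \eqref{eqn-radon-thm}, invoke the rigidity of \autoref{Subsection-discussion-uniq} and \autoref{prop-non-uniq-mesure} to conclude that the limit has the tensor form $h\otimes(1+\frac{\vert r\vert^2}{2})^{-\alpha}dr$, and exploit the non-integrability of $(1+\frac{\vert r\vert^2}{2})^{-\alpha}$ when $0<\alpha\le \frac d2$. The differences in this part are minor: the paper does not pass to the limit in \eqref{Limit-FP-eqn-2} multiplied by $\tau$ against the $L^\infty_tL^1$ duality as you do, but reuses the measure-valued convergence already obtained in \autoref{Main-thm-2}(ii) (i.e.\ \autoref{prop-est-L1}) and identifies the two limits by testing both convergences against $C_c$ functions; your direct limit passage plus the slicing in $(t,x)$ accomplishes the same identification and is a legitimate variant.

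The genuine divergence is the endgame, and it is the point where your argument does not close under the stated hypotheses. The paper stops at $\overline f=h\otimes(1+\frac{\vert r\vert^2}{2})^{-\alpha}dr$ and reads off the contradiction with $\overline f\in L^\infty(0,T;L^1)$, implicitly discarding the degenerate case $h=0$ (a zero limit). You instead observe, correctly, that the rigidity by itself only forces $f\equiv0$, and you then try to exclude the zero limit via exact mass conservation together with nonnegativity and a lower bound on the initial mass. Neither ingredient is available in the framework of \autoref{Main-thm-2}(ii): the hypotheses give only the upper bound $\sup_\tau\Vert f_0^\tau\Vert_{L^1}\le\mathbf{\Lambda}_2$ (no sign condition, no mass bounded below), and the paper proves only the inequality \eqref{est-L1-key}, not conservation of $\int f_\tau$; proving conservation would require showing that the flux $\tfrac1{2\zeta\alpha\tau}A_b(r)\nabla_rf_\tau$, which grows like $\vert r\vert^2\vert\nabla_rf_\tau\vert$, contributes no boundary term at $\vert r\vert=\infty$, and this is not immediate from the regularity \eqref{regularity-f} (in case (ii) the data is not assumed in $H_\alpha$). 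So, as written, your final step is a gap --- although it is a gap you introduce precisely because you are more scrupulous than the paper about the vanishing-limit scenario; if one grants the implicit non-degeneracy (a nonzero limiting measure) on which the paper's own contradiction rests, your argument closes and is essentially the published one. A secondary caveat: your parenthetical re-proof of the rigidity (substituting $f=P_\alpha g$ and running a weighted energy estimate over all of $\mathbb{R}^d$) is only formal, since the integration by parts at infinity needs decay control on $g=f/P_\alpha$; citing \autoref{prop-non-uniq-mesure}, as you primarily do, is the intended route.
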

\begin{proof} We reason by contradiction.
Assume that $(f_\tau)_\tau$ has a subsequence,  denoted  by $(f_{\tau_k})_{\tau_k}, k\in \mathbb{N}$, which   converges weakly-*  to some $\overline{f}$ in $L^\infty(0,T;L^1(\mathbb{T}^d\times \mathbb{R}^d))$. Namely, the following holds
\begin{align}\label{cv-L1-contra--1}
\int_0^T    \int_{\mathbb{T}^d}\int_{\mathbb{R}^d}f_{\tau_k}(s,x,r)\xi(s)\psi(x)\phi(r) dsdxdr \to  \int_0^T    \int_{\mathbb{T}^d}\int_{\mathbb{R}^d}\overline{f}(s,x,r)\xi(s)\psi(x)\phi(r) dsdxdr, 
\end{align}
for any $ \xi \in L^1([0,T]), \psi\in L^\infty(\mathbb{T}^d), \phi\in L^\infty(\mathbb{R}^d).$ 
On the other hand,  we know that  $f_{\tau_k}$ is the unique solution to \eqref{Limit-FP-eqn-2} for any $k\in \mathbb{N}$ and  it is  bounded in $L^\infty(0,T;L^1(\mathbb{T}^d\times \mathbb{R}^d))$. Therefore, it converges, with respect to the weak-* topology,  in $L^\infty(0,T;\mathcal{M}(\mathbb{T}^d\times \mathbb{R}^d))$  (up to subsequence) to  some $\widetilde{\nu}$ such that
\begin{align}
 &\Div_r\big(\alpha r\widetilde{\nu}+\nabla_r \widetilde{\nu}+\dfrac{1}{2}A_b(r)\nabla_r \widetilde{\nu}\big)=0 \text{ in } \mathcal{D}^\prime,\notag\\
\text{ and  } &\int_0^T    \int_{\mathbb{T}^d}\int_{\mathbb{R}^d}f_{\tau_k}(s,x,r)\xi(s)\psi(x)\phi(r) dsdxdr \to  \int_0^T    \langle  \widetilde{\nu}, \psi\phi\rangle \xi(s) ds, \label{cv-L1-contra--2}
\end{align}
for any $ \xi \in L^1([0,T]),\psi\in C_c(\mathbb{T}^d)$ and $\phi\in C_c(\mathbb{R}^d).$
We proved in \autoref{Subsection-discussion-uniq} that $\widetilde{\nu}$ has necessarily the form  $h \otimes(1+\frac{\vert r\vert^2}{2})^{-\alpha} dr$ for some $h\in L^\infty(0,T;\mathcal{M}(\mathbb{T}^d))$. Now, set  $\psi\in C_c(\mathbb{T}^d)$ and $\phi\in C_c(\mathbb{R}^d)$ in \eqref{cv-L1-contra--1}. Using the uniqueness of the limit, we obtain from  \eqref{cv-L1-contra--2} 
\begin{align*}
    \int_0^T    \int_{\mathbb{T}^d}\int_{\mathbb{R}^d}\overline{f}(s,x,r)\psi(x)\phi(r) \xi(s) dxdrds=  \int_0^T    \langle  h\otimes(1+\frac{\vert r\vert^2}{2})^{-\alpha} dr, \psi \phi\rangle \xi(s)ds.
\end{align*}
The arbitrariness of of $\psi,\phi$ and  $\xi$ gives  $\overline{f}=h\otimes(1+\frac{\vert r\vert^2}{2})^{-\alpha} dr$. Finally, recall that $$\int_{\mathbb{R}^d}(1+\frac{\vert r\vert^2}{2})^{-\alpha} dr=+\infty \text{ if } 0<\alpha\leq \frac{d}{2},$$ which contradicts the assumption  \(\overline{f} \in L^\infty(0,T; L^1(\mathbb{T}\times\mathbb{R}^d))\).
\end{proof}
\begin{remark} We can use a similar argument and   Dunford-Pettis theorem (see \textit{e.g.} \cite[Theorem 4.30]{Brezis}) to show that  
 the sequence \((f_\tau)_\tau\) is not uniformly integrable.
\end{remark}

\subsection{Physical interpretation}
From a physical point of view, 
the statistics of the polymer length $R$ have been investigated by several authors in the physical literature, see for instance  \cite{Balk,Gerashen,PicardoLanceVinc}. In the coil state,  the distribution of polymer
end-to-end vector $R$ is found to be power law
\begin{equation}
f\left(  R\right)  \sim R^{-1-\theta}\qquad\text{for relatively large
}R\label{power law}%
\end{equation}
with the exponent $\theta$ positive (so that $f$ is normalizable). The
exponent $\theta$ depends on the stretching properties of the turbulent flow:
the highest is the stretching intensity, the lowest is $\theta$. At $\theta=0$
one has the coil-stretch transition.\\
To make links with the physical literature on the \textit{coil-stretch transition}, it is worth recalling that the tail exponent $\theta$ is  related to the Lyapunov exponents of the turbulent flow and polymer relaxation time.  More precisely, if $\phi_{t}\left(  x\right)  $ denotes
the Lagrangian flow associated to the turbulent flow, and we set
\[
\mathbf{L}\left(  q\right)  =\lim_{t\rightarrow\infty}\frac{1}{t}%
\log\mathbb{E}\left[  \left\vert D\phi_{t}\left(  x\right)  \right\vert
^{q}\right]
\]
then $\theta$ satisfies 
$
\frac{\theta}{2\beta}=\mathbf{L}\left(  \theta\right).
$
It is important to mention that  the computation of $\mathbf{L}\left( \theta\right)  $ is not
easy  in general and  we would like to predict the exponent $\theta$ based on turbulence features. In our case, we obtain the following probability density
\begin{align*}
 p( r) =  \dfrac{1}{Z} (1+\frac{\vert r\vert^2}{2})^{-\alpha} \sim  \vert r\vert^{-2\alpha} \quad \text{ for relatively large } \vert r\vert,
\end{align*}
where  $\vert r\vert$ denotes the  length of the end-to-end vector  of the polymers and $\alpha$ depends on some interpretable constants, see \eqref{constant-meaning}. Taking into account the space dimension, we get $$p( \vert r\vert )\sim \vert r\vert^{-1+d-2\alpha}$$ in accordance with the prediction of \cite{Balk}.
Now, comparing to \eqref{power law} we get 
\begin{align}\label{tail-math-phys}
 \theta=2\alpha-d  = \dfrac{2}{\zeta C_d C_3^2}-d \text{ where } \zeta=\dfrac{\beta}{\tau}=\dfrac{\text{relaxation time of polymer}}{\text{dominant time-scale of small scale  turbulence}}.
\end{align}
 
Let us discuss the  results of \autoref{MAin-thm-1} and \autoref{Main-thm-2}. We distinguish two main cases.
\begin{enumerate}
    \item[i.]  If $\alpha>\dfrac{d}{2}$\footnote{Note that $\int_{\mathbb{R}^d} (1+\frac{\vert r\vert^2}{2})^{-\alpha} dr=\vert S^{d-1}\vert\int_0^{+\infty} (1+v^2)^{-\alpha}v^{d-1}dv<\infty$ if and only if  $\alpha>\dfrac{d}{2}$ ($\vert S^{d-1}\vert$ denotes the surface area  of the unit sphere in 
$\mathbb{R}^d$).
}, then we have  two sub-cases:
       \begin{itemize}
     \item The case $\alpha>\dfrac{d+1}{2}$ \footnote{Note that first absolute moment $\int_{\mathbb{R}^d}\vert r\vert (1+\frac{\vert r\vert^2}{2})^{-\alpha} dr=\vert S^{d-1}\vert\int_0^{+\infty} (1+v^2)^{-\alpha}v^{d}dv<\infty$ if and only if  $\alpha>\dfrac{d+1}{2}.$}: the average (mean) associated with   the probability density in  \autoref{MAin-thm-1} is finite and the most of polymers have an equilibrium size.  One can consider that polymers are in coil state in this regime.
    \item  The case $\dfrac{d}{2}<\alpha\leq\dfrac{d+1} {2}$: The mean is not finite  and  larger values of   polymer end-to-end vector are
more probable, which means that the most of polymers have large size and the polymers are in stretched state. Moreover, at $\alpha=\dfrac{d+1}{2}$   the average (mean) associated with   the probability density in  \autoref{MAin-thm-1} is not  finite anymore, this can be interpreted as the criterion for the coil-stretch transition. Note also that $\frac{d}{2}$ corresponds to $\theta=0$ from \eqref{tail-math-phys}, which is the coil-stretch transition threshold  following \cite{Balk}.
\end{itemize}

\item[ii.] The case $0<\alpha\leq \dfrac{d}{2}:$ In this case, we no longer have a probability density in \autoref{Main-thm-2}. Moreover, we don't have  \textit{a priori} the uniqueness of the limit as $\tau\to 0$.  Depending on the regularity of the initial data, we obtain different limits, the first one is the trivial limit. More importantly, the second limit $\nu,$ Radon measure valued one in \autoref{Main-thm-2}. The latter seems to be more natural because it corresponds to the $L^1$-framework and we are dealing with Fokker-Planck type equation (see \cite[Rmq. 11]{FlaTah24}). As noticed in  \autoref{Rmq-radon-stretching} and \autoref{Subsection-discussion-uniq}, we can define a family of explicit solutions given by $$ g(t,x)\otimes \mu_\alpha,\quad g\in L^\infty(0,T;\mathcal{M}(\mathbb{T}^d)) \text{ and } \mu_\alpha \text{ given by \eqref{explicit-Radon-sol}}.$$
Notice that $\mu_\alpha$ show a power law decay with the exponent $\alpha$ in the generalized sense of Radon measure.
From \eqref{constant-meaning}, as $\alpha$ decreases, the turbulent flow becomes stronger. Thus, we can say  that the  polymers are strongly stretched in comparison with the second sub-case above, where the pdf must be interpreted as a less regular object, namely as a  Radon measure absolutely continuous with respect to Lebesgue measure on $\mathbb{R}^d$, with density (Radon–Nikodym derivative) showing the power law decay behavior.
In addition, in the stretched state, the precise mathematics depends on the idealizations of
the model.  If we had introduced a superlinear damping instead of the linear
damping $-\frac{1}{\beta}R_t$, this would produce a sort of cut-off at very high lengths ( e.g. \textit{FENE} model, see  \autoref{outlook}), so that the behavior (\ref{power law}) would be true only
in a range
\[
R_{min}<<R<<R_{max}%
\]
and globally the function $f$ would still be a pdf. We refer to \autoref{outlook} for further discussion on \textit{FENE} model. In our idealization of
linear damping, the stretching may overcome the damping and lead to infinite
length in the asymptotic regime, which is the idealized signature of stretch state. More precisely,  note that the mathematical idealization in the Hookean model allows the infinite extensibility mechanism.   
Recall that the term including $A_b(r)$  (see the operator $\mathcal{L}$ in \eqref{operator-L-def}) is a consequence of stretching effect, resulting after taking the first scaling limit \cite{FlaTah24} and  it is responsible for the loss of compactness in $L^1$ if $0<\alpha\leq \frac{d}{2}$. Indeed, in the absence of the term $A_b(r)$ we see that  $\mathcal{L}$ is the  Fokker-Planck operator associated  with Ornstein-Uhlenbeck process and we expect a Gaussian distribution as $\tau\to 0$, even if the finite extensibility constraint is missing. On the other hand, since $A_b(r)r = |r|^2 r$ and  $|r|^2$is the smallest eigenvalue of the matrix $A_b(r)$, the effective diffusion increases with $|r|$ and  enhance the outward motion for large values of  $|r|$. Consequently, it allows the  mass to escape to infinity and, mathematically speaking, it indicates a failure of tightness of the associated probability measures and the loss of  compactness   in $L^1$-space, see \autoref{Sec-Unbounded-potential}.  In other words,  the term including $A_b(r)$ overcomes the drift term containing $\alpha$ (which confines the mass)  if  $0<\alpha\leq \frac{d}{2}$. To conclude, the notion of "infinite length" reflects the possibility of polymers reaching infinite length and  this corresponds to  the loss of compactness from a mathematical point of view. Summarizing, this phenomenon is a consequence of  a combination of the stretching effect of the turbulence and the   infinite extensibility possibility of the Hookean model.  
\end{enumerate} 

In all cases, our main results justified rigorously match  the physical prediction (see \textit{e.g.} \cite{Balk}) concerning the power-law distribution of polymers embedded in turbulent flow.

\subsection{The coil-stretch operator}
A first step in the study of the limit as $\tau \to 0$ in \eqref{Limit-FP-eqn-2} requires understanding the singular term $\dfrac{1}{\zeta\alpha\tau}\Div_r\Big(\alpha rf_\tau+\nabla_rf_\tau+\dfrac{1}{2} A_b(r)\nabla_r f_\tau\Big)$. This will be the objective of this subsection.
    Notice that $P_\alpha(r)=(1+\frac{\vert r\vert^2}{2})^{-\alpha}$\footnote{In the following, we use  $p_\alpha$ for the renormalized  Cauchy-type function and  $P_\alpha$ for the non renormalized  one.} satisfies
\begin{align}\label{ground-state}
    \alpha rP_\alpha(r)+\nabla_rP_\alpha(r)+\dfrac{1}{2} A_b(r)\nabla_r P_\alpha(r)=0,
\end{align}
where we denoted $A_b(r)=(b+1)\lvert r\rvert^2 I-br\otimes r$. 
Thus
\begin{align*}
    \alpha rg+\nabla_rg+\dfrac{1}{2} A_b(r)\nabla_r g
    &=(1+\frac{\vert r\vert^2}{2})^{-\alpha}\big[\nabla_r\{g(1+\frac{\vert r\vert^2}{2})^{\alpha}\}+\dfrac{1}{2} A_b(r)\nabla_r \{g(1+\frac{\vert r\vert^2}{2})^{\alpha}\} \big]\\
    &=(1+\frac{\vert r\vert^2}{2})^{-\alpha}\big[\nabla_rg_\alpha+\dfrac{1}{2} A_b(r)\nabla_r g_\alpha\big],
\end{align*}
where (to simplify the notation) we used $g_\alpha=g(1+\frac{\vert r\vert^2}{2})^{\alpha}$. Thus, we can guess formally  that the penalization $\tau \to 0$ leads to 
$$f_\tau(t,x,r) \simeq \rho(t,x)p_\alpha(r),$$
where $p_\alpha(r)=\dfrac{1}{Z}(1+\frac{\vert r\vert^2}{2})^{-\alpha}$ ( $Z$ is a normalizing
factor) is generalized Cauchy  density with parameter $\alpha>\frac{d}{2}.$
The aim of this subsection is to study some spectral properties of the following operator
\begin{align}\label{operator-L-def}
  \mathcal{L}(f)=\Div_r\Big(\alpha rf+\nabla_rf+\dfrac{1}{2} \left((b+1)\lvert r\rvert^2 I-br\otimes r\right)\nabla_r f\Big), \quad b\in \{1,2\}.
\end{align}
Let $d\in\{2,3\}$ and consider the following  space \begin{align*}
    \mathbf{X}_\alpha=\{ g:\mathbb{R}^d\to \mathbb{R}: \quad \int_{\mathbb{R}^d}g^2(r)(1+\frac{\vert r\vert^2}{2})^\alpha dr<\infty\} \quad \alpha > 0.
\end{align*}
Notice that $\mathbf{X}_\alpha$ is a Hilbert  space endowed with its natural inner product, namely
$$(f,g)_{\mathbf{X}_\alpha}=\int_{\mathbb{R}^d}f(r)g(r)(1+\frac{\vert r\vert^2}{2})^\alpha dr .$$
We introduce the following unbounded operator $\mathcal{L}$ acting on $\mathbf{X}_\alpha$ as follows
\begin{align*}
    \mathcal{L}: \mathbf{X}_\alpha &\to \mathbf{X}_\alpha\\
    g&\mapsto  \mathcal{L}(g)=\Div_r\Big((1+\frac{\vert r\vert^2}{2})^{-\alpha}\big[\nabla_rg_\alpha+\dfrac{1}{2} A_b(r)\nabla_r g_\alpha\big]\Big),
\end{align*}
with  domain $$\mathcal{D}(\mathcal{L})=\{ g\in \mathbf{X}_\alpha, \quad \nabla_r\cdot\Big((1+\frac{\vert r\vert^2}{2})^{-\alpha}\big[(I+\dfrac{1}{2} A_b(r))\nabla_rg_\alpha\big]\Big) \in \mathbf{X}_\alpha\}.$$
 It is clear that $A_b(r)^*=A_b(r)$. Let us introduce the following space $\mathbf{Z}_\alpha$, which serves to prove  the closedness of $-\mathcal{L}.$
 \begin{align*}
     \mathbf{Z}_\alpha=\{g\in \mathbf{X}_\alpha: \quad \int_{\mathbb{R}^d}(1+\frac{\vert r\vert^2}{2})^{-\alpha+1}\vert \nabla_r f(1+\frac{\vert r\vert^2}{2})^{\alpha}\vert^2dr<+\infty \}.
 \end{align*}
It is not difficult to check that $\mathbf{Z}_\alpha$ is a Hilbert space equipped by its natural inner product.
 Let us  present the main spectral properties of the operator $\mathcal{L}.$ 
\begin{proposition} Let $\alpha > \dfrac{d}{2}$,
    the operator $-\mathcal{L}$ is self adjoint on $\mathbf{X}_\alpha$ and satisfies:
    \begin{enumerate}
        \item $-\mathcal{L}$ is positive and $Ker(\mathcal{L})=\{cP_\alpha,\quad  c\in \mathbb{R}\},$
        \item $R(\mathcal{L})=\{ g\in \mathbf{X}_\alpha: \quad \int_{\mathbb{R}^d}g(r) dr=0\},$
        \item For all $g\in R(\mathcal{L})$ there exists $f\in \mathcal{D}(\mathcal{L})$ such that $\mathcal{L}(f)=g.$  The solution is unique under the solvability condition $\int_{\mathbb{R}^d}f(r) dr=0.$
    \end{enumerate}
\end{proposition}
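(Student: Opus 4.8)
The plan is to realize $-\mathcal{L}$ as the non-negative self-adjoint operator generated by a closed symmetric bilinear form, and then to read off the kernel, range and solvability from coercivity on the zero-mean subspace. First, exactly as in the computation preceding \eqref{operator-L-def} (which uses \eqref{ground-state}), rewrite $\mathcal{L}(g)=\Div_r\big(P_\alpha(r)\,(I+\tfrac12 A_b(r))\nabla_r g_\alpha\big)$ with $g_\alpha=g(1+\tfrac{|r|^2}{2})^{\alpha}$. Testing against $h$, integrating by parts (legitimate for $g,h\in C_c^\infty$, and extended by density using the polynomial decay of $P_\alpha$), one gets
\[
-(\mathcal{L}f,h)_{\mathbf{X}_\alpha}=\int_{\mathbb{R}^d}P_\alpha(r)\,(I+\tfrac12 A_b(r))\nabla_r f_\alpha\cdot\nabla_r h_\alpha\,dr=:a(f,h).
\]
Since $A_b(r)=(b+1)|r|^2I-br\otimes r$ has eigenvalue $|r|^2$ on $\mathbb{R}r$ and $(b+1)|r|^2$ on $r^\perp$, the matrix $I+\tfrac12 A_b(r)$ has eigenvalues $1+\tfrac{|r|^2}{2}$ and $1+\tfrac{(b+1)|r|^2}{2}$, hence $I\le I+\tfrac12 A_b(r)\le (b+1)(1+\tfrac{|r|^2}{2})I$. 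As $A_b$ is symmetric, $a$ is symmetric and non-negative; the two-sided bound shows that the form norm $\|\cdot\|_{\mathbf{X}_\alpha}^2+a(\cdot,\cdot)$ is equivalent to the norm of $\mathbf{Z}_\alpha$, which is a Hilbert space, so $(a,\mathbf{Z}_\alpha)$ is a densely defined ($C_c^\infty\subset\mathbf{Z}_\alpha$), closed, symmetric, non-negative form.

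By Kato's first representation theorem for such forms, $a$ generates a unique non-negative self-adjoint operator $A$ on $\mathbf{X}_\alpha$ with form domain $\mathbf{Z}_\alpha$ and $\mathcal{D}(A)=\{f\in\mathbf{Z}_\alpha:\exists w\in\mathbf{X}_\alpha,\ a(f,h)=(w,h)_{\mathbf{X}_\alpha}\ \forall h\in\mathbf{Z}_\alpha\}$, $Af=w$. One then identifies $A=-\mathcal{L}$ on the domain $\mathcal{D}(\mathcal{L})$: for $f\in\mathcal{D}(A)$, testing $a(f,h)=(Af,h)_{\mathbf{X}_\alpha}$ against $h\in C_c^\infty$ gives $-\Div_r\big(P_\alpha(I+\tfrac12A_b)\nabla_r f_\alpha\big)=Af$ in $\mathcal{D}'$, i.e.\ $f\in\mathcal{D}(\mathcal{L})$ and $\mathcal{L}f=-Af\in\mathbf{X}_\alpha$. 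The reverse inclusion $\mathcal{D}(\mathcal{L})\subset\mathbf{Z}_\alpha$ is the delicate point: given $f\in\mathbf{X}_\alpha$ with $\mathcal{L}f=g\in\mathbf{X}_\alpha$, multiply by $f_\alpha\chi_R^2$ for a cutoff $\chi_R$, integrate by parts, absorb the commutator terms via $|\nabla\chi_R|=O(1/R)$ and the local uniform ellipticity of $P_\alpha(I+\tfrac12A_b)$, and let $R\to\infty$ to obtain $a(f,f)\le C(\|f\|_{\mathbf{X}_\alpha}+\|g\|_{\mathbf{X}_\alpha})^2<\infty$. This proves $-\mathcal{L}=A$ is self-adjoint and $-\mathcal{L}\ge0$.

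For (1): $\mathcal{L}f=0$ forces $a(f,f)=0$, hence $\nabla_r f_\alpha=0$, so $f_\alpha$ is constant and $f=cP_\alpha$; conversely $P_\alpha\in\mathbf{X}_\alpha$ precisely because $\alpha>\tfrac d2$ (so $\int_{\mathbb{R}^d}P_\alpha\,dr<\infty$) and $\mathcal{L}P_\alpha=0$ by \eqref{ground-state}, whence $\mathrm{Ker}(\mathcal{L})=\mathbb{R}P_\alpha$. For (2)–(3): note that $\int_{\mathbb{R}^d}f\,dr=0$ is equivalent to the vanishing of the mean of $f_\alpha$ against the probability measure $p_\alpha\,dr$. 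A weighted Poincaré inequality for the generalized Cauchy measure $p_\alpha\,dr$ (of Bobkov–Ledoux type — using that the Dirichlet weight $(I+\tfrac12A_b)$ dominates $(1+\tfrac{|r|^2}{2})I$) yields $\|f\|_{\mathbf{X}_\alpha}^2\le C\,a(f,f)$ on the closed subspace $V=\{f\in\mathbf{Z}_\alpha:\int f\,dr=0\}$, so $a$ is bounded and coercive on $V$. Given $g\in\mathbf{X}_\alpha$ with $\int g\,dr=0$, Lax–Milgram provides $f\in V$ with $a(f,h)=-(g,h)_{\mathbf{X}_\alpha}$ for all $h\in V$; writing any $h\in\mathbf{Z}_\alpha$ as $h_0+cP_\alpha$ with $h_0\in V$ and using $a(f,P_\alpha)=0$, $(g,P_\alpha)_{\mathbf{X}_\alpha}=\int g\,dr=0$, the identity extends to all $h\in\mathbf{Z}_\alpha$, so $f\in\mathcal{D}(\mathcal{L})$ and $\mathcal{L}f=g$. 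Conversely, if $g=\mathcal{L}f$ then $\int g\,dr=(\mathcal{L}f,P_\alpha)_{\mathbf{X}_\alpha}=(f,\mathcal{L}P_\alpha)_{\mathbf{X}_\alpha}=0$ by self-adjointness; hence $R(\mathcal{L})=\{g\in\mathbf{X}_\alpha:\int g\,dr=0\}$. Finally, two solutions of $\mathcal{L}f=g$ differ by an element of $\mathrm{Ker}(\mathcal{L})=\mathbb{R}P_\alpha$, and $\int P_\alpha\,dr=Z\neq0$, so the solvability condition $\int f\,dr=0$ singles out a unique solution. The main obstacle is the two items just flagged: the a priori $\mathbf{Z}_\alpha$-regularity needed to identify $\mathcal{D}(\mathcal{L})$ with the form domain, and establishing the weighted Poincaré inequality (with explicit $\alpha,d$-dependence) for the heavy-tailed measure $p_\alpha\,dr$.
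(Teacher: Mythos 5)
Your proof is correct, but it follows a genuinely different route from the paper's. You build $-\mathcal{L}$ from the closed non-negative quadratic form $a(f,h)=\int_{\mathbb{R}^d}P_\alpha (I+\tfrac12 A_b)\nabla_r f_\alpha\cdot\nabla_r h_\alpha\,dr$ on $\mathbf{Z}_\alpha$ via Kato's representation theorem, then identify the form operator with the maximally defined $\mathcal{L}$ by a Caccioppoli-type cutoff (where the quadratic growth of $I+\tfrac12 A_b$ is exactly compensated by $|\nabla\chi_R|^2=O(R^{-2})$ on the annulus, so the argument does close), and you obtain items (2)--(3) constructively: the weighted Poincar\'e inequality for the generalized Cauchy measure (the same inequality the paper proves in Appendix B, but uses only later for the convergence rate) gives coercivity of $a$ on the zero-mean subspace, and Lax--Milgram then solves $\mathcal{L}f=g$ for every mean-zero $g$. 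The paper instead works directly with the maximal operator: it checks symmetry and the positivity bound $(-\mathcal{L}f,f)_{\mathbf{X}_\alpha}\geq\int(1+\tfrac{|r|^2}{2})^{1-\alpha}|\nabla_r f_\alpha|^2dr$, uses Lax--Milgram for $(I-\mathcal{L})$ to place $-1$ in the resolvent set, deduces closedness and then self-adjointness by the Reed--Simon criterion, and identifies $R(\mathcal{L})=\mathrm{Ker}(\mathcal{L})^\perp$ abstractly (declaring (3) obvious). What your approach buys is precisely a proof of the closed-range/solvability statement with an explicit spectral-gap constant, which the abstract identity $R(\mathcal{L})=\mathrm{Ker}(\mathcal{L})^\perp$ alone does not deliver (without a gap it only gives the closure of the range); what the paper's approach buys is brevity, since working with the maximal domain from the start avoids your domain-identification step (though its integration by parts on that domain tacitly rests on the same kind of cutoff justification you spell out). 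One small slip: your displayed chain $I\leq I+\tfrac12A_b\leq (b+1)(1+\tfrac{|r|^2}{2})I$ uses the weaker lower bound; for the equivalence of the form norm with the $\mathbf{Z}_\alpha$-norm you need the sharper bound $I+\tfrac12A_b\geq(1+\tfrac{|r|^2}{2})I$, which your own eigenvalue computation (eigenvalues $1+\tfrac{|r|^2}{2}$ and $1+\tfrac{(b+1)|r|^2}{2}$) already provides and which you invoke correctly later for the Poincar\'e step.
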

\begin{proof} $-\mathcal{L}$ is symmetric on $\mathbf{X}_\alpha$. Indeed,  let $f,g\in \mathcal{D}(\mathcal{L}).$ Since  $A_b(r)^t=A_b(r)$,  we get 
\begin{align*}
    (\mathcal{L}(f),g)_{\mathbf{X}_\alpha}&=-\int_{\mathbb{R}^d}\Big((1+\frac{\vert r\vert^2}{2})^{-\alpha}\big[\nabla_rf_\alpha+\dfrac{1}{2} A_b(r)\nabla_r f_\alpha\big]\Big)\cdot \nabla_rg_\alpha dr\\
    & =-\int_{\mathbb{R}^d}\nabla_rf_\alpha\cdot\Big((1+\frac{\vert r\vert^2}{2})^{-\alpha}\big[\nabla_rg_\alpha+\dfrac{1}{2} A_b(r)\nabla_r g_\alpha\big]\Big) dr=(f,\mathcal{L}(g))_{\mathbf{X}_\alpha}.
\end{align*}
In particular, since $A_b(r)\nabla_r f_\alpha\cdot \nabla_rf_\alpha \geq \vert r\vert^2\vert \nabla_rf_\alpha\vert^2,$ 
we obtain
    \begin{align}
    (-\mathcal{L}(f),f)_{\mathbf{X}_\alpha}&=\int_{\mathbb{R}^d}\Big((1+\frac{\vert r\vert^2}{2})^{-\alpha}\big[\nabla_rf_\alpha+\dfrac{1}{2} A_b(r)\nabla_r f_\alpha\big]\Big)\cdot \nabla_rf_\alpha dr\notag\\
    &=\int_{\mathbb{R}^d}(1+\frac{\vert r\vert^2}{2})^{-\alpha}\big[\vert \nabla_rf_\alpha\vert^2+\dfrac{1}{2} A_b(r)\nabla_r f_\alpha\cdot \nabla_rf_\alpha\big] dr\notag\\
    &\geq \int_{\mathbb{R}^d}(1+\frac{\vert r\vert^2}{2})^{-\alpha}\big[(1+\frac{\vert r\vert^2}{2})\vert \nabla_rf_\alpha\vert^2\big] dr=\int_{\mathbb{R}^d}(1+\frac{\vert r\vert^2}{2})^{-\alpha+1}\vert \nabla_rf_\alpha\vert^2 dr\geq 0.\label{ker-eqn}
\end{align}
Thus $-\mathcal{L}$ is symmetric and positive. On the other hand,  notice that $\mathcal{D}(\mathcal{L})$ is dense in $\mathbf{X}_\alpha$ since $C^\infty_c(\mathbb{R}^d) \subset \mathcal{D}(\mathcal{L}).$ By using Lax-Milgram theorem, we obtain the existence and uniqueness of $h\in \mathbf{Z}_\alpha$ for any $g\in \mathbf{X}_\alpha $ such that 
\begin{align*}
    \int_{\mathbb{R}^d}(1+\frac{\vert r\vert^2}{2})^{-\alpha}\big[\nabla_rh_\alpha+\dfrac{1}{2} A_b(r)\nabla_r h_\alpha\big] \cdot \nabla_r\phi_\alpha dr&+ \int_{\mathbb{R}^d}h\phi(1+\frac{\vert r\vert^2}{2})^{\alpha}dr\\&=\int_{\mathbb{R}^d}g\phi(1+\frac{\vert r\vert^2}{2})^{\alpha}dr,\quad \forall \phi \in \mathbf{Z}_\alpha.
\end{align*}
In other words, $ (I-\mathcal{L})h=g.$ Let us show that $I-\mathcal{L}$ is closed, let $(h_n,(I-\mathcal{L})h_n)$ converges to $(h,g)$ in $\mathbf{X}_\alpha\times \mathbf{X}_\alpha.$ We need to prove that $h\in \mathcal{D}(\mathcal{L})$ and $(I-\mathcal{L})h=g.$ By using Lax-Milgram theorem, there exists a unique $\psi \in \mathbf{Z}_\alpha$ such that $(I-\mathcal{L})\psi=g$ hence $\psi\in \mathcal{D}(\mathcal{L})$. Therefore  $(I-\mathcal{L})(\psi-h_n)$ converges to $0$ in $\mathbf{X}_\alpha$ which ensures that $\psi-h_n$ goes to $0$ in $\mathbf{X}_\alpha$ and $\psi=h \in \mathbf{X}_\alpha.$ We deduce that $-\mathcal{L}$ is also closed and $-1$ belongs to the resolvent of $-\mathcal{L}.$
\begin{enumerate}
    \item From \eqref{ker-eqn}, we get    $Ker(\mathcal{L})=\{cP_\alpha,  c\in \mathbb{R}\}$ since  $f_\alpha=f(1+\frac{\vert r\vert^2}{2})^{\alpha}$.

\item  Since $-\mathcal{L}$ is closed symmetric and $-1$ belongs to the resolvent of $-\mathcal{L}$ then it is self-adjoint thanks to \cite[Cor. p. 137]{ReedSimon75}. By using \cite[Thm. 2.19]{Brezis} we have $$R(\mathcal{L})=Ker(\mathcal{L})^\perp:=\{ g\in \mathbf{X}_\alpha:  (g,P_\alpha)_{\mathbf{X}_\alpha}=0\}=\{ g\in \mathbf{X}_\alpha:  \int_{\mathbb{R}^d}g(r) dr=0\}.$$
\end{enumerate}
The point (3) holds abviously.
\end{proof}
\begin{remark} Notice that in the case
   $0<\alpha\leq d/2$,  we still have 
    $-\mathcal{L}$ is positive. On the other hand, $P_\alpha(r)=(1+\frac{\vert r\vert^2}{2})^{-\alpha} \notin \mathbf{X}_\alpha $ which implies that $Ker(\mathcal{L})=\{0\}.$
\end{remark}

\section{Uniform estimates and continuity equation}\label{section-estimate}
\subsection{Uniform estimates with respect to $\tau$}
Let $\tau>0$ and $\alpha>0,$ the aim of this section is to prove some uniform estimates with respect to $\tau$.  First, we improve the regularity with respect to  the $r-$variable  comparing to the one proved in \cite{FlaTah24} with  a  bound  depending on $\tau$ (see \autoref{lemma-appendix}). Then, we combine \autoref{lemma-appendix} and \eqref{ground-state} to obtain  uniform estimates with respect to $\tau.$ \\

To simplify the notation, let's introduce the following spaces
\begin{align*}
    \mathcal{Y}&=\{\varphi\in H:    \nabla_r\varphi\in H,  \nabla_x\varphi\in  L^2(\T^d\times\R^d)\}.\\
      H_\alpha&=\{ g:\mathbb{T}^d\times\mathbb{R}^d\to \mathbb{R}: \quad \int_{\mathbb{T}^d}\int_{\mathbb{R}^d}g^2(x,r)(1+\frac{\vert r\vert^2}{2})^\alpha drdx<\infty\}\quad \alpha > 0.
\end{align*}
By approximation arguments and \autoref{Thm-scaling-N} we can deduce that   \eqref{Limit-FP-eqn-2} is satisfied  in $\mathcal{Y}^\prime$-sense,  namely there exists a unique  quasi-regular weak solution $f_\tau$ such that 
 \begin{align}\label{regularity-f}
 f_\tau\in L^2_{w-*}(\Omega ;L^\infty([0, T];H))),\quad \nabla_rf_\tau\in L^2(\Omega ;L^2([0, T];H))     
 \end{align}
  and  $f_\tau(t):=f_\tau(t,x,r)$ satisfies P-a.s. for any $t\in [0,T]:$ \begin{align}&\int_{\mathbb{T}^d}\int_{\mathbb{R}^d}f_\tau(t)\phi drdx-\int_{\mathbb{T}^d}\int_{\mathbb{R}^d}f_0^\tau\phi drdx\label{equ-final-I}\\&= \int_0^t\int_{\mathbb{T}^d}\int_{\mathbb{R}^d}f_\tau(s)\left(u_L(s)\cdot \nabla_x \phi+ (\nabla u_L(s)r) \cdot\nabla_r\phi\right) dr dxds\notag\\	&-	\dfrac{1}{\zeta\alpha\tau}\int_0^t\int_{\mathbb{T}^d}\int_{\mathbb{R}^d} \alpha f_\tau(s)r\cdot\nabla_r\phi+\nabla_rf_\tau(s) \cdot\nabla_r\phi  +\dfrac{1}{2}A_b(r) \nabla_rf_\tau(s)\cdot\nabla_r\phi dr dxds, \notag\end{align} 
  for any $\phi \in \mathcal{Y}.$
In order to derive some necessary estimates,
we  introduce  a    regularization  kernel. More  precisely,     let $\delta>0$    and      $\Theta$ be   a smooth  radially symmetric density of a probability measure on
$\R^d$, compactly supported in $B(0, 1)$    and define  the approximation of identity for the convolution on $\R^d$ as  $\Theta_\delta(y)=\dfrac{1}{\delta^d}\Theta(\dfrac{y}{\delta}).$       Since   we  are working on  $\T^d\times \R^d,$  we  recall  that    for any  integrable function $h$    on $\T^d$, $h$ can be extended
periodically to a locally integrable function on the whole $\R^d$   and
convolution $\Theta_\delta * h$ is    meaningful   and  $[h]_\delta:=\Theta_\delta * h$ is still a $C^\infty$-periodic function.  \\

Now, we present the following lemma which serves for the proof of \autoref{propo-estimates}.
\begin{lemma}\label{lemma-appendix}
 There exists   $\mathbf{C}=\dfrac{d/2+10\alpha}{\zeta\tau}+2\alpha\Vert \nabla_xu_L \Vert_\infty$ such that 
\begin{align}
   \sup_{t\in [0, T]} \int_{\mathbb{T}^d}\int_{\mathbb{R}^d}f_\tau(t)^2(1+\frac{\vert r\vert^2}{2})^\alpha  drdx&+\dfrac{1}{2\zeta\tau\alpha}\int_0^T\int_{\mathbb{T}^d}\int_{\mathbb{R}^d}  \vert \nabla_r f_\tau(s)\vert^2(1+\frac{\vert r\vert^2}{2})^{\alpha+1}drdxds\notag\\
&\leq  e^{\mathbf{C}T}\int_{\mathbb{T}^d}\int_{\mathbb{R}^d}(f_0^\tau)^2(1+\frac{\vert r\vert^2}{2})^\alpha  drdx\leq  \mathbf{\Lambda}e^{\mathbf{C}T}. \label{estim-beta-1}\end{align}
\end{lemma}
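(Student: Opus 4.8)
The plan is to run a weighted $L^2$ energy estimate with the unbounded weight $w_\alpha(r):=(1+\frac{|r|^2}{2})^{\alpha}$, testing the weak formulation \eqref{equ-final-I} against $\phi=f_\tau\,w_\alpha$. Since $w_\alpha$ grows polynomially while the quasi-regular weak solution only enjoys the regularity \eqref{regularity-f} (in particular $f_\tau\in H=L^2_{r,2}$), this test function is not directly admissible, so the identity has to be justified by a double regularization: first mollify \eqref{equ-final-I} in the $x$-variable with $[\cdot]_\delta=\Theta_\delta*(\cdot)$ — which closes on $[f_\tau]_\delta$ up to DiPerna--Lions commutators $[u_L\cdot\nabla_x,\Theta_\delta*]f_\tau$ and $[(\nabla u_L)\,\cdot\,]_\delta-(\nabla u_L)[f_\tau]_\delta$ that vanish as $\delta\to0$ because $u_L\in C([0,T];C^2(\mathbb{T}^d))$ — and then insert a radial cutoff $\chi_R$ so that $[f_\tau]_\delta\,w_\alpha\chi_R\in\mathcal{Y}$ becomes admissible. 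One derives the estimate uniformly in $(\delta,R)$ and recovers \eqref{estim-beta-1} by monotone convergence in $R$ and the vanishing of the commutator and cutoff remainders in $\delta$; the chain rule in $t$ giving $\frac{d}{dt}\frac12\|f_\tau\|_{H_\alpha}^2=\langle\partial_t f_\tau, f_\tau w_\alpha\rangle$ is the one already used in \cite{FlaTah24}.

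After the chain rule, one is left with
\begin{align*}
\tfrac12\tfrac{d}{dt}\!\int_{\mathbb{T}^d}\!\!\int_{\mathbb{R}^d}\! f_\tau^2 w_\alpha\,drdx
&=\int_{\mathbb{T}^d}\!\!\int_{\mathbb{R}^d}\! f_\tau\,u_L\!\cdot\!\nabla_x(f_\tau w_\alpha)\,drdx
+\int_{\mathbb{T}^d}\!\!\int_{\mathbb{R}^d}\! f_\tau\,(\nabla u_L r)\!\cdot\!\nabla_r(f_\tau w_\alpha)\,drdx\\
&\quad-\tfrac{1}{\zeta\alpha\tau}\!\int_{\mathbb{T}^d}\!\!\int_{\mathbb{R}^d}\!\Big(\alpha r f_\tau+\nabla_r f_\tau+\tfrac12 A_b(r)\nabla_r f_\tau\Big)\!\cdot\!\nabla_r(f_\tau w_\alpha)\,drdx.
\end{align*}
The first term vanishes: $w_\alpha$ is $x$-independent, so it equals $-\tfrac12\int f_\tau^2(\Div_x u_L)w_\alpha=0$. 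For the stretching term, splitting $\nabla_r(f_\tau w_\alpha)=w_\alpha\nabla_r f_\tau+f_\tau\nabla_r w_\alpha$ and integrating the $\nabla_r f_\tau$-piece by parts in $r$ with $\Div_r(\nabla u_L r)=\operatorname{tr}(\nabla u_L)=\Div_x u_L=0$, it collapses to $\tfrac12\int f_\tau^2(\nabla u_L r)\cdot\nabla_r w_\alpha$; since $\nabla_r w_\alpha=\alpha(1+\frac{|r|^2}{2})^{\alpha-1}r$ and $|(\nabla u_L r)\cdot r|\le\|\nabla_x u_L\|_\infty|r|^2\le 2\|\nabla_x u_L\|_\infty(1+\frac{|r|^2}{2})$, this is bounded by $2\alpha\|\nabla_x u_L\|_\infty\int f_\tau^2 w_\alpha$, which accounts for the $2\alpha\|\nabla_x u_L\|_\infty$ term in $\mathbf{C}$.

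The core is the singular term, handled through the ground-state identity \eqref{ground-state}. Since $\alpha r P_\alpha+\nabla_r P_\alpha+\tfrac12 A_b(r)\nabla_r P_\alpha=0$ with $P_\alpha=w_\alpha^{-1}$, the manipulation preceding \eqref{operator-L-def} gives $\alpha r f_\tau+\nabla_r f_\tau+\tfrac12 A_b(r)\nabla_r f_\tau=w_\alpha^{-1}\big(I+\tfrac12 A_b(r)\big)\nabla_r(f_\tau w_\alpha)$, so the singular term equals $-\tfrac{1}{\zeta\alpha\tau}\int w_\alpha^{-1}\big[|\nabla_r(f_\tau w_\alpha)|^2+\tfrac12 A_b(r)\nabla_r(f_\tau w_\alpha)\cdot\nabla_r(f_\tau w_\alpha)\big]$, which by $A_b(r)\xi\cdot\xi\ge|r|^2|\xi|^2$ is $\le-\tfrac{1}{\zeta\alpha\tau}\int(1+\frac{|r|^2}{2})^{-\alpha+1}|\nabla_r(f_\tau w_\alpha)|^2$. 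Using $\nabla_r(f_\tau w_\alpha)=w_\alpha\big(\nabla_r f_\tau+\alpha(1+\frac{|r|^2}{2})^{-1}r f_\tau\big)$ together with Young's inequality $|a+b|^2\ge\tfrac12|a|^2-|b|^2$, this dominates $\tfrac{1}{2\zeta\alpha\tau}\int(1+\frac{|r|^2}{2})^{\alpha+1}|\nabla_r f_\tau|^2$ up to a remainder of order $\tfrac{\alpha}{\zeta\tau}\int f_\tau^2 w_\alpha$; absorbing this, together with the mollification and cutoff remainders (which carry a further $\tfrac{d+\alpha}{\zeta\tau}$-type weight coming from $\nabla_r$ and $A_b(r)\nabla_r$ hitting $\chi_R$), into the $\int f_\tau^2 w_\alpha$ bound produces the stated $\mathbf{C}=\frac{d/2+10\alpha}{\zeta\tau}+2\alpha\|\nabla_x u_L\|_\infty$. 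Collecting the three contributions yields $\tfrac{d}{dt}\!\int f_\tau^2 w_\alpha+\tfrac{1}{2\zeta\alpha\tau}\!\int(1+\frac{|r|^2}{2})^{\alpha+1}|\nabla_r f_\tau|^2\le\mathbf{C}\int f_\tau^2 w_\alpha$, whence \eqref{estim-beta-1} follows by Grönwall, integration in $t$, and (H$_1$).

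The step I expect to be the real obstacle is making the energy identity rigorous rather than carrying out the algebra: because $w_\alpha$ is unbounded and $f_\tau$ is a priori controlled only in $L^2_{r,2}$, the cutoff $\chi_R$ is unavoidable, and one must show its error terms — where $\nabla_r$ and $A_b(r)\nabla_r$ act on $\chi_R$, producing weights one power of $|r|^2$ larger than $w_\alpha$ — are dominated uniformly in $\delta$ by the bound being proved, so that they close after letting $R\to\infty$; and that the $x$-mollification commutators vanish in the $w_\alpha$-weighted space $L^2([0,T]\times\mathbb{T}^d;H_\alpha)$, not merely in $L^2$. Once the test function is legitimized and the time chain rule is in place, the remainder reduces to the Grönwall argument above.
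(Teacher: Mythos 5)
Your energy computation itself is sound, and its skeleton (mollify in $x$, handle the commutators with \autoref{Lemma-tech-1}, truncate in $r$, Gr\"onwall) is the same as the paper's. But you treat the singular term differently: you route it through the ground-state identity \eqref{ground-state}, obtain the perfect-square dissipation $\frac{1}{\zeta\alpha\tau}\int(1+\frac{|r|^2}{2})^{1-\alpha}|\nabla_r(f_\tau w_\alpha)|^2$, and then downgrade via $|a+b|^2\ge\frac12|a|^2-|b|^2$ to the $\nabla_r f_\tau$-dissipation of \eqref{estim-beta-1} with a remainder $\frac{2\alpha}{\zeta\tau}\int f_\tau^2 w_\alpha$. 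That algebra is correct and yields a constant compatible with the stated $\mathbf{C}$; note, however, that the paper reserves \eqref{ground-state} for the $\tau$-uniform bound of \autoref{propo-estimates} and proves this lemma by expanding the three singular pieces ($\alpha\,\Div_r(f_\tau r)$, $\Delta_r f_\tau$, $\Div_r(A_b\nabla_r f_\tau)$) directly against the truncated weight, using the lower bound $A_b\xi\cdot\xi\ge|r|^2|\xi|^2$ and Young's inequality on the cross terms where the derivative hits the truncation; this is where the $\frac{d/2+10\alpha}{\zeta\tau}$ in $\mathbf{C}$ comes from.

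The genuine gap is exactly the step you flag and then dismiss: legitimizing the test function with a radial support cutoff $\chi_R$ and claiming its error terms are ``dominated uniformly in $\delta$ by the bound being proved.'' That is circular and, as stated, fails. The only a priori information is \eqref{regularity-f}, i.e.\ $f_\tau,\nabla_r f_\tau\in L^2(0,T;H)$ with the quadratic weight $(1+|r|^2)$; the terms where $\nabla_r$ and $A_b(r)\nabla_r$ hit $\chi_R$ live on the annulus $|r|\sim R$ with an effective weight of order $(1+|r|^2)^{\alpha+1}|\nabla\chi_R|\sim R^{2\alpha+1}/R$, so neither sending them to zero as $R\to\infty$ nor absorbing them into the $\chi_R$-weighted dissipation is possible without already knowing $f_\tau\in L^\infty_t H_\alpha$ and the weighted gradient bound — precisely the conclusion of the lemma. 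The paper avoids this by truncating the \emph{weight} rather than the support: with $T_M(r)=\mathcal{T}_M\big((1+\frac{|r|^2}{2})^\alpha\big)$ the test function $T_M[f_\tau]_\delta$ lies in $\mathcal{Y}$, one has $|T_M|\le w_\alpha$ and $|T_M'|\le1$, and the error terms produced by $\nabla_r T_M$ are never required to vanish: they are bounded uniformly in $M$ by a fraction of the (truncated) dissipation plus $C(\alpha)\int f_\tau^2 w_\alpha$ via Young, after which the good terms pass to the limit $M\to\infty$ by Fatou/monotone convergence (see \eqref{est-M-delta} and the appendix proof). Replacing your $\chi_R$ argument by this weight-truncation (or an equivalent absorption-at-truncated-level scheme) is the missing ingredient; with it, your version of the singular-term estimate closes and the lemma follows as you outline.
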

Although the proof of \autoref{lemma-appendix} follows by  classical  arguments,  we give a proof in \autoref{Appendix} for the convenience of the reader.
\begin{remark}($L^2$-estimate)
 It is worth mentioning that we can prove the $L^2$ estimates using arguments similar to the one used in \autoref{Appendix} but the bound depends on $\dfrac{1}{\tau}.$  For the convenience of the reader, let us give just the formal argument. The estimate is based on the analysis of the following 
    \begin{align*}
        &\dfrac{1}{\zeta\alpha\tau}\langle\Div_r\Big(\alpha rf_\tau+\nabla_rf_\tau+\dfrac{1}{2} A_b(r)\nabla_r f_\tau\Big), f_\tau\rangle\\&=-\dfrac{1}{\zeta\alpha\tau}\int_{\mathbb{T}^d}\int_{\mathbb{R}^d}  \Big(\alpha rf_\tau+\nabla_rf_\tau+\dfrac{1}{2} A_b(r)\nabla_r f_\tau\Big)\cdot \nabla_r f_\tau drdx\\
        &= -\dfrac{1}{\zeta\tau}\int_{\mathbb{T}^d}\int_{\mathbb{R}^d}   rf_\tau\cdot \nabla_r f_\tau drdx-\dfrac{1}{\zeta\alpha\tau}\int_{\mathbb{T}^d}\int_{\mathbb{R}^d}\overbrace{\big(\vert \nabla_rf_\tau\vert^2+\dfrac{1}{2} A_b(r)\nabla_r f_\tau\cdot \nabla_r f_\tau\big)}^{\geq 0} drdx\\
      &=  \dfrac{d}{2\zeta\tau}\int_{\mathbb{T}^d}\int_{\mathbb{R}^d}   f_\tau^2 drdx-\dfrac{1}{\zeta\alpha\tau}\int_{\mathbb{T}^d}\int_{\mathbb{R}^d}\overbrace{\big(\vert \nabla_rf_\tau\vert^2+\dfrac{1}{2} A_b(r)\nabla_r f_\tau\cdot \nabla_r f_\tau\big)}^{\geq 0} drdx.
    \end{align*}
This is why we consider the space $H_\alpha$ to obtain uniform bound with respect to $\tau.$
\end{remark}

\begin{proposition}\label{propo-estimates}
 Let $\alpha> 0$ and assume  that (H$_1$) holds. There exists $\mathbf{K}=2\alpha\Vert \nabla_xu_L \Vert_\infty$  such that
\begin{align}
    \sup_{t\in [0, T]} \Vert f_\tau(t)\Vert_{H_\alpha}^2
    &+\dfrac{1}{\zeta\alpha\tau}\int_0^T\int_{\mathbb{T}^d}\int_{\mathbb{R}^d}\vert \nabla_r (f_\tau(s)P_{-\alpha})\vert^2(1+\frac{\vert r\vert^2}{2})^{1-\alpha} dr dxds\leq  e^{\mathbf{K}T}\mathbf{\Lambda}:= \overline{\mathbf{K}}. \label{key-estimate-beta}
\end{align}
\end{proposition}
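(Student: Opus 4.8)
The plan is a weighted energy estimate for \eqref{equ-final-I}, testing against the solution weighted by the Cauchy weight, i.e. against (a time‑dependent, suitably truncated version of) $\phi=g_\tau:=f_\tau P_{-\alpha}=f_\tau(1+\tfrac{\vert r\vert^2}{2})^{\alpha}$. The point — and the reason the resulting constant is independent of $\tau$, in contrast with the $e^{\mathbf{C}T}$ of \autoref{lemma-appendix} — is to push the singular term through the factorisation recorded in and just below \eqref{ground-state},
\begin{align*}
\alpha r f_\tau+\nabla_r f_\tau+\tfrac12 A_b(r)\nabla_r f_\tau=(1+\tfrac{\vert r\vert^2}{2})^{-\alpha}\Big[\nabla_r g_\tau+\tfrac12 A_b(r)\nabla_r g_\tau\Big],\qquad g_\tau=f_\tau(1+\tfrac{\vert r\vert^2}{2})^{\alpha},
\end{align*}
so that the $O(\tfrac1\tau)$ term becomes a genuine (sign‑definite) dissipation, with no spurious lower‑order leftover. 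Before the algebra I would record, using \autoref{lemma-appendix}, that $f_\tau\in L^\infty(0,T;H_\alpha)$ and $\nabla_rf_\tau\in L^2\big(0,T;L^2((1+\tfrac{\vert r\vert^2}{2})^{\alpha+1}drdx)\big)$ (with a $\tau$‑dependent bound), hence also $\nabla_r g_\tau\in L^2\big(0,T;L^2((1+\tfrac{\vert r\vert^2}{2})^{1-\alpha}drdx)\big)$; this is what makes $g_\tau$ admissible as a test function after the regularisations described below.

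\textbf{Main computation.} Testing \eqref{equ-final-I} against $\phi=g_\tau(t)$, the time derivative produces $\tfrac12\tfrac{d}{dt}\Vert f_\tau\Vert_{H_\alpha}^2$. The $x$‑transport term vanishes, since $\Div_x u_L=0$ and the weight is $x$‑independent: $\int_{\T^d}\!\int_{\R^d} u_L\cdot\nabla_x(f_\tau^2)\,(1+\tfrac{\vert r\vert^2}{2})^{\alpha}=0$. For the stretching term one integrates by parts twice in $r$; the contribution of $\Div_r(\nabla u_L r)=\mathrm{tr}(\nabla u_L)=\Div_x u_L=0$ drops, leaving $\tfrac{\alpha}{2}\int_{\T^d}\!\int_{\R^d} f_\tau^2(1+\tfrac{\vert r\vert^2}{2})^{\alpha-1}(\nabla u_L r\cdot r)$, which by $\vert\nabla u_L r\cdot r\vert\le\Vert\nabla_x u_L\Vert_\infty\vert r\vert^2\le 2\Vert\nabla_x u_L\Vert_\infty(1+\tfrac{\vert r\vert^2}{2})$ is bounded in modulus by $\alpha\Vert\nabla_x u_L\Vert_\infty\Vert f_\tau\Vert_{H_\alpha}^2$. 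Using the factorisation above, the penalised term equals
\begin{align*}
-\frac{1}{\zeta\alpha\tau}\int_{\T^d}\!\int_{\R^d}(1+\tfrac{\vert r\vert^2}{2})^{-\alpha}\Big[\vert\nabla_r g_\tau\vert^2+\tfrac12 A_b(r)\nabla_r g_\tau\cdot\nabla_r g_\tau\Big]\,drdx,
\end{align*}
and since the least eigenvalue of $A_b(r)$ is $\vert r\vert^2$ (so $A_b(r)v\cdot v\ge \vert r\vert^2\vert v\vert^2$) the bracket dominates $(1+\tfrac{\vert r\vert^2}{2})\vert\nabla_r g_\tau\vert^2$; thus the penalised term is $\le -\tfrac{1}{\zeta\alpha\tau}\int_{\T^d}\!\int_{\R^d}(1+\tfrac{\vert r\vert^2}{2})^{1-\alpha}\vert\nabla_r g_\tau\vert^2$. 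Collecting these bounds (and keeping only coefficient $1$ in front of the dissipation),
\begin{align*}
\frac{d}{dt}\Vert f_\tau(t)\Vert_{H_\alpha}^2+\frac{1}{\zeta\alpha\tau}\int_{\T^d}\!\int_{\R^d}(1+\tfrac{\vert r\vert^2}{2})^{1-\alpha}\big\vert\nabla_r\big(f_\tau(t)P_{-\alpha}\big)\big\vert^2\,drdx\le \mathbf{K}\,\Vert f_\tau(t)\Vert_{H_\alpha}^2,\qquad \mathbf{K}=2\alpha\Vert\nabla_x u_L\Vert_\infty.
\end{align*}
Multiplying by $e^{-\mathbf{K}t}$, integrating on $[0,t]$ and using $e^{\mathbf{K}t}e^{-\mathbf{K}s}\ge 1$ for $s\le t$ together with (H$_1$), one obtains $\Vert f_\tau(t)\Vert_{H_\alpha}^2+\tfrac{1}{\zeta\alpha\tau}\int_0^t\!\int_{\T^d}\!\int_{\R^d}(1+\tfrac{\vert r\vert^2}{2})^{1-\alpha}\vert\nabla_r(f_\tau P_{-\alpha})\vert^2\le e^{\mathbf{K}t}\Vert f_0^\tau\Vert_{H_\alpha}^2\le e^{\mathbf{K}T}\mathbf{\Lambda}$ for every $t\in[0,T]$, which is \eqref{key-estimate-beta}.

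\textbf{Main obstacle.} Everything above is elementary algebra once the factorisation below \eqref{ground-state} and the coercivity $A_b(r)v\cdot v\ge\vert r\vert^2\vert v\vert^2$ are in hand; the only genuinely delicate point is the rigorous justification of the test function $g_\tau=f_\tau P_{-\alpha}$, which is \emph{not} a priori in $\mathcal{Y}$ because the weight $P_{-\alpha}$ grows polynomially in $r$. I would therefore carry out the estimate with the truncated weight $(1+\tfrac{\vert r\vert^2}{2})^{\alpha}\wedge M$ (or $(1+\tfrac{\vert r\vert^2}{2})^{\alpha}\chi(r/M)$) and the spatial mollification $[\,\cdot\,]_\delta$ exactly as in \autoref{Appendix}, then let $\delta\to 0$ and $M\to+\infty$, controlling the cut‑off/commutator remainders by the a priori bounds of \autoref{lemma-appendix}; this is routine but must be written out, and it is the place where the finite weighted regularity of a quasi‑regular weak solution is actually used.
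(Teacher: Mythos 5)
Your proposal is correct and follows essentially the same route as the paper: the factorisation below \eqref{ground-state} turns the $O(1/\tau)$ term into a sign-definite dissipation, the coercivity $A_b(r)v\cdot v\ge\vert r\vert^2\vert v\vert^2$ yields the $(1+\tfrac{\vert r\vert^2}{2})^{1-\alpha}$-weighted gradient term, the stretching term gives $\mathbf{K}=2\alpha\Vert\nabla_x u_L\Vert_\infty$, and Gronwall concludes. The rigorous justification you sketch (testing with $T_M(r)[f_\tau]_\delta$, letting $\delta\to0$ then $M\to+\infty$, and controlling the cut-off remainder via the $\tau$-dependent bound of \autoref{lemma-appendix} together with dominated convergence and the commutator estimate) is exactly what the paper carries out.
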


    \begin{proof}
Let $t\in [0,T]$ and      $\phi\in \mathcal{Y}$, if  we  denote  $X=(x,r)\in \T^d\times  \R^d$   and $\Theta_\delta(X)=\Theta_\delta(x)$,
then    $\phi_{\delta}:=\Theta_\delta*\phi$   is  an  appropriate test    function    in  \eqref{equ-final-I}, a classical computation yields
	\begin{align}\label{eqn-regulari-test}&\int_{\mathbb{T}^d}\int_{\mathbb{R}^d}[f_\tau(t)]_\delta\phi drdx-\int_{\mathbb{T}^d}\int_{\mathbb{R}^d}[f_0^\tau]_\delta\phi drdx\\&= -\int_0^t\int_{\mathbb{T}^d}\int_{\mathbb{R}^d}[u_L(s)\cdot \nabla_xf_\tau(s)]_\delta \phi+ [(\nabla u_L(s)r) \cdot\nabla_rf_\tau(s)]_\delta\phi dr dxds\notag\\	&+	\dfrac{1}{\zeta\alpha\tau}\int_0^t\int_{\mathbb{T}^d}\int_{\mathbb{R}^d} \alpha [\Div_r( f_\tau(s)r)]_\delta\phi dr dxds\notag \\&+\dfrac{1}{\zeta\alpha\tau}\int_0^t\langle[\Delta_rf_\tau(s) ]_\delta,\phi  \rangle+\dfrac{1}{2}\langle[\Div_r(A_b(r) \nabla_rf_\tau(s))]_\delta,\phi \rangle ds, \quad \forall \phi \in \mathcal{Y},\notag\end{align}
    where $\langle \cdot,\cdot\rangle$ denotes the duality pairing between $\mathcal{Y}^\prime$ and $\mathcal{Y}.$  Let us  introduce a  sequence of cut-off function $\mathcal{T}_M(s)=\max(-M,\min(s,M)), M\in \mathbb{N}$ and define $$T_M(r):=\mathcal{T}_M\big((1+\frac{\vert r\vert^2}{2})^\alpha\big),\quad r\in \mathbb{R}^d,\quad d=2,3.$$
 Recall that  $ \vert T_M(r)\vert \leq (1+\frac{\vert r\vert^2}{2})^\alpha$ and  $T_M(r)\uparrow (1+\frac{\vert r\vert^2}{2})^\alpha $ as $M\uparrow +\infty.$
 Let $\delta>0, M \in \mathbb{N},$
 notice that $\phi_{\delta,M}(x,r)= T_M(r)[f_\tau(\cdot)]_\delta(x,r)$ is an appropriate test function in \eqref{eqn-regulari-test}, therefore  we obtain
   \begin{align}&\int_{\mathbb{T}^d}\int_{\mathbb{R}^d}[f_\tau(t)]_\delta^2 T_M drdx-\int_{\mathbb{T}^d}\int_{\mathbb{R}^d}[f_0^\tau]_\delta^2 T_M drdx\label{est-M-delta}\\&= -\int_0^t\int_{\mathbb{T}^d}\int_{\mathbb{R}^d}[u_L(s)\cdot \nabla_xf_\tau(s)]_\delta  T_M[f_\tau(s)]_\delta+ [(\nabla u_L(s)r) \cdot\nabla_rf_\tau(s)]_\delta T_M[f_\tau(s)]_\delta dr dxds\notag\\	&+	\dfrac{1}{\zeta\alpha\tau}\int_0^t\int_{\mathbb{T}^d}\int_{\mathbb{R}^d} \alpha [\Div_r( f_\tau(s)r)]_\delta T_M[f_\tau(s)]_\delta dr dxds+\dfrac{1}{\zeta\alpha\tau}\int_0^t\langle [\Delta_rf_\tau(s) ]_\delta, T_M[f_\tau(s)]_\delta \rangle ds\notag\\ & +\dfrac{1}{\zeta\alpha\tau}\int_0^t\langle\dfrac{1}{2}[\Div_r(A_b(r) \nabla_rf_\tau(s))]_\delta, T_M[f_\tau(s)]_\delta \rangle ds.\notag\end{align}
Now, we use \eqref{estim-beta-1}  to prove  some uniform estimates independent of $\tau$. For the analysis of the term independent of $\tau$, see the first part of the proof of \autoref{lemma-appendix} in \autoref{Appendix}.  We will discuss only the $\tau$ dependent terms.
 \subsubsection{Uniform estimates with respect to $\tau$}\label{subsection-uniform-beta}
Note that
\begin{align*}
   & \dfrac{1}{\zeta\alpha\tau}\int_0^t\int_{\mathbb{T}^d}\int_{\mathbb{R}^d} \alpha [\Div_r( f_\tau(s)r)]_\delta T_M[f_\tau(s)]_\delta dr dxds+\dfrac{1}{\zeta\alpha\tau}\int_0^t\langle [\Delta_rf_\tau(s) ]_\delta, T_M[f_\tau(s)]_\delta \rangle ds\notag\\ & +\dfrac{1}{\zeta\alpha\tau}\int_0^t\langle\dfrac{1}{2}[\Div_r(A_b(r) \nabla_rf_\tau(s))]_\delta, T_M[f_\tau(s)]_\delta \rangle ds\\
   &=-\dfrac{1}{\zeta\alpha\tau}\int_0^t\int_{\mathbb{T}^d}\int_{\mathbb{R}^d} \big(\alpha r [f_\tau(s)]_\delta +\nabla_r [f_\tau(s)]_\delta+\dfrac{1}{2}A_b(r) \nabla_r[f_\tau(s)]_\delta\big)\cdot\nabla_r(T_M[f_\tau(s)]_\delta) dr dxds.
\end{align*}
By using \eqref{ground-state}, we obtain
\begin{align*}
    &\alpha r [f_\tau(s)]_\delta +\nabla_r [f_\tau(s)]_\delta+\dfrac{1}{2}A_b(r) \nabla_r[f_\tau(s)]_\delta\\&=\nabla_r \Big([f_\tau(s)]_\delta(1+\frac{\vert r\vert^2}{2})^\alpha\Big)(1+\frac{\vert r\vert^2}{2})^{-\alpha}+\dfrac{1}{2}A_b(r) \nabla_r\Big([f_\tau(s)]_\delta(1+\frac{\vert r\vert^2}{2})^\alpha\Big)(1+\frac{\vert r\vert^2}{2})^{-\alpha}.
\end{align*}
Therefore, we have
\begin{align*}
   I_\delta^M:=&-\dfrac{1}{\zeta\alpha\tau}\int_0^t\int_{\mathbb{T}^d}\int_{\mathbb{R}^d} \big(\alpha r [f_\tau(s)]_\delta +\nabla_r [f_\tau(s)]_\delta+\dfrac{1}{2}A_b(r) \nabla_r[f_\tau(s)]_\delta\big)\cdot\nabla_r(T_M[f_\tau(s)]_\delta) dr dxds\\
   &=-\dfrac{1}{\zeta\alpha\tau}\int_0^t\int_{\mathbb{T}^d}\int_{\mathbb{R}^d}\nabla_r ([f_\tau(s)]_\delta(1+\frac{\vert r\vert^2}{2})^\alpha)(1+\frac{\vert r\vert^2}{2})^{-\alpha}\cdot\nabla_r(T_M[f_\tau(s)]_\delta) dr dxds\\
   &-\dfrac{1}{\zeta\alpha\tau}\int_0^t\int_{\mathbb{T}^d}\int_{\mathbb{R}^d}\dfrac{1}{2}A_b(r) \nabla_r([f_\tau(s)]_\delta(1+\frac{\vert r\vert^2}{2})^\alpha)(1+\frac{\vert r\vert^2}{2})^{-\alpha}\cdot\nabla_r(T_M[f_\tau(s)]_\delta) dr dxds.
\end{align*}
By using \eqref{regularity-f}, it is possible  to let $\delta \to 0$  in the last equality. Moreover, notice that
\begin{align*}
    \nabla_r(T_Mf_\tau)=\nabla_r((1+\frac{\vert r\vert^2}{2})^\alpha f_\tau)T_M(1+\frac{\vert r\vert^2}{2})^{-\alpha}+\nabla_r(T_M(1+\frac{\vert r\vert^2}{2})^{-\alpha})(1+\frac{\vert r\vert^2}{2})^\alpha f_\tau
\end{align*}
thus
\begin{align*}
    \lim_{\delta\to 0} I_\delta^M&=-\dfrac{1}{\zeta\alpha\tau}\int_0^t\int_{\mathbb{T}^d}\int_{\mathbb{R}^d}\vert \nabla_r (f_\tau(s)(1+\frac{\vert r\vert^2}{2})^\alpha)\vert^2(1+\frac{\vert r\vert^2}{2})^{-\alpha}T_M(1+\frac{\vert r\vert^2}{2})^{-\alpha} dr dxds\\
    &-\dfrac{1}{\zeta\alpha\tau}\int_0^t\int_{\mathbb{T}^d}\int_{\mathbb{R}^d} (I+\dfrac{1}{2}A_b(r))\nabla_r (f_\tau(s)(1+\frac{\vert r\vert^2}{2})^\alpha)\cdot \nabla_r(T_M(1+\frac{\vert r\vert^2}{2})^{-\alpha})f_\tau(s) dr dxds\\
   &\hspace{-0.5cm}-\dfrac{1}{\zeta\alpha\tau}\int_0^t\int_{\mathbb{T}^d}\int_{\mathbb{R}^d}\dfrac{1}{2}A_b(r) \nabla_r(f_\tau(s)(1+\frac{\vert r\vert^2}{2})^\alpha)\cdot\nabla_r(f_\tau(s)(1+\frac{\vert r\vert^2}{2})^\alpha)  T_M(1+\frac{\vert r\vert^2}{2})^{-2\alpha}dr dxds
\end{align*}
Concerning the first and the third terms, we have 
\begin{align*}
    &J^M:=\dfrac{1}{\zeta\alpha\tau}\int_0^t\int_{\mathbb{T}^d}\int_{\mathbb{R}^d}\vert \nabla_r (f_\tau(s)(1+\frac{\vert r\vert^2}{2})^\alpha)\vert^2(1+\frac{\vert r\vert^2}{2})^{-\alpha}T_M(1+\frac{\vert r\vert^2}{2})^{-\alpha} dr dxds\\
   &+\dfrac{1}{2\zeta\alpha\tau}\int_0^t\int_{\mathbb{T}^d}\int_{\mathbb{R}^d}A_b(r) \nabla_r(f_\tau(s)(1+\frac{\vert r\vert^2}{2})^\alpha)\cdot\nabla_r(f_\tau(s)(1+\frac{\vert r\vert^2}{2})^\alpha)  T_M(1+\frac{\vert r\vert^2}{2})^{-2\alpha}dr dxds\\
   &\geq \dfrac{1}{\zeta\alpha\tau}\int_0^t\int_{\mathbb{T}^d}\int_{\mathbb{R}^d}\vert \nabla_r (f_\tau(s)(1+\frac{\vert r\vert^2}{2})^\alpha)\vert^2(1+\frac{\vert r\vert^2}{2})^{1-\alpha}T_M(1+\frac{\vert r\vert^2}{2})^{-\alpha} dr dxds,
\end{align*}
hence, by using Fatou's lemma (see \textit{e.g.} \cite[Lemma 4.1]{Brezis}) we get 
\begin{align*}
    \liminf_{M\to +\infty}J^M
   &\geq \dfrac{1}{\zeta\alpha\tau}\int_0^t\int_{\mathbb{T}^d}\int_{\mathbb{R}^d}\vert \nabla_r (f_\tau(s)(1+\frac{\vert r\vert^2}{2})^\alpha)\vert^2(1+\frac{\vert r\vert^2}{2})^{1-\alpha} dr dxds.
\end{align*}
Finally, we use \eqref{estim-beta-1} to show that $\displaystyle\lim_{M\to +\infty} \vert R^M\vert=0$ where
\begin{align*}
    R^M=
    &-\dfrac{1}{\zeta\alpha\tau}\int_0^t\int_{\mathbb{T}^d}\int_{\mathbb{R}^d} (I+\dfrac{1}{2}A_b(r))\nabla_r (f_\tau(s)(1+\frac{\vert r\vert^2}{2})^\alpha)\cdot \nabla_r(T_M(1+\frac{\vert r\vert^2}{2})^{-\alpha})f_\tau(s) dr dxds.
\end{align*}
Notice that $T_M(r)(1+\frac{\vert r\vert^2}{2})^{-\alpha}=\widetilde{T}_M(r)\circ (1+\frac{\vert r\vert^2}{2})^{\alpha},
$ where $\widetilde{T}_M(s)=\dfrac{T_M(s)}{s}, s>0$ thus
\begin{align}\label{eqn-cut-vanish}
    \nabla_r(T_M(r)(1+\frac{\vert r\vert^2}{2})^{-\alpha})= -\alpha rM(1+\frac{\vert r\vert^2}{2})^{-\alpha-1}1_{ \{  (1+\frac{\vert r\vert^2}{2})^{\alpha} >M\}}(r)
\end{align}
in a weak sense. On the other hand, we have
\begin{align*}
    \vert -\alpha rM(1+\frac{\vert r\vert^2}{2})^{-\alpha-1}1_{ \{  (1+\frac{\vert r\vert^2}{2})^{\alpha} >M\}}(r)\vert &\leq 2\alpha M(1+\frac{\vert r\vert^2}{2})^{-\alpha-\frac{1}{2}}1_{ \{  (1+\frac{\vert r\vert^2}{2})^{\alpha} >M\}}(r)\\&\leq  2\alpha (1+\frac{\vert r\vert^2}{2})^{-\frac{1}{2}}1_{ \{  (1+\frac{\vert r\vert^2}{2})^{\alpha} >M\}}(r)
\end{align*}
and consequently
\begin{align*}
    \vert R^M\vert \leq
    &\dfrac{2}{\zeta\tau}\int_0^t\int_{\mathbb{T}^d}\int_{\mathbb{R}^d} \hspace{-0.3cm}\vert (I+\dfrac{1}{2}A_b(r))\nabla_r (f_\tau(s)(1+\frac{\vert r\vert^2}{2})^\alpha)\vert f_\tau(s)   (1+\frac{\vert r\vert^2}{2})^{-\frac{1}{2}}1_{ \{ (1+\frac{\vert r\vert^2}{2})^{\alpha} >M\}}(r) dr dxds.
\end{align*}

Since 
\begin{align*}
\lim_{M\to +\infty}&\vert (I+\dfrac{1}{2}A_b(r))\nabla_r (f_\tau(s)(1+\frac{\vert r\vert^2}{2})^\alpha)\vert f_\tau(s)   (1+\frac{\vert r\vert^2}{2})^{-\frac{1}{2}}1_{ \{ (1+\frac{\vert r\vert^2}{2})^{\alpha} >M\}}(r)\\&=0  \quad  \text{ a.e. }(x,r,s)\in \mathbb{T}^d\times \mathbb{R}^d\times (0,T).    
\end{align*}
By using \eqref{estim-beta-1}, we obtain the following dominated function
\begin{align*}
 \mathbf{y}=   \vert (I+\dfrac{1}{2}A_b(r))\nabla_r (f_\tau(s)(1+\frac{\vert r\vert^2}{2})^\alpha)\vert f_\tau(s)(1+\frac{\vert r\vert^2}{2})^{-\frac{1}{2}}\in  L^1(\mathbb{R}^d\times \mathbb{T}^d\times (0,T)),
\end{align*}
Lebesgue dominated convergence theorem (see \textit{e.g.} \cite[Theorem 4.2]{Brezis}) ensures that  $\displaystyle\lim_{M\to +\infty} \vert R^M\vert=0.$ Thus
 \begin{align*}
    &\int_{\mathbb{T}^d}\int_{\mathbb{R}^d}f_\tau(t)^2(1+\frac{\vert r\vert^2}{2})^\alpha  drdx
    +\dfrac{1}{\zeta\alpha\tau}\int_0^t\int_{\mathbb{T}^d}\int_{\mathbb{R}^d}\vert \nabla_r (f_\tau(s)(1+\frac{\vert r\vert^2}{2})^\alpha)\vert^2(1+\frac{\vert r\vert^2}{2})^{1-\alpha} dr dxds\\
&\leq  \int_{\mathbb{T}^d}\int_{\mathbb{R}^d}(f_0^\tau)^2(1+\frac{\vert r\vert^2}{2})^\alpha  drdx+2\alpha\Vert \nabla_xu_L \Vert_\infty \int_0^t\int_{\mathbb{T}^d}\int_{\mathbb{R}^d} f_\tau^2(s)(1+\frac{\vert r\vert^2}{2})^{\alpha} drdxds.
\end{align*}
Set $\mathbf{K}=2\alpha\Vert \nabla_xu_L \Vert_\infty$ and apply Gronwall inequality to deduce 
\begin{align*}
    &\sup_{t\in [0, T]}\int_{\mathbb{T}^d}\int_{\mathbb{R}^d}f_\tau(t)^2(1+\frac{\vert r\vert^2}{2})^\alpha  drdx
    \\&+\dfrac{1}{\zeta\alpha\tau}\int_0^T\int_{\mathbb{T}^d}\int_{\mathbb{R}^d}\vert \nabla_r (f_\tau(s)(1+\frac{\vert r\vert^2}{2})^\alpha)\vert^2(1+\frac{\vert r\vert^2}{2})^{1-\alpha} dr dxds\leq  e^{\mathbf{K}T}\int_{\mathbb{T}^d}\int_{\mathbb{R}^d}(f_0^\tau)^2(1+\frac{\vert r\vert^2}{2})^\alpha  drdx.
\end{align*}
    \end{proof}
     \subsection{The continuity equation}
     Let $\alpha>\dfrac{d}{2}$ and  $\tau>0$, after integration "formally" with respect to the variable $r$ in  \eqref{Limit-FP-eqn-2},  set  $$\rho_\tau=\rho_\tau(t,x):=\int_{\mathbb{R}^d} f_\tau(t,x,r)dr$$ and 
      consider the following continuity equation 
    \begin{align}\label{continuity-eqn}
\begin{cases}&\partial_t \rho_\tau+u_L\cdot \nabla_x \rho_\tau=0\\
&\rho_\tau(x)|_{t=0}=\rho_0^\tau=\int_{\mathbb{R}^d}f_0^\tau(x,r)dr.
\end{cases}\end{align}
 Under the assumption $H_1$, we have  $(\rho_0^\tau)_\tau$ is bounded in $L^2(\mathbb{T}^d).$
 \begin{proposition}\label{prop-exit-cont}
 Assume that  $H_1$ holds,    there exists a unique  solution $\rho_\tau\in C([0,T];L^2(\mathbb{T}^d))$ to  \eqref{continuity-eqn} in the following sense
    \begin{align}\label{sense-cont}
        \int_{\mathbb{T}^d} \rho_\tau(t)\varphi dx-      \int_0^t\int_{\mathbb{T}^d} \rho_\tau(s)u_L\cdot \nabla\varphi dxds=\int_{\mathbb{T}^d}\rho_0^\tau\varphi dx, \qquad \forall \varphi \in H^1(\mathbb{T}^d).
    \end{align}
    Moreover, $(\rho_\tau)_\tau$ is bounded in $L^\infty(0,T;L^2(\mathbb{T}^d))$.
 \end{proposition}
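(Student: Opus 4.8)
The plan is to read \eqref{continuity-eqn} as a linear continuity equation driven by the smooth divergence-free field $u_L$, so that well-posedness reduces to the classical theory (DiPerna--Lions, or equivalently the method of characteristics); the only genuinely equation-specific input is a uniform bound on the initial marginal $\rho_0^\tau$, which is where the estimate from $H_1$ enters.

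First I would record the control on the data coming from $H_1$. By Cauchy--Schwarz in the $r$-variable,
\[
\|\rho_0^\tau\|_{L^2(\mathbb{T}^d)}^2=\int_{\mathbb{T}^d}\Big|\int_{\mathbb{R}^d}f_0^\tau(x,r)\,dr\Big|^2dx\le\Big(\int_{\mathbb{R}^d}(1+\tfrac{|r|^2}{2})^{-\alpha}dr\Big)\,\|f_0^\tau\|_{H_\alpha}^2=Z\,\|f_0^\tau\|_{H_\alpha}^2\le Z\mathbf{\Lambda},
\]
the finiteness of $Z$ being exactly where the hypothesis $\alpha>d/2$ is used; in particular $(\rho_0^\tau)_\tau$ is bounded in $L^2(\mathbb{T}^d)$, hence in $L^1(\mathbb{T}^d)$ since $|\mathbb{T}^d|<\infty$.

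Next, for existence and the regularity $\rho_\tau\in C([0,T];L^2(\mathbb{T}^d))$ I would use that $u_L\in C([0,T];C^2(\mathbb{T}^d;\mathbb{R}^d))$ with $\Div_x u_L=0$ generates a flow of measure-preserving $C^2$-diffeomorphisms $\Phi_t$ of $\mathbb{T}^d$: for smooth data $\rho_0$ the function $(t,x)\mapsto\rho_0(\Phi_t^{-1}(x))$ solves \eqref{continuity-eqn} classically (transport and continuity coincide when the drift is divergence-free), it is strongly continuous in time with values in $L^2$, and the measure-preserving property yields the conservation $\|\rho(t)\|_{L^2(\mathbb{T}^d)}=\|\rho_0\|_{L^2(\mathbb{T}^d)}$. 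Approximating $\rho_0^\tau$ by smooth data in $L^2(\mathbb{T}^d)$ and passing to the limit along this $L^2$-isometry gives $\rho_\tau\in C([0,T];L^2(\mathbb{T}^d))$ satisfying the weak formulation \eqref{sense-cont}, together with $\sup_{t\in[0,T]}\|\rho_\tau(t)\|_{L^2(\mathbb{T}^d)}=\|\rho_0^\tau\|_{L^2(\mathbb{T}^d)}\le\sqrt{Z\mathbf{\Lambda}}$, which is the claimed uniform $L^\infty(0,T;L^2)$ bound. For uniqueness I would argue exactly as for \eqref{transport-measure}, invoking the torus version of DiPerna--Lions (\cite[Corollary II.1 and Theorem II.3]{diperna1989ordinary}): the difference $w$ of two solutions is a weak, hence renormalized, solution with zero initial datum, so that $\partial_t(w^2)+\Div_x(u_L w^2)=0$ in $\mathcal{D}'$; integrating in $x$ and using $\Div_x u_L=0$ gives $\tfrac{d}{dt}\|w(t)\|_{L^2(\mathbb{T}^d)}^2=0$, forcing $w\equiv0$.

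Since $u_L$ is $C^2$ there is no serious obstacle: the ODE flow is classically well defined and the commutator/renormalization step is standard. The only point requiring a little care is the first step — converting the weighted kinetic bound on $f_0^\tau$ into an $L^2(\mathbb{T}^d)$ bound on its $r$-marginal — and it is precisely there that the restriction $\alpha>d/2$ is essential.
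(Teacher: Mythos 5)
Your proposal is correct, and the only equation-specific input --- the bound $\|\rho_0^\tau\|_{L^2(\mathbb{T}^d)}^2\le Z\,\|f_0^\tau\|_{H_\alpha}^2\le Z\mathbf{\Lambda}$ by Cauchy--Schwarz in $r$, which is exactly where $\alpha>\frac{d}{2}$ enters --- matches what the paper uses implicitly when it asserts that $(\rho_0^\tau)_\tau$ is bounded in $L^2(\mathbb{T}^d)$ (the same computation appears explicitly in the proof of \autoref{lem-cont-limit}). Where you diverge is in the machinery: the paper builds existence by a Galerkin approximation shown to be Cauchy in $C([0,T];L^2(\mathbb{T}^d))$ (citing \cite{diperna1989ordinary} only as an alternative), whereas you use the measure-preserving $C^2$ flow of $u_L$ and the $L^2$-isometry $\rho_0\mapsto\rho_0\circ\Phi_t^{-1}$, which is perfectly legitimate here since $u_L\in C([0,T];C^2)$ and also hands you the bound $\sup_t\|\rho_\tau(t)\|_{L^2}=\|\rho_0^\tau\|_{L^2}$ for free. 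For uniqueness and the uniform bound the paper does not invoke renormalization abstractly; it proves a quantitative stability estimate (\autoref{Lemma-uniq-cont}) by mollifying in $x$ and using the commutator estimate of \autoref{Lemma-tech-1}, which is then reused later to get $\rho_\tau\to\rho$ in \autoref{lem-cont-limit}. Your appeal to DiPerna--Lions renormalization is essentially the same argument in packaged form; the one point to phrase carefully is that $\beta(s)=s^2$ is not in the bounded-derivative class of \autoref{prop-transport-equation-1}, so you should either truncate $\beta$ or note that for a Lipschitz (here $C^2$) divergence-free drift the commutator/mollification argument applies directly to $L^\infty_tL^2_x$ solutions --- which is precisely what the paper's stability lemma carries out. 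In short: correct, slightly different implementation (characteristics plus renormalization versus Galerkin plus an explicit commutator stability lemma), with the paper's choice having the side benefit of producing the stability inequality \eqref{stability-cont} needed downstream.
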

 \begin{proof}
    The existence follows by a classical arguments \textit{e.g.} use the Galerkin approximation and prove that the approximation sequence is a Cauchy sequence in $C([0,T];L^2(\mathbb{T}^d))$(see also \cite[Corollaries II.1 and II.2]{diperna1989ordinary} for a different proof). The uniqueness  and the boundedness are consequence of   \autoref{Lemma-uniq-cont} below.
 \end{proof}

 \begin{lemma}(Stability)\label{Lemma-uniq-cont}
     Let  $\rho_1$ and $\rho_2$ be two solution to \eqref{continuity-eqn} in the sense \eqref{sense-cont}, with initial conditions $\rho_0^1$ and $\rho_0^2$ respectively. Then
       \begin{align}\label{stability-cont}
     \sup_{t\in [0,T]}   \int_{\mathbb{T}^d} \vert (\rho_1-\rho_2)(t)\vert^2 dx\leq \int_{\mathbb{T}^d}\vert\rho_0^1-\rho_0^2\vert^2 dx.
    \end{align}
 \end{lemma}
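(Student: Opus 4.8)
The plan is to run the classical $L^2$ energy estimate for the transport equation, regularized in the space variable, with the DiPerna--Lions commutator lemma controlling the regularization error. By linearity of \eqref{continuity-eqn}, the difference $w:=\rho_1-\rho_2$ is again a solution in the sense \eqref{sense-cont}, with initial datum $w_0:=\rho_0^1-\rho_0^2\in L^2(\mathbb{T}^d)$; recall $\rho_1,\rho_2\in C([0,T];L^2(\mathbb{T}^d))$ (cf. \autoref{prop-exit-cont}), so $w\in C([0,T];L^2(\mathbb{T}^d))$. Since $C^\infty(\mathbb{T}^d)$ is dense in $H^1(\mathbb{T}^d)$ and $t\mapsto \int_{\mathbb{T}^d}w(t)\varphi\,dx$ is continuous for each test function, \eqref{sense-cont} is equivalent to
\[
\partial_t w+\Div_x(u_L w)=0\quad\text{in }\mathcal{D}'\big((0,T)\times\mathbb{T}^d\big),\qquad w|_{t=0}=w_0,
\]
where we used $\Div_x u_L=0$. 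It thus suffices to show $\sup_{t\in[0,T]}\|w(t)\|_{L^2}^2\le\|w_0\|_{L^2}^2$.

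Fix $\delta>0$ and set $w^\delta:=[w]_\delta=\Theta_\delta\ast w$ (convolution in $x$ only, with the kernel introduced in \autoref{section-estimate}). Since mollification commutes with $\Div_x$, convolving the equation gives, for a.e.\ $t\in(0,T)$,
\[
\partial_t w^\delta+\Div_x(u_L w^\delta)=r^\delta,\qquad r^\delta:=\Div_x(u_L w^\delta)-[\Div_x(u_L w)]_\delta .
\]
Using $\Div_x u_L=0$ and integrating by parts in $y$, one computes the explicit expression
\[
r^\delta(t,x)=\int_{\mathbb{T}^d}(\nabla\Theta_\delta)(x-y)\cdot\big(u_L(t,x)-u_L(t,y)\big)\,w(t,y)\,dy .
\]
As $u_L\in C([0,T];C^2(\mathbb{T}^d;\mathbb{R}^d))\subset C([0,T];W^{1,\infty})$, on $\{|x-y|\le\delta\}$ one has $|u_L(t,x)-u_L(t,y)|\le \|\nabla_x u_L(t)\|_\infty\,|x-y|$, which together with $|(\nabla\Theta_\delta)(x-y)|\le C\delta^{-d-1}\mathbf 1_{\{|x-y|\le\delta\}}$ yields the uniform bound $\|r^\delta(t)\|_{L^2}\le C\|\nabla_x u_L(t)\|_\infty\|w(t)\|_{L^2}$; moreover $r^\delta\to0$ in $L^1(0,T;L^2(\mathbb{T}^d))$ as $\delta\to0$ (the DiPerna--Lions commutator lemma, \cite[Lemma II.1]{diperna1989ordinary}).

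For fixed $\delta$, $w^\delta(t,\cdot)$ is smooth in $x$ and $\partial_t w^\delta\in L^\infty(0,T;L^2(\mathbb{T}^d))$, so $w^\delta\in C([0,T];L^2(\mathbb{T}^d))$ and we may test the equation with $w^\delta(t)$ itself. Since $\Div_x u_L=0$, integration by parts gives $\int_{\mathbb{T}^d}w^\delta\,\Div_x(u_L w^\delta)\,dx=\tfrac12\int_{\mathbb{T}^d}u_L\cdot\nabla_x|w^\delta|^2\,dx=0$, hence for a.e.\ $t$
\[
\frac{d}{dt}\,\tfrac12\|w^\delta(t)\|_{L^2}^2=\int_{\mathbb{T}^d}w^\delta(t)\,r^\delta(t)\,dx\le \|w\|_{L^\infty(0,T;L^2)}\,\|r^\delta(t)\|_{L^2}.
\]
Integrating over $[0,t]$ and using $\|w^\delta(s)\|_{L^2}\le\|w(s)\|_{L^2}\le\|w\|_{L^\infty(0,T;L^2)}$ (Young's convolution inequality),
\[
\|w^\delta(t)\|_{L^2}^2\le\|w^\delta(0)\|_{L^2}^2+2\|w\|_{L^\infty(0,T;L^2)}\int_0^T\|r^\delta(s)\|_{L^2}\,ds,\qquad t\in[0,T].
\]
Letting $\delta\to0$: $w^\delta(0)\to w_0$ and $w^\delta(t)\to w(t)$ strongly in $L^2(\mathbb{T}^d)$ for every $t$ (standard mollifier convergence, as $w(t)\in L^2$), while the last integral tends to $0$; therefore $\|w(t)\|_{L^2}^2\le\|w_0\|_{L^2}^2$ for all $t\in[0,T]$, i.e.\ \eqref{stability-cont}. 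The only non-routine point is the vanishing of the commutator $r^\delta$ in $L^1_tL^2_x$, which is classical and, here, elementary thanks to the $C^2$-regularity of $u_L$.
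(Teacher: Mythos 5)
Your proof is correct and follows essentially the same route as the paper: mollify in $x$, exploit $\Div_x u_L=0$ so the transport term vanishes after integration by parts, and control the regularization error by a DiPerna--Lions commutator estimate (the paper's \autoref{Lemma-tech-1}) before letting $\delta\to 0$. The only cosmetic difference is that you differentiate the mollified equation in time, whereas the paper inserts $[\rho_1-\rho_2]_\delta$ directly as a test function in the integrated weak formulation \eqref{sense-cont}.
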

 \begin{proof}
 Let $\varphi \in H^1(\mathbb{T}^d)$ and $t\in [0,T]$.
     Note that $\rho_1-\rho_2$ satisfies 
    \begin{align*}
        \int_{\mathbb{T}^d} (\rho_1-\rho_2)(t)\varphi dx-      \int_0^t\int_{\mathbb{T}^d} (\rho_1-\rho_2)(s)u_L\cdot \nabla\varphi dxds=\int_{\mathbb{T}^d}(\rho_0^1-\rho_0^2)\varphi dx. 
    \end{align*}
Recall that  $\varphi_{\delta}:=\Theta_{\delta}*\varphi$\footnote{Recall that $\Theta_{\delta}$ is  a symmetric regularization kernel.}   is  an  appropriate test    function    in  last equation, hence
\begin{align*}
        \int_{\mathbb{T}^d} [(\rho_1-\rho_2)]_\delta(t)\varphi dx+      \int_0^t\int_{\mathbb{T}^d} [u_L\cdot \nabla (\rho_1-\rho_2)(s) ]_\delta  \varphi dxds=\int_{\mathbb{T}^d}[(\rho_0^1-\rho_0^2)]_\delta\varphi dx.  \end{align*}
    Set 
  $\varphi:=\Theta_\delta*(\rho_1-\rho_2)=[\rho_1-\rho_2]_\delta,$ which  is  an  appropriate test    function in the last equality. We get
  \begin{align}\label{cont-reg-tested}
        \int_{\mathbb{T}^d} [(\rho_1-\rho_2)]_\delta^2(t)dx+      \int_0^t\int_{\mathbb{T}^d} [u_L\cdot \nabla (\rho_1-\rho_2)(s) ]_\delta  [\rho_1-\rho_2]_\delta dxds=\int_{\mathbb{T}^d}[(\rho_0^1-\rho_0^2)]_\delta^2 dx  
  \end{align}
  Now, we are in position to use commutator estimates (\autoref{Lemma-tech-1}, Appendix C). Namely, notice that
  \begin{align*}
      [u_L\cdot \nabla (\rho_1-\rho_2)]_\delta [\rho_1-\rho_2]_\delta&=      \big( [u_L\cdot \nabla (\rho_1-\rho_2)]_\delta-u_L\cdot \nabla [(\rho_1-\rho_2)]_\delta\big) [(\rho_1-\rho_2)(s)]_\delta\\&\qquad+      u_L\cdot \nabla [(\rho_1-\rho_2)]_\delta [\rho_1-\rho_2]_\delta.
  \end{align*}
  Recall that $\rho_1-\rho_2\in C([0,T];L^2(\mathbb{T}^d))$ and by applying \autoref{Lemma-tech-1}, we get 
  \begin{align*}
       \big( [u_L\cdot \nabla (\rho_1-\rho_2)]_\delta-u_L\cdot \nabla [(\rho_1-\rho_2)]_\delta\big) [\rho_1-\rho_2]_\delta \to 0 \text{ in } L^2(\T^d\times (0,T)) \text{ as } \delta \to 0.
  \end{align*}
  On the other hand,  we have
  \begin{align*}
  \int_0^t\int_{\mathbb{T}^d}    u_L\cdot \nabla [(\rho_1-\rho_2)]_\delta [\rho_1-\rho_2]_\delta dxds=\dfrac{1}{2}\int_0^t\int_{\mathbb{T}^d} \Div\big( u_L [(\rho_1-\rho_2)]_\delta^2\big) dxds=0.
  \end{align*}
  Now, we pass to the limit in \eqref{cont-reg-tested} as $\delta \to 0$ to conclude the proof.
 \end{proof}
\begin{lemma}\label{lem-cont-limit} Assume that $H_1$ and $H_2$ hold. Then, 
there exists a unique solution $\rho$ in the sense of \eqref{sense-cont}
  to    \begin{align}\label{continuity-eqn-limit}
\begin{cases}&\partial_t \rho+u_L\cdot \nabla_x \rho=0\\
&\rho(x)|_{t=0}=\rho_0=\int_{\mathbb{R}^d}f_0(x,r)dr,
\end{cases}\end{align}
    and the  sequence $(\rho_\tau)_\tau$ converges  to $\rho$ in $C([0,T],L^2(\mathbb{T}^d))$  as $\tau \to 0$.
\end{lemma}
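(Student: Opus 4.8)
The plan is to deduce the statement from the existence/uniqueness result \autoref{prop-exit-cont} and the stability estimate \autoref{Lemma-uniq-cont}, once the convergence of the initial data has been secured. The one elementary ingredient I would isolate first is the weighted Cauchy--Schwarz bound: for every $g\in H_\alpha$,
\[
\Big\|\int_{\mathbb{R}^d} g(\cdot,r)\,dr\Big\|_{L^2(\mathbb{T}^d)}^2\le \Big(\int_{\mathbb{R}^d}\big(1+\tfrac{\vert r\vert^2}{2}\big)^{-\alpha}\,dr\Big)\,\|g\|_{H_\alpha}^2=Z\,\|g\|_{H_\alpha}^2,
\]
where $Z<\infty$ exactly because $\alpha>\frac{d}{2}$ (this is the quantitative counterpart of \autoref{rmq-less1}). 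Taking $g=f_0$ gives $\rho_0=\int_{\mathbb{R}^d}f_0(\cdot,r)\,dr\in L^2(\mathbb{T}^d)$, so by \autoref{prop-exit-cont} (applied with this initial datum) there is a unique $\rho\in C([0,T];L^2(\mathbb{T}^d))$ solving \eqref{continuity-eqn-limit} in the sense \eqref{sense-cont}.

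Next I would pass to the limit. Applying the inequality above with $g=f_0^\tau-f_0$ and invoking (H$_2$) yields
\[
\|\rho_0^\tau-\rho_0\|_{L^2(\mathbb{T}^d)}^2\le Z\,\|f_0^\tau-f_0\|_{H_\alpha}^2\longrightarrow 0\quad\text{as }\tau\to0.
\]
Since $\rho_\tau$ and $\rho$ are both solutions of the transport problem with the same velocity $u_L$ in the sense \eqref{sense-cont}, with initial data $\rho_0^\tau$ and $\rho_0$, the stability estimate \eqref{stability-cont} of \autoref{Lemma-uniq-cont} (with $\rho_1=\rho_\tau$, $\rho_2=\rho$) gives
\[
\sup_{t\in[0,T]}\int_{\mathbb{T}^d}\vert(\rho_\tau-\rho)(t)\vert^2\,dx\le \int_{\mathbb{T}^d}\vert\rho_0^\tau-\rho_0\vert^2\,dx\le Z\,\|f_0^\tau-f_0\|_{H_\alpha}^2.
\]
Letting $\tau\to0$ and recalling that $\rho_\tau-\rho\in C([0,T];L^2(\mathbb{T}^d))$, this is precisely convergence of $\rho_\tau$ to $\rho$ in $C([0,T];L^2(\mathbb{T}^d))$.

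The argument is short and essentially routine once \autoref{prop-exit-cont} and \autoref{Lemma-uniq-cont} are available; the only point deserving care --- the one I would flag as the ``hard part'', modest as it is --- is the weighted Cauchy--Schwarz step, since this is exactly where the hypothesis $\alpha>\frac{d}{2}$ (equivalently $Z<\infty$) is used, both to make $\rho_0$ and $\rho_0^\tau$ genuine $L^2(\mathbb{T}^d)$ functions and to upgrade the $H_\alpha$-convergence of the initial data to $L^2$-convergence of the corresponding $r$-marginals. If instead one only assumes $f_0^\tau\to f_0$ in $L^2_xL^1_r$ (cf.\ the remark following \autoref{MAin-thm-1}), the same three steps go through verbatim with the trivial bound $\|\int_{\mathbb{R}^d} g(\cdot,r)\,dr\|_{L^2(\mathbb{T}^d)}\le\|g\|_{L^2_xL^1_r}$ in place of the weighted one.
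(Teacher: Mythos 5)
Your proposal is correct and follows essentially the same route as the paper: the weighted Cauchy--Schwarz bound with $Z=\int_{\mathbb{R}^d}(1+\tfrac{\vert r\vert^2}{2})^{-\alpha}dr<\infty$ (using $\alpha>\tfrac{d}{2}$ and (H$_2$)) to get $\rho_0\in L^2(\mathbb{T}^d)$ and $\Vert\rho_0^\tau-\rho_0\Vert_{L^2}\to 0$, existence and uniqueness as in \autoref{prop-exit-cont}, and then the stability estimate of \autoref{Lemma-uniq-cont} with $\rho_1=\rho_\tau$, $\rho_2=\rho$ to conclude the convergence in $C([0,T];L^2(\mathbb{T}^d))$.
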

\begin{proof}
Recall that $\rho_0=\int_{\mathbb{R}^d}f_0(x,r)dr\in L^2(\T^d)$,
the existence and uniqueness follow like  \autoref{prop-exit-cont}. On the other hand,    we get $\Vert \rho_0^\tau-\rho_0\Vert_{L^2} \to 0.$ Indeed, notice that
\begin{align*}
    \Vert \rho_0^\tau-\rho_0\Vert_{L^2}^2&=\int_{\T^d}\big(\int_{\R^d} (f_0^\tau(x,r)-f_0(x,r) dr\big )^2dx\leq  \int_{\T^d} (\int_{\R^d} \vert f_0^\tau(x,r)-f_0(x,r)\vert dr)^2dx\\
    &\leq  \int_{\T^d} (\int_{\R^d} \vert f_0^\tau(x,r)-f_0(x,r)\vert dr)^2dx\\
    &\leq   \int_{\R^d}(1+\frac{\vert r\vert^2}{2})^{-\alpha} dr\int_{\T^d} \int_{\R^d} \vert f_0^\tau(x,r)-f_0(x,r)\vert^2(1+\frac{\vert r\vert^2}{2})^\alpha drdx \to 0 \text{ as } \delta \to 0,
\end{align*}
thanks to $H_2$ and  $\int_{\R^d}(1+\frac{\vert r\vert^2}{2})^{-\alpha} dr <+\infty$ since $\alpha> \dfrac{d}{2}.$
Thus, the convergence is a consequence of \autoref{Lemma-uniq-cont} by setting $\rho_1=\rho_\tau$ and $\rho_2=\rho$.
    \end{proof}

    \section{Convergence result as $\tau\to 0$ }\label{section-cv}
    \subsection{The case $\alpha>\frac{d}{2}$.}
    Let $\alpha>\frac{d}{2}$,  we prove the following result.
            \begin{proposition}\label{prop-cv-1}
                Let $(f_\tau)_\tau$ be  the unique
quasi-regular weak solution to \eqref{equ-final-I} for any $\tau>0$. Assume that assumptions $H_1$ and $H_2$ are satisfied then $f_\tau$ converges to $\rho \otimes p_\alpha$ in $L^2(0,T;H_\alpha)$ as $\tau \to 0$. More precisely, there exists $\mathbf{C}_\alpha>0$( independent of $\tau$) such that the following inequality holds
\begin{align*}
    &  \int_0^T\hspace{-0.2cm}\int_{\mathbb{T}^d}\hspace{-0.1cm}\int_{\mathbb{R}^d}(f_\tau-\rho \otimes p_\alpha)^2(s)(1+\frac{\vert r\vert^2}{2})^\alpha dr dxds
\leq \mathbf{C}_\alpha\tau+\dfrac{1}{Z}\int_0^T\hspace{-0.2cm}\int_{\mathbb{T}^d} \vert \rho_\tau-\rho\vert^2 dxds,\notag
\end{align*}
where $\rho$ is the unique solution to \eqref{continuity-eqn-limit}  and $p_\alpha(r)=\dfrac{1}{Z}(1+\frac{\vert r\vert^2}{2})^{-\alpha}$  is the Cauchy probability density on $\mathbb{R}^d$. 
            \end{proposition}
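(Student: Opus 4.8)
The plan is to compare $f_\tau$ with the ``local equilibrium'' $\rho_\tau\otimes p_\alpha$, where $\rho_\tau(t,x):=\int_{\mathbb{R}^d}f_\tau(t,x,r)\,dr$ is as in \eqref{continuity-eqn}, and then to pass from $\rho_\tau$ to $\rho$. First I would observe that, since $\alpha>\frac{d}{2}$, \autoref{rmq-less1} together with the uniform bound $\sup_{t}\Vert f_\tau(t)\Vert_{H_\alpha}^2\le\overline{\mathbf{K}}$ of \autoref{propo-estimates} makes $\rho_\tau$ well defined, with $\rho_\tau\in L^\infty(0,T;L^2(\mathbb{T}^d))$; moreover $\rho_\tau$ solves \eqref{continuity-eqn} in the sense \eqref{sense-cont} (\autoref{prop-exit-cont}) and, by \autoref{lem-cont-limit}, $\rho_\tau\to\rho$ in $C([0,T];L^2(\mathbb{T}^d))$ as $\tau\to0$, where $\rho$ is the unique solution of \eqref{continuity-eqn-limit}.

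Next, for a.e.\ $(t,x)$ I would split
\[
f_\tau(t,x,\cdot)-\rho(t,x)\,p_\alpha=g_\tau(t,x,\cdot)+\big(\rho_\tau(t,x)-\rho(t,x)\big)\,p_\alpha,\qquad g_\tau:=f_\tau-\rho_\tau\otimes p_\alpha.
\]
Since $\int_{\mathbb{R}^d}p_\alpha\,dr=1$ one has $\int_{\mathbb{R}^d}g_\tau(t,x,r)\,dr=0$, and because $p_\alpha(1+\frac{\vert r\vert^2}{2})^\alpha\equiv\frac{1}{Z}$ the two summands are orthogonal in $\mathbf{X}_\alpha$ for a.e.\ $(t,x)$; hence, integrating the square over $(0,T)\times\mathbb{T}^d$ and using $\int_{\mathbb{R}^d}p_\alpha^2(1+\frac{\vert r\vert^2}{2})^\alpha\,dr=\frac{1}{Z}$,
\[
\int_0^T\int_{\mathbb{T}^d}\int_{\mathbb{R}^d}(f_\tau-\rho\otimes p_\alpha)^2(1+\tfrac{\vert r\vert^2}{2})^\alpha\,dr\,dx\,ds
=\int_0^T\int_{\mathbb{T}^d}\int_{\mathbb{R}^d}g_\tau^2\,(1+\tfrac{\vert r\vert^2}{2})^\alpha\,dr\,dx\,ds+\frac{1}{Z}\int_0^T\int_{\mathbb{T}^d}\vert\rho_\tau-\rho\vert^2\,dx\,ds.
\]

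For the first term on the right I would invoke a weighted Poincaré inequality for the generalized Cauchy measure $p_\alpha\,dr$, valid for $\alpha>\frac{d}{2}$: for every $h$ with $\int_{\mathbb{R}^d}h\,dr=0$,
\[
\int_{\mathbb{R}^d}h^2(1+\tfrac{\vert r\vert^2}{2})^\alpha\,dr\le C_\alpha\int_{\mathbb{R}^d}\big|\nabla_r\big(h(1+\tfrac{\vert r\vert^2}{2})^\alpha\big)\big|^2(1+\tfrac{\vert r\vert^2}{2})^{1-\alpha}\,dr.
\]
Applying this with $h=g_\tau(t,x,\cdot)$ — admissible for a.e.\ $(t,x)$ because \eqref{key-estimate-beta} forces $f_\tau(t,x,\cdot)(1+\frac{\vert r\vert^2}{2})^\alpha\in\mathbf{Z}_\alpha$ there — and using the identity $\nabla_r\big(g_\tau(1+\frac{\vert r\vert^2}{2})^\alpha\big)=\nabla_r\big(f_\tau(1+\frac{\vert r\vert^2}{2})^\alpha\big)$ (since $g_\tau(1+\frac{\vert r\vert^2}{2})^\alpha=f_\tau(1+\frac{\vert r\vert^2}{2})^\alpha-\rho_\tau/Z$ and $\rho_\tau$ is independent of $r$), then integrating in $(t,x)$ and using the gradient bound in \eqref{key-estimate-beta}, I get
\[
\int_0^T\int_{\mathbb{T}^d}\int_{\mathbb{R}^d}g_\tau^2\,(1+\tfrac{\vert r\vert^2}{2})^\alpha\,dr\,dx\,ds\le C_\alpha\,\zeta\alpha\,\overline{\mathbf{K}}\,\tau=:\mathbf{C}_\alpha\,\tau.
\]
Together with the previous display this is the announced inequality; letting $\tau\to0$, the term $\mathbf{C}_\alpha\tau$ vanishes while $\int_0^T\int_{\mathbb{T}^d}\vert\rho_\tau-\rho\vert^2\,dx\,ds\le T\sup_{t}\Vert\rho_\tau(t)-\rho(t)\Vert_{L^2}^2\to0$ by \autoref{lem-cont-limit}, which proves $f_\tau\to\rho\otimes p_\alpha$ in $L^2(0,T;H_\alpha)$.

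The step I expect to be the main obstacle is the weighted Poincaré inequality above: the measure $p_\alpha\,dr$ is heavy-tailed, so no ordinary ($L^2$) Poincaré inequality is available, and one is forced to keep the \emph{degenerate} weight $(1+\frac{\vert r\vert^2}{2})$ on the gradient side — precisely the weight produced by the stretching matrix $A_b$ through $A_b(r)\xi\cdot\xi\ge\vert r\vert^2\vert\xi\vert^2$ — together with the restriction $\alpha>\frac{d}{2}$; this is where the self-adjointness of $-\mathcal{L}$ and the description of its kernel and range on $\mathbf{X}_\alpha$ come in. A more routine point is the rigorous passage from \eqref{equ-final-I} to \eqref{continuity-eqn} for $\rho_\tau=\int_{\mathbb{R}^d}f_\tau\,dr$, obtained by testing with $\phi(x)$ times an $r$-cutoff and letting the cutoff grow, which is legitimate thanks to the uniform $H_\alpha$-bound and $\alpha>\frac{d}{2}$.
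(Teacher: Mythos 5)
Your argument is correct, and it takes a genuinely different (and shorter) route than the paper. You decompose around the \emph{local} equilibrium $\rho_\tau\otimes p_\alpha$ and exploit the exact $\mathbf{X}_\alpha$-orthogonality of $g_\tau=f_\tau-\rho_\tau\otimes p_\alpha$ and $(\rho_\tau-\rho)p_\alpha$ (valid because $\int_{\mathbb{R}^d}g_\tau\,dr=0$ and $p_\alpha(1+\frac{\vert r\vert^2}{2})^\alpha\equiv\frac1Z$), so that the only PDE input you need is the uniform dissipation bound of \autoref{propo-estimates} together with the weighted Poincar\'e inequality of \autoref{proposition-Poincare-weight}; the key cancellation $\nabla_r\big(g_\tau(1+\frac{\vert r\vert^2}{2})^\alpha\big)=\nabla_r\big(f_\tau(1+\frac{\vert r\vert^2}{2})^\alpha\big)$ makes \eqref{key-estimate-beta} directly usable and gives the $O(\tau)$ bound on $\Vert g_\tau\Vert^2_{L^2(0,T;H_\alpha)}$, hence exactly the stated inequality with $\mathbf{C}_\alpha=C_\alpha\zeta\alpha\overline{\mathbf{K}}$. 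The paper instead subtracts the \emph{limit} profile $\rho\otimes p_\alpha$, redoes a full regularized energy estimate for the difference (mollification, cut-offs $T_M$, commutator lemma, and a $\widetilde{\mathbf{C}}\sqrt{\tau}$ remainder coming from the stretching term), arriving at \eqref{cv-inequality}, and only then applies the same weighted Poincar\'e inequality with the non-zero mean $\frac1Z(\rho_\tau-\rho)$; since $\nabla_r\big((\rho\otimes p_\alpha)(1+\frac{\vert r\vert^2}{2})^\alpha\big)=0$, the gradient quantity it controls coincides with the one in \eqref{key-estimate-beta}, so for the $L^2(0,T;H_\alpha)$ statement your shortcut loses nothing. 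What the paper's longer route buys is the additional uniform-in-time estimate contained in \eqref{cv-inequality}, i.e.\ the $L^\infty(0,T;H_\alpha)$ bound with rate $O(\sqrt{\tau})$ discussed after \autoref{MAin-thm-1}, which your argument does not produce. Two small points to keep in mind: both your proof and the paper's rely on identifying the marginal $\rho_\tau=\int_{\mathbb{R}^d}f_\tau\,dr$ with the unique solution of \eqref{continuity-eqn} (the paper only does this ``formally''); your cutoff justification is the right idea, but to kill the term $\frac{1}{\zeta\alpha\tau}\int A_b(r)\nabla_r f_\tau\cdot\nabla_r\psi_n\,\phi$, whose coefficient grows like $\vert r\vert^2$, you should invoke the $\tau$-dependent weighted gradient bound of \autoref{lemma-appendix} (not just the uniform $H_\alpha$ bound on $f_\tau$), which is harmless since $\tau$ is fixed at that stage. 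Also, the Poincar\'e inequality is applied for a.e.\ $(t,x)$ where the right-hand side is finite, which is exactly the reading the paper gives it, so your pointwise-in-$(t,x)$ use followed by integration is legitimate.
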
 

   \subsection*{Proof of \autoref{prop-cv-1}}
   Let $t\in [0,T]$ and    set $f:=f(t,x,r)=\rho(t,x) \otimes p_\alpha(r)$, define $g_\tau=f_\tau-f$ and $g_0^\tau=f_0^\tau-\rho_0\otimes p_\alpha$. By using \eqref{ground-state} and \eqref{continuity-eqn-limit} we get
 \begin{align}&\int_{\mathbb{T}^d}\int_{\mathbb{R}^d}g_\tau(t)\phi drdx-\int_{\mathbb{T}^d}\int_{\mathbb{R}^d}g_0^\tau\phi drdx\label{equ-beta-I}\\&= \int_0^t\int_{\mathbb{T}^d}\int_{\mathbb{R}^d}g_\tau(s)u_L(s)\cdot \nabla_x \phi dr dxds+ \int_0^t\int_{\mathbb{T}^d}\int_{\mathbb{R}^d} f_\tau(s)(\nabla u_L(s)r) \cdot\nabla_r\phi dr dxds\notag\\	&-	\dfrac{1}{\zeta\alpha\tau}\int_0^t\int_{\mathbb{T}^d}\int_{\mathbb{R}^d} \alpha g_\tau(s)r\cdot\nabla_r\phi+\nabla_rg_\tau(s) \cdot\nabla_r\phi  +\dfrac{1}{2}A_b(r) \nabla_rg_\tau(s)\cdot\nabla_r\phi dr dxds,  \forall \phi \in \mathcal{Y}.\notag\end{align}
 
 Let     $\phi\in \mathcal{Y}$ and $\delta>0$, arguments already detailed ensure
	\begin{align}&\int_{\mathbb{T}^d}\int_{\mathbb{R}^d}[g_\tau(t)]_\delta\phi drdx-\int_{\mathbb{T}^d}\int_{\mathbb{R}^d}[g_0^\tau]_\delta\phi drdx\notag\\&= -\int_0^t\int_{\mathbb{T}^d}\int_{\mathbb{R}^d}[u_L(s)\cdot \nabla_xg_\tau(s)]_\delta \phi+ [(\nabla u_L(s)r) \cdot\nabla_rf_\tau(s)]_\delta\phi dr dxds\notag\\	&+	\dfrac{1}{\zeta\alpha\tau}\int_0^t\int_{\mathbb{T}^d}\int_{\mathbb{R}^d} \alpha [\Div_r( g_\tau(s)r)]_\delta\phi dr dxds\label{eqn-cv-delta}\\&+\dfrac{1}{\zeta\alpha\tau}\int_0^t\langle[\Delta_rg_\tau(s) ]_\delta,\phi  \rangle+\dfrac{1}{2}\langle[\Div_r(A_b(r) \nabla_rg_\tau(s))]_\delta,\phi \rangle ds,\notag\end{align}
    where $\langle \cdot,\cdot\rangle$ denotes the duality pairing between $\mathcal{Y}^\prime$ and $\mathcal{Y}.$\\ 
   Consider $\phi_{\delta,M}(x,r)= T_M(r)[g_\tau(\cdot)]_\delta(x,r) \in \mathcal{Y},$ which  is an appropriate test function in \eqref{eqn-cv-delta}. Thus,  we obtain
   \begin{align}&\int_{\mathbb{T}^d}\int_{\mathbb{R}^d}[g_\tau(t)]_\delta^2 T_M drdx-\int_{\mathbb{T}^d}\int_{\mathbb{R}^d}[g_0^\tau]_\delta^2 T_M drdx\notag\\&= -\int_0^t\int_{\mathbb{T}^d}\int_{\mathbb{R}^d}[u_L(s)\cdot \nabla_xg_\tau(s)]_\delta  T_M[g_\tau(s)]_\delta+ [(\nabla u_L(s)r) \cdot\nabla_rf_\tau(s)]_\delta T_M[g_\tau(s)]_\delta dr dxds\notag\\	&+	\dfrac{1}{\zeta\alpha\tau}\int_0^t\int_{\mathbb{T}^d}\int_{\mathbb{R}^d} \alpha [\Div_r( g_\tau(s)r)]_\delta T_M[g_\tau(s)]_\delta dr dxds+\dfrac{1}{\zeta\alpha\tau}\int_0^t\langle [\Delta_rg_\tau(s) ]_\delta, T_M[g_\tau(s)]_\delta \rangle ds\notag\\ & +\dfrac{1}{\zeta\alpha\tau}\int_0^t\langle\dfrac{1}{2}[\Div_r(A_b(r) \nabla_rg_\tau(s))]_\delta, T_M[g_\tau(s)]_\delta \rangle ds\notag\end{align}
   Now,  let   us    pass    to  the limit  as $\delta \to 0$ then $M\to +\infty$  in  the last   equality.
   \begin{itemize}
       \item A standard  argument (see \textit{e.g.} the proof of \autoref{lemma-appendix}) ensures
    \begin{align*}
          \liminf_{M\to +\infty} \int_{\mathbb{T}^d}\int_{\mathbb{R}^d}g_\tau(t)^2 T_M drdx \leq \liminf_{M\to +\infty}\liminf_{\delta\to 0} \int_{\mathbb{T}^d}\int_{\mathbb{R}^d}[g_\tau(t)]_\delta^2 T_M drdx,
       \end{align*}
     and   by monotone convergence theorem (see \textit{e.g.} \cite[Theorem 4.1]{Brezis}), we deduce 
        \begin{align*}
          \liminf_{M\to +\infty} \int_{\mathbb{T}^d}\int_{\mathbb{R}^d}g_\tau(t)^2 T_M drdx =\lim_{M\to +\infty} \int_{\mathbb{T}^d}\int_{\mathbb{R}^d}g_\tau(t)^2 T_M drdx=\int_{\mathbb{T}^d}\int_{\mathbb{R}^d}g_\tau(t)^2(1+\frac{\vert r\vert^2}{2})^\alpha  drdx.
       \end{align*}
       \item  Since $g_0^\tau\in H_\alpha$, we get $\displaystyle\lim_{\delta\to 0}\int_{\mathbb{T}^d}\int_{\mathbb{R}^d}[g_0^\tau]_\delta^2 T_M drdx=\int_{\mathbb{T}^d}\int_{\mathbb{R}^d}(g_0^\tau)^2 T_M drdx,$ again monotone convergence theorem ensures
       \begin{align*}
         \lim_{M\to +\infty}  \int_{\mathbb{T}^d}\int_{\mathbb{R}^d}(g_0^\tau)^2 T_M drdx=\int_{\mathbb{T}^d}\int_{\mathbb{R}^d}(g_0^\tau)^2(1+\frac{\vert r\vert^2}{2})^\alpha  drdx.
       \end{align*}
       \item $\displaystyle\lim_{\delta\to 0}\int_0^t\int_{\mathbb{T}^d}\int_{\mathbb{R}^d}[u_L(s)\cdot \nabla_xg_\tau(s)]_\delta  T_M[g_\tau(s)]_\delta drdxds=0.$ Indeed,  Note that
       \begin{align*}
          &[u_L(s)\cdot \nabla_xg_\tau(s)]_\delta  T_M[g_\tau(s)]_\delta\\&=\big([u_L(s)\cdot \nabla_xg_\tau(s)]_\delta-u_L(s)\cdot \nabla_x[g_\tau(s)]_\delta\big)  T_M[g_\tau(s)]_\delta+u_L(s)\cdot \nabla_x[g_\tau(s)]_\delta  T_M[g_\tau(s)]_\delta.
       \end{align*}
       On the one hand, since $T_M$ depends only on $r$, we have 
       \begin{align*}
           \int_0^t\int_{\mathbb{T}^d}\int_{\mathbb{R}^d}  u_L(s)\cdot \nabla_x[g_\tau(s)]_\delta  T_M[g_\tau(s)]_\delta drdxds&=           \int_0^t\int_{\mathbb{R}^d} T_M \int_{\mathbb{T}^d} u_L(s)\cdot \nabla_x[g_\tau(s)]_\delta  [g_\tau(s)]_\delta dxdrds\\
           &=\dfrac{1}{2}\int_0^t\int_{\mathbb{R}^d} T_M \int_{\mathbb{T}^d} \Div[u_L(s)[g_\tau(s)]_\delta^2] dxdrds=0.
       \end{align*}
       On the other hand,  since $\vert T_M\vert \leq M$ and by using Cauchy–Schwarz inequality we have
       \begin{align*}
           &\vert\int_0^t\int_{\mathbb{T}^d}\int_{\mathbb{R}^d} \big([u_L(s)\cdot \nabla_xg_\tau(s)]_\delta-u_L(s)\cdot \nabla_x[g_\tau(s)]_\delta\big)  T_M[g_\tau(s)]_\delta drdxds \vert\\
           & \leq M\Vert  [u_L(s)\cdot \nabla_xg_\tau(s)]_\delta-u_L(s)\cdot \nabla_x[g_\tau(s)]_\delta \Vert_{L^{2}(\T^d\times\R^d\times (0,T))}\Vert  [g_\tau(s)]_\delta\Vert_{L^{2}(\T^d\times\R^d\times (0,T))}\\
           &\leq M\Vert  [u_L\cdot \nabla_xg_\tau]_\delta-u_L\cdot \nabla_x[g_\tau]_\delta \Vert_{L^{2}(\T^d\times\R^d\times (0,T))}\Vert  g_\tau\Vert_{L^{2}(\T^d\times\R^d\times (0,T))} \to 0,
       \end{align*}
       by using that $g_\tau\in L^2(0,T;H)$ and the commutator estimate, see \autoref{Lemma-tech-1}.  
       
          \item  The term $\int_0^t\int_{\mathbb{T}^d}\int_{\mathbb{R}^d} [(\nabla u_L(s)r) \cdot\nabla_rf_\tau(s)]_\delta T_M[g_\tau(s)]_\delta dr dxds$. This term requires special attention, by using  \eqref{regularity-f} and the properties of convolution one gets 
\begin{align*}
  &\lim_{\delta\to 0}  \int_0^t\int_{\mathbb{T}^d}\int_{\mathbb{R}^d} [(\nabla u_L(s)r) \cdot\nabla_rf_\tau(s)]_\delta T_M[g_\tau(s)]_\delta dr dxds\\&=\int_0^t\int_{\mathbb{T}^d}\int_{\mathbb{R}^d} (\nabla u_L(s)r) \cdot\nabla_rf_\tau(s) T_Mg_\tau(s) dr dxds.
\end{align*}
Notice that 
\begin{align*}
   &\int_0^t\int_{\mathbb{T}^d}\int_{\mathbb{R}^d} (\nabla u_L(s)r) \cdot\nabla_rf_\tau(s) T_Mg_\tau(s) dr dxds=\int_0^t\int_{\mathbb{T}^d}\int_{\mathbb{R}^d} (\nabla u_L(s)r) \cdot\nabla_rf_\tau(s) T_Mf_\tau(s) dr dxds\\&\qquad-\int_0^t\int_{\mathbb{T}^d}\int_{\mathbb{R}^d} (\nabla u_L(s)r) \cdot\nabla_rf_\tau(s) T_M\rho(s)\otimes p_\alpha dr dxds.
\end{align*}
Concerning the second term, we have
\begin{align} \vert (\nabla u_L(s)r) \cdot\nabla_rf_\tau(s) T_M\rho(s)\otimes p_\alpha \vert \leq  \dfrac{1}{Z}\Vert \nabla u_L\Vert_\infty \vert r\vert  \vert \nabla_rf_\tau(s)\vert  \vert\rho(s)\vert.
\end{align}
Thanks to \eqref{estim-beta-1}, we get  $\Vert \nabla u_L\Vert_\infty \vert r\vert  \vert \nabla_rf_\tau(s)\vert  \vert\rho(s)\vert\in L^1(\mathbb{R}^d\times \mathbb{T}^d\times (0,T))$ and Lebesgue's dominated convergence theorem (see \textit{e.g.} \cite[Theorem 4.2]{Brezis}) ensures
\begin{align*}
    &\lim_{M\to +\infty}\int_0^t\int_{\mathbb{T}^d}\int_{\mathbb{R}^d} (\nabla u_L(s)r) \cdot\nabla_rf_\tau(s) T_M\rho(s)\otimes p_\alpha dr dxds\\&=\dfrac{1}{Z}\int_0^t\int_{\mathbb{T}^d} \rho(s)\int_{\mathbb{R}^d} (\nabla u_L(s)r) \cdot\nabla_rf_\tau(s)  dr dxds\\
    &=\int_0^t\int_{\mathbb{T}^d} \rho(s)\int_{\mathbb{R}^d} \Div_r\big(\nabla u_L(s)r f_\tau(s)\big)  dr dxds=0.
\end{align*}
On the other hand, concerning the first term we have
\begin{align*}
    &\int_0^t\int_{\mathbb{T}^d}\int_{\mathbb{R}^d} (\nabla u_L(s)r) \cdot\nabla_rf_\tau(s) T_Mf_\tau(s) dr dxds\\&=    \int_0^t\int_{\mathbb{T}^d}\int_{\mathbb{R}^d} (\nabla u_L(s)r) \cdot\nabla_rf_\tau(s) T_M(1+\frac{\vert r\vert^2}{2})^{-\alpha} f_\tau(s)(1+\frac{\vert r\vert^2}{2})^\alpha dr dxds\\
    &=-\int_0^t\int_{\mathbb{T}^d}\int_{\mathbb{R}^d} f_\tau^2(s)(\nabla u_L(s)r)  \cdot\nabla_r(T_M(1+\frac{\vert r\vert^2}{2})^{-\alpha}) (1+\frac{\vert r\vert^2}{2})^\alpha dr dxds\\
    &-\int_0^t\int_{\mathbb{T}^d}\int_{\mathbb{R}^d}  f_\tau(s) T_M(1+\frac{\vert r\vert^2}{2})^{-\alpha} (\nabla u_L(s)r)\cdot\nabla_r(f_\tau(s)(1+\frac{\vert r\vert^2}{2})^\alpha) dr dxds\\
    &=A_1^M+A_2^M.
\end{align*}
We have $\displaystyle\lim_{M\to +\infty}\vert A_1^M\vert=0.$ Indeed, by using \eqref{eqn-cut-vanish} we get
\begin{align*}
    \vert A_1^M \vert &\leq  \alpha\Vert \nabla u_L\Vert_\infty \int_0^t\int_{\mathbb{T}^d}\int_{\mathbb{R}^d} f_\tau^2(s)
    \vert r \vert^2  M(1+\frac{\vert r\vert^2}{2})^{-\alpha-1}1_{ \{  (1+\frac{\vert r\vert^2}{2})^{\alpha} >M\}}(r) (1+\frac{\vert r\vert^2}{2})^\alpha dr dxds\\
    &\leq   \alpha\Vert \nabla u_L\Vert_\infty \int_0^t\int_{\mathbb{T}^d}\int_{\mathbb{R}^d} f_\tau^2(s)(1+\frac{\vert r\vert^2}{2})^\alpha 1_{ \{  (1+\frac{\vert r\vert^2}{2})^{\alpha} >M\}}(r)  dr dxds.
\end{align*}
\begin{itemize}
    \item[i.]  Since    $0\leq 1_{ \{  (1+\frac{\vert r\vert^2}{2})^{\alpha} >M\}}(r) \leq 1$ for any $r\in \mathbb{R}^d$, we have   \begin{align*}
    \vert \mathbf{f}_M(s,r)\vert :=      f_\tau^2(s)(1+\frac{\vert r\vert^2}{2})^\alpha 1_{ \{  (1+\frac{\vert r\vert^2}{2})^{\alpha} >M\}}(r) \leq         f_\tau^2(s)(1+\frac{\vert r\vert^2}{2})^\alpha:=\mathbf{f}(s,r) ,\quad \forall M\in \mathbb{N}. 
      \end{align*}
for $ (s,r)\in [0,T]\times \mathbb{R}^d$.      Thanks to \autoref{propo-estimates}, we have $\mathbf{f}\in L^1(\mathbb{T}^d\times \mathbb{R}^d\times (0,T)),$ namely 
       $$\displaystyle\int_0^T\int_{\mathbb{T}^d}\int_{\mathbb{R}^d} \mathbf{f}dr dxds= \displaystyle\int_0^T\int_{\mathbb{T}^d}\int_{\mathbb{R}^d}f_\tau^2(s)(1+\frac{\vert r\vert^2}{2})^\alpha   dr dxds<+\infty.$$  
     \item[ii.]  Since  $  \displaystyle\lim_{M\to +\infty} 1_{ \{  (1+\frac{\vert r\vert^2}{2})^{\alpha} >M\}}(r)=0 $  for every $r\in \mathbb{R}^d$ and  $f_\tau^2(s)(1+\frac{\vert r\vert^2}{2})^\alpha$ is independent of $M$ almost everywhere $(x,r,s)\in \mathbb{T}^d\times \mathbb{R}^d\times (0,T)$, it follows that 
       \begin{align*}
     \lim_{M\to +\infty}      f_\tau^2(s)(1+\frac{\vert r\vert^2}{2})^\alpha 1_{ \{  (1+\frac{\vert r\vert^2}{2})^{\alpha} >M\}}(r)  dr dxds=0 \text{ a.e. } (x,r,s)\in \mathbb{T}^d\times \mathbb{R}^d\times (0,T).
      \end{align*}
     \end{itemize}
      Therefore, Lebesgue's dominated convergence theorem (see \textit{e.g.} \cite[Theorem 4.2]{Brezis}) ensures 
      \begin{align*}
     \lim_{M\to +\infty}     \int_0^t\int_{\mathbb{T}^d}\int_{\mathbb{R}^d} f_\tau^2(s)(1+\frac{\vert r\vert^2}{2})^\alpha 1_{ \{  (1+\frac{\vert r\vert^2}{2})^{\alpha} >M\}}(r)  dr dxds=0
      \end{align*}
      and
      $\displaystyle\lim_{M\to +\infty}\vert A_1^M\vert=0.$\\
      
      Concerning $A_2^M$, since $\vert T_M(1+\frac{\vert r\vert^2}{2})^{-\alpha}\vert \leq 1$  we have
     \begin{align*}
      G_M:=   &\vert f_\tau(s) T_M(1+\frac{\vert r\vert^2}{2})^{-\alpha} (\nabla u_L(s)r)\cdot\nabla_r(f_\tau(s)(1+\frac{\vert r\vert^2}{2})^\alpha)  \vert \\& \quad\leq  \vert f_\tau(s)  (\nabla u_L(s)r)\cdot\nabla_r(f_\tau(s)(1+\frac{\vert r\vert^2}{2})^\alpha)\vert  =G.  \end{align*}
     By using \eqref{key-estimate-beta} and Young's inequality, we have $G\in L^1(\mathbb{T}^d\times \mathbb{R}^d\times (0,T)).$ Indeed,
     \begin{align*}
        \int_0^t\int_{\mathbb{T}^d}\int_{\mathbb{R}^d} \vert G\vert  dr dxds&\leq  \int_0^t\int_{\mathbb{T}^d}\int_{\mathbb{R}^d}  f_\tau^2(s)(1+\frac{\vert r\vert^2}{2})^{\alpha}dr dxds\\&+2\Vert \nabla u_L\Vert_\infty^2 \int_0^t\int_{\mathbb{T}^d}\int_{\mathbb{R}^d}\vert \nabla_r (f_\tau(s)(1+\frac{\vert r\vert^2}{2})^\alpha\vert^2(1+\frac{\vert r\vert^2}{2})^{1-\alpha} dr dxds<+\infty.
     \end{align*}
     On the other hand, notice that 
     \begin{align*}
          \lim_{M\to +\infty}&   f_\tau(s) T_M(1+\frac{\vert r\vert^2}{2})^{-\alpha} (\nabla u_L(s)r)\cdot\nabla_r(f_\tau(s)(1+\frac{\vert r\vert^2}{2})^\alpha) \notag\\&=   f_\tau(s)  (\nabla u_L(s)r)\cdot\nabla_r(f_\tau(s)(1+\frac{\vert r\vert^2}{2})^\alpha)  \text{ a.e. } (x,r,s)\in \mathbb{T}^d\times \mathbb{R}^d\times (0,T).
     \end{align*}
     Lebesgue's dominated convergence theorem (see \textit{e.g.} \cite[Theorem 4.2]{Brezis}) gives
      \begin{align}
     \lim_{M\to +\infty}&  \int_0^t\int_{\mathbb{T}^d}\int_{\mathbb{R}^d}  f_\tau(s) T_M(1+\frac{\vert r\vert^2}{2})^{-\alpha} (\nabla u_L(s)r)\cdot\nabla_r(f_\tau(s)(1+\frac{\vert r\vert^2}{2})^\alpha) dr dxds \notag\\&= \int_0^t\int_{\mathbb{T}^d}\int_{\mathbb{R}^d}  f_\tau(s)  (\nabla u_L(s)r)\cdot\nabla_r(f_\tau(s)(1+\frac{\vert r\vert^2}{2})^\alpha) dr dxds.\notag
      \end{align}
      By using   \eqref{key-estimate-beta}, we obtain
            \begin{align}
          &\vert \int_0^t\int_{\mathbb{T}^d}\int_{\mathbb{R}^d}  f_\tau(s)  (\nabla u_L(s)r)\cdot\nabla_r(f_\tau(s)(1+\frac{\vert r\vert^2}{2})^\alpha) dr dxds\vert \notag\\
          &\leq \dfrac{\sqrt{\tau}}{2} \int_0^t\int_{\mathbb{T}^d}\int_{\mathbb{R}^d}  f_\tau^2(s)(1+\frac{\vert r\vert^2}{2})^\alpha dr dxds\notag\\
          &+\dfrac{1}{\sqrt{\tau}}\Vert \nabla u_L\Vert_\infty^2 \int_0^t\int_{\mathbb{T}^d}\int_{\mathbb{R}^d}    \vert r\vert^2\vert\nabla_r(f_\tau(s)(1+\frac{\vert r\vert^2}{2})^\alpha)\vert^2(1+\frac{\vert r\vert^2}{2})^{-\alpha} dr dxds\notag\\
          &\leq  \overline{\mathbf{K}}(\dfrac{T}{2}+\Vert \nabla u_L\Vert_\infty^2\zeta\alpha)\sqrt{\tau}:= \widetilde{\mathbf{C}}\sqrt{\tau}.
      \end{align}
         \item   Concerning the other terms, we have
      \begin{align*}
   J_\delta^M&:= \dfrac{1}{\zeta\alpha\tau}\int_0^t\int_{\mathbb{T}^d}\int_{\mathbb{R}^d} \alpha [\Div_r( g_\tau(s)r)]_\delta T_M[g_\tau(s)]_\delta dr dxds+\dfrac{1}{\zeta\alpha\tau}\int_0^t\langle [\Delta_rg_\tau(s) ]_\delta, T_M[g_\tau(s)]_\delta \rangle ds\notag\\ & +\dfrac{1}{\zeta\alpha\tau}\int_0^t\langle\dfrac{1}{2}[\Div_r(A_b(r) \nabla_rg_\tau(s))]_\delta, T_M[g_\tau(s)]_\delta \rangle ds\\
   &=-\dfrac{1}{\zeta\alpha\tau}\int_0^t\int_{\mathbb{T}^d}\int_{\mathbb{R}^d} \big(\alpha r [g_\tau(s)]_\delta +\nabla_r [g_\tau(s)]_\delta+\dfrac{1}{2}A_b(r) \nabla_r[g_\tau(s)]_\delta\big)\cdot\nabla_r(T_M[g_\tau(s)]_\delta) dr dxds.
\end{align*}
Notice that the smallest eigenvalue of $A_b(r)$ is $|r|^2$ and
by repeating the arguments presented in \autoref{subsection-uniform-beta}, we infer
\begin{align*}
    \liminf_{M\to +\infty}J^M_\delta
   &\geq \dfrac{1}{\zeta\alpha\tau}\int_0^t\int_{\mathbb{T}^d}\int_{\mathbb{R}^d}\vert \nabla_r (g_\tau(s)(1+\frac{\vert r\vert^2}{2})^\alpha)\vert^2(1+\frac{\vert r\vert^2}{2})^{1-\alpha} dr dxds.
\end{align*}
 Summarizing, we get
 \begin{align*}
    &\int_{\mathbb{T}^d}\int_{\mathbb{R}^d}g_\tau(t)^2(1+\frac{\vert r\vert^2}{2})^\alpha  drdx
    +\dfrac{1}{\zeta\alpha\tau}\int_0^t\int_{\mathbb{T}^d}\int_{\mathbb{R}^d}\vert \nabla_r (g_\tau(s)(1+\frac{\vert r\vert^2}{2})^\alpha)\vert^2(1+\frac{\vert r\vert^2}{2})^{1-\alpha} dr dxds\\
&\leq  \int_{\mathbb{T}^d}\int_{\mathbb{R}^d}(g_0^\tau)^2(1+\frac{\vert r\vert^2}{2})^\alpha  drdx+\widetilde{\mathbf{C}}\sqrt{\tau}.
\end{align*}
Hence \begin{align*}
    \sup_{t\in [0, T]}\int_{\mathbb{T}^d}\int_{\mathbb{R}^d}g_\tau(t)^2(1+\frac{\vert r\vert^2}{2})^\alpha  drdx
    &+\dfrac{1}{\zeta\alpha\tau}\int_0^T\int_{\mathbb{T}^d}\int_{\mathbb{R}^d}\vert \nabla_r (g_\tau(s)(1+\frac{\vert r\vert^2}{2})^\alpha)\vert^2(1+\frac{\vert r\vert^2}{2})^{1-\alpha} dr dxds\\
&\leq  \int_{\mathbb{T}^d}\int_{\mathbb{R}^d}(g_0^\tau)^2(1+\frac{\vert r\vert^2}{2})^\alpha  drdx+\widetilde{\mathbf{C}}\sqrt{\tau}.
\end{align*}
In other words, we obtain
\begin{align}
    &\sup_{t\in [0, T]}\int_{\mathbb{T}^d}\int_{\mathbb{R}^d}\vert f_\tau(t)-\rho(t)\otimes p_\alpha\vert^2(1+\frac{\vert r\vert^2}{2})^\alpha  drdx\notag\\
    &+\dfrac{1}{\zeta\alpha\tau}\int_0^T\int_{\mathbb{T}^d}\int_{\mathbb{R}^d}\vert \nabla_r (f_\tau-\rho \otimes p_\alpha)(s)(1+\frac{\vert r\vert^2}{2})^\alpha)\vert^2(1+\frac{\vert r\vert^2}{2})^{-\alpha+1} dr dxds \label{cv-inequality}\\
&\leq  \int_{\mathbb{T}^d}\int_{\mathbb{R}^d}\vert f_0^\tau-\rho_0\otimes p_\alpha\vert ^2(1+\frac{\vert r\vert^2}{2})^\alpha  drdx+\widetilde{\mathbf{C}}\sqrt{\tau}.\notag
\end{align}
      \end{itemize}
  Let us use \eqref{cv-inequality} to deduce the convergence results. Indeed, 
     \eqref{cv-inequality} implies
  \begin{align}
    &  \int_0^T\int_{\mathbb{T}^d}\int_{\mathbb{R}^d}\vert \nabla_r (f_\tau-\rho\otimes p_\alpha)(s)(1+\frac{\vert r\vert^2}{2})^\alpha)\vert^2(1+\frac{\vert r\vert^2}{2})^{-\alpha+1} dr dxds\label{strong-cv-1}\\
&\leq \zeta\alpha\tau( \int_{\mathbb{T}^d}\int_{\mathbb{R}^d}\vert f_0^\tau-\rho_0\otimes p_\alpha\vert ^2(1+\frac{\vert r\vert^2}{2})^\alpha  drdx+\widetilde{\mathbf{C}}\sqrt{\tau})\leq \overline{\mathbf{C}}\tau \to 0 \text{ as } \tau \to 0.\notag
\end{align}
Recall that for $\alpha>\dfrac{d}{2},$ $\nu_\alpha(dr)=\dfrac{1}{Z}(1+\frac{\vert r\vert^2}{2})^{-\alpha} dr$ is a Borel probability measure on $\mathbb{R}^d,$
set $$h_\alpha=(f_\tau-\rho \otimes p_\alpha)(1+\frac{\vert r\vert^2}{2})^\alpha,$$
by using  the weighted Poincar\'e inequality ( see \autoref{proposition-Poincare-weight}, Appendix B), there exists $C_\alpha>0$ such that
\begin{align*}
    \int_{\mathbb{R}^d} h_\alpha^2 d\nu_\alpha - (\int_{\mathbb{R}^d} h_\alpha d\nu_\alpha)^2 \leq C_\alpha\int_{\mathbb{R}^d} \vert \nabla h_\alpha\vert^2(1+\frac{\vert r\vert^2}{2}) d\nu_\alpha.
\end{align*}
On the other hand, notice that 
\begin{align*}
    \int_{\mathbb{R}^d} h_\alpha d\nu_\alpha= \int_{\mathbb{R}^d}(f_\tau-\rho\otimes p_\alpha)(1+\frac{\vert r\vert^2}{2})^\alpha d\nu_\alpha=\dfrac{1}{Z}(\int_{\mathbb{R}^d}f_\tau dr-\rho)=\dfrac{1}{Z}(\rho_\tau-\rho).
\end{align*}
Gathering the last inequalities to get 
\begin{align*}
    &  \int_0^T\int_{\mathbb{T}^d}\int_{\mathbb{R}^d}(f_\tau-\rho \otimes p_\alpha)^2(s)(1+\frac{\vert r\vert^2}{2})^\alpha dr dxds
\leq C_\alpha\overline{\mathbf{C}}\tau+\dfrac{1}{Z}\int_0^T\int_{\mathbb{T}^d} \vert \rho_\tau-\rho\vert^2 dxds.\notag
\end{align*}
By using \autoref{lem-cont-limit}, we have $\rho_\tau\to\rho$  in $L^2(\mathbb{T}^d\times (0,T))$ as $\tau \to 0$ and   the result holds. $\square$
\begin{remark}
    Since $\alpha>\dfrac{d}{2}$,  the last inequality implies the convergence in $L^1(\mathbb{T}^d\times \mathbb{R}^d\times (0,T)).$
\end{remark}
     \subsection{The limit as $\tau \to 0$ if  $0<\alpha\leq \dfrac{d}{2}$}
        We will discuss the following two cases.
        \begin{itemize}
            \item[i.]  $f^\tau_0 \in H_\alpha \cap H$ and there exists $\mathbf{\Lambda}>0$ independent of $\tau$  such that $\displaystyle\sup_{\tau>0}\Vert f^\tau_0 \Vert_{H_\alpha}^2\leq  \mathbf{\Lambda}. $
            \item[ii.]  $f^\tau_0 \in L^1(\mathbb{T}^d\times \mathbb{R}^d) \cap H$ and there exists $\mathbf{\Lambda}>0$ independent of $\tau$  such that $\displaystyle\sup_{\tau>0}\Vert f^\tau_0 \Vert_{L^1_{x,r}}\leq  \mathbf{\Lambda}. $
        \end{itemize}
        \subsubsection{The case $(f^\tau_0)_\tau$ is bounded in $H_\alpha$}
        Following
\autoref{propo-estimates},
there exists $\mathbf{K}=2\alpha\Vert \nabla_xu_L \Vert_\infty$  such that
\begin{align}
    \sup_{t\in [0, T]} \Vert f_\tau(t)\Vert_{H_\alpha}^2
    &+\dfrac{1}{\zeta\alpha\tau}\int_0^T\int_{\mathbb{T}^d}\int_{\mathbb{R}^d}\vert \nabla_r (f_\tau(s)P_{-\alpha})\vert^2(1+\frac{\vert r\vert^2}{2})^{1-\alpha} dr dxds\leq  e^{\mathbf{K}T}\mathbf{\Lambda}:= \overline{\mathbf{K}}. \label{key-estimate-beta-2case}
\end{align}
\begin{proposition}\label{prop-cv-to-0}
        Let $0<\alpha\leq \dfrac{d}{2}$ and assume that $(f_0^\tau)_\tau$ is bounded in $H_\alpha$. Then  there exists a subsequence of $(f_\tau)_\tau$ denoted by  $(f_{\tau_k})_{\tau_k}$ such that \begin{align}\label{weak-cv-info-prop}
    f_{\tau_k} \stackrel{\ast}{\rightharpoonup} 0 \text{ in } L^\infty(0,T;H_\alpha) \text{ as } \tau_k \to 0.
\end{align}
    \end{proposition}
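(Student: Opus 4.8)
The plan is to combine the uniform bound \eqref{key-estimate-beta-2case} (which gives weak-$*$ compactness) with the fact that the diffusive block in \eqref{equ-final-I} carries the singular factor $\tfrac1{\zeta\alpha\tau}$: after passing to the limit only that block can survive, forcing the limit $f^\ast$ to lie in the kernel of the coil--stretch operator $\mathcal L$ acting in the $r$-variable; but for $0<\alpha\le\tfrac d2$ the ground state $P_\alpha=(1+\tfrac{\vert r\vert^2}{2})^{-\alpha}$ fails to belong to $\mathbf X_\alpha$ (as recorded in the remark following the spectral proposition), so that kernel is trivial and $f^\ast$ must vanish.

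Concretely, I would first note that by \eqref{key-estimate-beta-2case} the family $(f_\tau)_\tau$ is bounded in $L^\infty(0,T;H_\alpha)$; since $H_\alpha$ is a separable Hilbert space, $L^\infty(0,T;H_\alpha)$ is the dual of the separable space $L^1(0,T;H_\alpha)$, so Banach--Alaoglu yields a subsequence $(f_{\tau_k})_k$ and $f^\ast\in L^\infty(0,T;H_\alpha)$ with $f_{\tau_k}\stackrel{\ast}{\rightharpoonup}f^\ast$; it then suffices to prove $f^\ast=0$. Next I would work with $h_\tau:=f_\tau(1+\tfrac{\vert r\vert^2}{2})^{\alpha}$. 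For a.e.\ fixed $t$, multiplication by $(1+\tfrac{\vert r\vert^2}{2})^{\alpha}$ is an isometric isomorphism of $H_\alpha$ onto $Y:=L^2\big(\mathbb{T}^d\times\mathbb{R}^d;(1+\tfrac{\vert r\vert^2}{2})^{-\alpha}\,drdx\big)$, hence $h_{\tau_k}\stackrel{\ast}{\rightharpoonup}h^\ast:=f^\ast(1+\tfrac{\vert r\vert^2}{2})^{\alpha}$ in $L^\infty(0,T;Y)$. The key observation is that \eqref{key-estimate-beta-2case} reads precisely
\begin{align*}
\int_0^T\int_{\mathbb{T}^d}\int_{\mathbb{R}^d}\vert\nabla_r h_\tau\vert^2\big(1+\tfrac{\vert r\vert^2}{2}\big)^{1-\alpha}\,drdxds\ \le\ \zeta\alpha\,\overline{\mathbf{K}}\,\tau\ \to\ 0,
\end{align*}
so $\nabla_r h_{\tau_k}\to0$ \emph{strongly} in $L^2\big(0,T;L^2((1+\tfrac{\vert r\vert^2}{2})^{1-\alpha}drdx)\big)$. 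Testing the identity $\int\nabla_r h_{\tau_k}\cdot\Phi=-\int h_{\tau_k}\,\Div_r\Phi$ against $\Phi\in C_c^\infty\big((0,T)\times\mathbb{T}^d\times\mathbb{R}^d;\mathbb{R}^d\big)$, the left-hand side tends to $0$ (Cauchy--Schwarz with the weight, the support of $\Phi$ being compact), while the right-hand side tends to $-\int h^\ast\Div_r\Phi$ because $(1+\tfrac{\vert r\vert^2}{2})^{\alpha}\Div_r\Phi\in L^1(0,T;Y)$; hence $\nabla_r h^\ast=0$ in $\mathcal{D}'$, i.e.\ $h^\ast=g(t,x)$ is independent of $r$ and $f^\ast=g\otimes P_\alpha$.

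To conclude I would exploit $f^\ast\in L^\infty(0,T;H_\alpha)$: for a.e.\ $t$,
\begin{align*}
\Vert f^\ast(t)\Vert_{H_\alpha}^2=\Big(\int_{\mathbb{R}^d}\big(1+\tfrac{\vert r\vert^2}{2}\big)^{-\alpha}dr\Big)\,\Vert g(t)\Vert_{L^2(\mathbb{T}^d)}^2<\infty,
\end{align*}
and since $\int_{\mathbb{R}^d}(1+\tfrac{\vert r\vert^2}{2})^{-\alpha}dr=+\infty$ when $0<\alpha\le\tfrac d2$, this forces $g\equiv0$, hence $f^\ast\equiv0$, which gives \eqref{weak-cv-info-prop}. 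The step I expect to be the main obstacle is the identification of the limiting $r$-gradient: one has to pass to the limit in $\nabla_r h_{\tau_k}$ within the correct pair of weighted (dual) spaces and use that the dissipation in \eqref{key-estimate-beta-2case} is exactly $O(\tau)$. An alternative that avoids weighted distributions is to multiply the weak formulation \eqref{equ-final-I} by $\tau$, let $\tau\to0$ (the non-singular terms carry a factor $\tau$ and vanish thanks to the $H_\alpha$-bound and the compact $r$-support of the test function) to get that $f^\ast(t,x,\cdot)$ solves $\mathcal L f^\ast=0$ in $\mathcal{D}'(\mathbb{R}^d)$ for a.e.\ $(t,x)$, and then reproduce verbatim the cut-off energy computation from the proof of \autoref{prop-cv-1} to deduce $\nabla_r\big(f^\ast(1+\tfrac{\vert r\vert^2}{2})^{\alpha}\big)=0$.
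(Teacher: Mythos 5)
Your proposal is correct and follows essentially the same route as the paper's proof: weak-$*$ compactness in $L^\infty(0,T;H_\alpha)$ from the uniform bound \eqref{key-estimate-beta-2case}, the $O(\tau)$ dissipation forcing $\nabla_r\big(f_{\tau_k}(1+\tfrac{\vert r\vert^2}{2})^{\alpha}\big)\to 0$ so that the limit must have the form $h_{t,x}\otimes(1+\tfrac{\vert r\vert^2}{2})^{-\alpha}$, and the non-integrability of this profile for $0<\alpha\le\tfrac{d}{2}$ combined with the limit's membership in $L^\infty(0,T;H_\alpha)$ forcing it to vanish. Your duality/integration-by-parts identification of $\nabla_r h^\ast=0$ is just a slightly more explicit rendering of the paper's corresponding step, not a different method.
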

    \begin{proof}
By using \eqref{key-estimate-beta-2case} and Banach–Alaoglu–Bourbaki theorem  (see \textit{e.g.} \cite[Theorem 3.16]{Brezis}),  one gets the existence of subsequence  of $(f_\tau)_\tau$ (denoted by $(f_{\tau_k})_{\tau_k}$ ) and $\overline{f}\in L^\infty(0,T;H_\alpha)$ such that
\begin{align}\label{weak-cv-info}
    f_{\tau_k} \stackrel{\ast}{\rightharpoonup} \overline{f} \text{ in } L^\infty(0,T;H_\alpha) \text{ as } \tau_k \to 0,
\end{align}
and 
\begin{align}\label{ineq-beta-0}\int_0^T\int_{\mathbb{T}^d}\int_{\mathbb{R}^d}\vert \nabla_r (f_{\tau_k}(s)P_{-\alpha})\vert^2(1+\frac{\vert r\vert^2}{2})^{-\alpha+1} dr dxds\leq   \overline{\mathbf{K}}\zeta \alpha\tau_k. 
\end{align}
From \eqref{ineq-beta-0}, we obtain 
\begin{align*}
    \vert \nabla_r (f_{\tau_k}(s)P_{-\alpha})\vert^2(1+\frac{\vert r\vert^2}{2})^{1-\alpha} \to 0 \text{ in } L^1([0,T]\times \mathbb{T}^d \times \mathbb{R}^d) \text{ as } \tau_k \to 0.
\end{align*}
Therefore, up to subsequence if necessary, we get 
\begin{align}\label{cv-dist-almost}
    \vert \nabla_r (f_{\tau_k}(s)P_{-\alpha})\vert^2 \to 0 \text{ a.e.  in } [0,T]\times \mathbb{T}^d \times \mathbb{R}^d \text{ as } \tau_k \to 0,
\end{align}
since $(1+\frac{\vert r\vert^2}{2})^{1-\alpha}>0.$ On the other hand,  thanks to  \eqref{weak-cv-info} and \eqref{cv-dist-almost}   we get
the following convergence in the sense of distributions
\begin{align*}
  \langle  \nabla_r (f_{\tau_k}(s)P_{-\alpha}), \Phi\rangle_{\mathcal{D}^\prime,\mathcal{D}}  \to  \langle \nabla_r (\overline{f}(s)P_{-\alpha}), \Phi\rangle_{\mathcal{D}^\prime,\mathcal{D}}=0,\quad \forall \Phi \in \mathcal{D}, 
\end{align*}
where $\mathcal{D}= C_c^\infty(]0,T[)\otimes C^\infty(\mathbb{T}^d)\otimes C^\infty_c(\mathbb{R}^d).$ Hence, we deduce the existence of distribution $h_{t,x}$ such that 
$$ \overline{f}P_{-\alpha} =h_{t,x} \text{ in  } \mathcal{D}^\prime \Leftrightarrow  \overline{f} =h_{t,x}\otimes P_{\alpha}(r)=h_{t,x}\otimes(1+\frac{\vert r\vert^2}{2})^{-\alpha} \text{ in  } \mathcal{D}^\prime.$$
We recall that $\int_{\mathbb{R}^d}(1+\frac{\vert r\vert^2}{2})^{-\alpha} dr$ diverges if $0<\alpha\leq \dfrac{d}{2}$. Since  
    $\overline{f} \in L^\infty(0,T;H_\alpha)$ from  \eqref{weak-cv-info}, we deduce that $h_{t,x} \equiv 0$.   
    \end{proof}
        \subsubsection{The $L^1-$setting} We prove the following result.
\begin{proposition}\label{prop-est-L1}\label{weak-cv-info-prop}
Assume  that   $f^\tau_0 \in L^1(\mathbb{T}^d\times \mathbb{R}^d) \cap H$ and there exists $\mathbf{\Lambda}>0$ (independent of $\tau$)  such that $\displaystyle\sup_{\tau>0}\Vert f^\tau_0 \Vert_{L^1}\leq  \mathbf{\Lambda}. $ Then
\begin{align}\label{est-L1-key}
    \sup_{t\in[0,T]}  \int_{\mathbb{T}^d}\int_{\mathbb{R}^d}\vert f_\tau(t)\vert  drdx \leq \int_{\mathbb{T}^d}\int_{\mathbb{R}^d}\vert f_0^\tau\vert drdx \leq \mathbf{\Lambda}.
\end{align}
Moreover, there exists a subsequence of $(f_\tau)_\tau$ (denoted by $(f_{\tau_k } )_k$)  and $\nu\in L^\infty(0,T;\mathcal{M}(\mathbb{T}^d\times \mathbb{R}^d))$ such that 
\begin{align}
      f_{\tau_k } \stackrel{\ast}{\rightharpoonup} \nu  \text{ in } L^\infty(0,T;\mathcal{M}(\mathbb{T}^d\times \mathbb{R}^d)) \text{ as } k \to +\infty,
\end{align}
and $\nu$ solves, in the sense of distributions, the following equation
\begin{align}
    \Div_r\big(\alpha r\nu+\nabla_r \nu+\dfrac{1}{2}A_b(r)\nabla_r \nu\big)=0 \text{ in } \mathcal{D}^\prime.
\end{align}
\end{proposition}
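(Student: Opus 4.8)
The plan is to first establish the uniform $L^1$ bound \eqref{est-L1-key} by a renormalization (DiPerna--Lions type) argument, and then to extract a measure-valued limit and pass to the limit in the equation rescaled by $\zeta\alpha\tau$. For \eqref{est-L1-key}, I would fix $\varepsilon>0$, take $\beta_\varepsilon(s)=\sqrt{s^2+\varepsilon^2}-\varepsilon$ (convex, $C^2$, with $\beta_\varepsilon(0)=0$, $|\beta_\varepsilon'|\le1$, $\beta_\varepsilon''\ge0$ and $|s|-\varepsilon\le\beta_\varepsilon(s)\le|s|$), regularize \eqref{equ-final-I} in the $x$-variable with the kernel $\Theta_\delta$ exactly as in the proofs of \autoref{lemma-appendix} and \autoref{Lemma-uniq-cont}, use $\beta_\varepsilon'([f_\tau]_\delta)$ as a test function, and send $\delta\to0$ with the commutator estimate \autoref{Lemma-tech-1} (this is legitimate because $f_\tau\in L^\infty(0,T;H)$ and $\nabla_r f_\tau\in L^2(0,T;H)$ by \eqref{regularity-f}). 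The two transport terms then vanish, since $\int_{\mathbb{T}^d} u_L\cdot\nabla_x\beta_\varepsilon(f_\tau)\,dx=0$ because $\Div_x u_L=0$, and $\int_{\mathbb{R}^d}(\nabla u_L r)\cdot\nabla_r\beta_\varepsilon(f_\tau)\,dr=0$ because $\Div_r(\nabla u_L r)=\operatorname{tr}(\nabla u_L)=\Div_x u_L=0$.

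For the penalized term, integration by parts and $\nabla_r\beta_\varepsilon'(f_\tau)=\beta_\varepsilon''(f_\tau)\nabla_r f_\tau$ give
\[
\frac{1}{\zeta\alpha\tau}\Big\langle\Div_r\big(\alpha r f_\tau+\nabla_r f_\tau+\tfrac12 A_b(r)\nabla_r f_\tau\big),\,\beta_\varepsilon'(f_\tau)\Big\rangle
=-\frac{1}{\zeta\alpha\tau}\int_{\mathbb{T}^d}\!\int_{\mathbb{R}^d}\big(I+\tfrac12 A_b(r)\big)\nabla_r f_\tau\cdot\nabla_r f_\tau\,\beta_\varepsilon''(f_\tau)\,dr\,dx+\frac{d}{\zeta\tau}\int_{\mathbb{T}^d}\!\int_{\mathbb{R}^d}\gamma_\varepsilon(f_\tau)\,dr\,dx,
\]
where $\gamma_\varepsilon(s):=s\beta_\varepsilon'(s)-\beta_\varepsilon(s)=\varepsilon\big(1-\varepsilon(s^2+\varepsilon^2)^{-1/2}\big)$ satisfies $0\le\gamma_\varepsilon(s)\le\min(\varepsilon,|s|)$. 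The first term on the right is $\le0$ since $I+\tfrac12 A_b(r)$ is positive semidefinite (its least eigenvalue equals $1+\tfrac12|r|^2$) and $\beta_\varepsilon''\ge0$; the second tends to $0$ as $\varepsilon\to0$ by dominated convergence, with majorant $|f_\tau|\in L^1((0,T)\times\mathbb{T}^d\times\mathbb{R}^d)$. Letting $\varepsilon\to0$ and using $\beta_\varepsilon(f_\tau(t))\to|f_\tau(t)|$, $\beta_\varepsilon(f_0^\tau)\to|f_0^\tau|$ in $L^1$ yields $\int|f_\tau(t)|\,dr\,dx\le\int|f_0^\tau|\,dr\,dx$, i.e.\ \eqref{est-L1-key}.

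By \eqref{est-L1-key}, $(f_\tau)_\tau$ is bounded in $L^\infty(0,T;L^1(\mathbb{T}^d\times\mathbb{R}^d))$, hence in $L^\infty(0,T;\mathcal{M}(\mathbb{T}^d\times\mathbb{R}^d))$ through the isometric embedding $g\mapsto g\,dx\,dr$. Since $C_0(\mathbb{T}^d\times\mathbb{R}^d)$ is separable and $L^\infty(0,T;\mathcal{M}(\mathbb{T}^d\times\mathbb{R}^d))$ is identified (weakly-$*$ measurably, as with the space $H$ in \autoref{section-prel}) with the dual of $L^1(0,T;C_0(\mathbb{T}^d\times\mathbb{R}^d))$, the sequential Banach--Alaoglu theorem gives a subsequence $(f_{\tau_k})_k$ and $\nu\in L^\infty(0,T;\mathcal{M}(\mathbb{T}^d\times\mathbb{R}^d))$ with $f_{\tau_k}\stackrel{\ast}{\rightharpoonup}\nu$. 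To identify the limit equation I would multiply the weak form of \eqref{Limit-FP-eqn-2} by $\zeta\alpha\tau$ and test against an arbitrary $\varphi\in C_c^\infty((0,T)\times\mathbb{T}^d\times\mathbb{R}^d)$, moving every derivative onto $\varphi$:
\[
\zeta\alpha\tau\int_0^T\!\!\int_{\mathbb{T}^d}\!\!\int_{\mathbb{R}^d} f_\tau\big(\partial_t\varphi+u_L\cdot\nabla_x\varphi+(\nabla u_L r)\cdot\nabla_r\varphi\big)
=-\int_0^T\!\!\int_{\mathbb{T}^d}\!\!\int_{\mathbb{R}^d} f_\tau\big(-\alpha r\cdot\nabla_r\varphi+\Delta_r\varphi+\tfrac12\Div_r(A_b(r)\nabla_r\varphi)\big).
\]
Along $\tau=\tau_k$, the left-hand side equals $\zeta\alpha\tau_k$ times a quantity bounded by $\|f_{\tau_k}\|_{L^\infty(0,T;\mathcal{M})}$ and the $L^1(0,T;C_0)$-norms of $\partial_t\varphi,\nabla_x\varphi,(\nabla u_L r)\cdot\nabla_r\varphi$, hence tends to $0$; and since $-\alpha r\cdot\nabla_r\varphi+\Delta_r\varphi+\tfrac12\Div_r(A_b(r)\nabla_r\varphi)\in C_c((0,T)\times\mathbb{T}^d\times\mathbb{R}^d)\subset L^1(0,T;C_0(\mathbb{T}^d\times\mathbb{R}^d))$, the right-hand side converges by weak-$*$ convergence to $-\langle\nu,\,-\alpha r\cdot\nabla_r\varphi+\Delta_r\varphi+\tfrac12\Div_r(A_b(r)\nabla_r\varphi)\rangle$. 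Thus $\langle\nu,\,-\alpha r\cdot\nabla_r\varphi+\Delta_r\varphi+\tfrac12\Div_r(A_b(r)\nabla_r\varphi)\rangle=0$ for all such $\varphi$, i.e.\ $\Div_r(\alpha r\nu+\nabla_r\nu+\tfrac12 A_b(r)\nabla_r\nu)=0$ in $\mathcal{D}'$.

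I expect the main obstacle to be the renormalization in the first step: combining the $x$-convolution with the commutator lemma to pass safely to the limit $\delta\to0$, and then showing that the drift contribution $\frac{d}{\zeta\tau}\int\gamma_\varepsilon(f_\tau)$ vanishes as $\varepsilon\to0$ while the dissipative term retains its favorable sign. The remaining steps are soft functional-analytic and density arguments; note that no uniqueness of $\nu$ is asserted here, and it indeed fails (see \autoref{Subsection-discussion-uniq}).
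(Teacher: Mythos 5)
Your overall strategy coincides with the paper's: a renormalization argument for the uniform $L^1$ bound, weak-$*$ compactness in $L^\infty(0,T;\mathcal{M}(\mathbb{T}^d\times \mathbb{R}^d))$ via Banach--Alaoglu, and passage to the limit after multiplying the weak formulation by $\zeta\alpha\tau$; the last two steps are correct as you present them (the paper uses tensorized test functions $\xi\otimes\vartheta\otimes\varphi$ and integrates by parts in time, which is equivalent to your use of $\varphi\in C_c^\infty((0,T)\times\mathbb{T}^d\times\mathbb{R}^d)$). The algebra in your first step is also right: with $\beta_\varepsilon(s)=\sqrt{s^2+\varepsilon^2}-\varepsilon$ the drift produces exactly the defect $\gamma_\varepsilon(s)=s\beta_\varepsilon'(s)-\beta_\varepsilon(s)$ times $d/(\zeta\tau)$, and the dissipative term keeps the good sign; the paper instead tests with an approximation $\eta_\epsilon$ of the sign function and absorbs part of the drift into the dissipation by Young's inequality, a purely cosmetic difference.

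There is, however, one genuine gap. To make $\frac{d}{\zeta\tau}\int_0^t\!\int_{\mathbb{T}^d}\!\int_{\mathbb{R}^d}\gamma_\varepsilon(f_\tau)\,dr\,dx\,ds$ vanish as $\varepsilon\to0$ you invoke dominated convergence ``with majorant $|f_\tau|\in L^1((0,T)\times\mathbb{T}^d\times\mathbb{R}^d)$''. But $f_\tau\in L^1$ is not known at that stage: the available regularity \eqref{regularity-f} only gives $f_\tau\in L^\infty(0,T;H)$, and $H$ does \emph{not} embed into $L^1(\mathbb{T}^d\times\mathbb{R}^d)$ (a function behaving like $|r|^{-d}(\log|r|)^{-1}$ for large $|r|$ lies in $H$ but not in $L^1$), so asserting integrability of $|f_\tau|$ here is circular --- membership of $f_\tau$ in $L^\infty(0,T;L^1)$ is precisely what \eqref{est-L1-key} is meant to establish, and the pointwise bound $\gamma_\varepsilon\le\varepsilon$ is useless on the infinite-measure domain. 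The paper closes this loop by an intermediate bootstrap: it first keeps the $\varepsilon$-dependent remainder, applies Gronwall to obtain the crude, $\tau$-dependent bound \eqref{esti-L1-2}, which supplies the qualitative information $f_\tau\in L^1([0,T]\times\mathbb{T}^d\times\mathbb{R}^d)$, and only then lets $\varepsilon\to0$ by dominated convergence in \eqref{est-L1-1} to remove the remainder and get \eqref{est-L1-key}. Your argument can be repaired in the same spirit within your own framework: since $0\le\gamma_\varepsilon(s)\le\beta_\varepsilon(s)$ pointwise (this reduces to $s\beta_\varepsilon'(s)\le 2\beta_\varepsilon(s)$, i.e.\ $0\le s^4$) and $\int\beta_\varepsilon(f_\tau(t))\,dr\,dx\le\frac{1}{2\varepsilon}\Vert f_\tau(t)\Vert_{L^2}^2<\infty$, Gronwall applied to $t\mapsto\int\beta_\varepsilon(f_\tau(t))\,dr\,dx$ yields $\sup_t\int\beta_\varepsilon(f_\tau(t))\,dr\,dx\le e^{dT/(\zeta\tau)}\Vert f_0^\tau\Vert_{L^1}$, hence by Fatou as $\varepsilon\to0$ one gets $f_\tau\in L^\infty(0,T;L^1)$ with a $\tau$-dependent constant; after this, your dominated-convergence step becomes legitimate and delivers the uniform bound \eqref{est-L1-key}. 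As written, though, that justification is missing.
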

\begin{proof} 
Let $\epsilon>0$, we define  $\eta_\epsilon$ the odd continuous function on $\mathbb{R}$  as follows
\begin{align*}
    \eta_\epsilon(s)&=\begin{cases}
        1 & \textit{ if } \epsilon < s ,\\
  \dfrac{1}{2}\big[1+\sin{\big(\dfrac{\pi}{2\epsilon}(2s-\epsilon)\big)}\big], & \textit{ if } 0\leq s\leq \epsilon.
    \end{cases}
    \end{align*}
    Notice that $\eta_\epsilon\in C^1(\mathbb{R}),  \eta_\epsilon^{\prime\prime}\in L^\infty(\mathbb{R})$ and satisfies:
    \begin{itemize}
        \item $\vert  \eta_\epsilon (s)\vert \leq 1\wedge \dfrac{\pi s}{\epsilon}, \forall s    \in \mathbb{R};$ \qquad $\bullet$ $\eta_\epsilon^\prime \geq 0, \quad\text{supp} (\eta_\epsilon^\prime) \subset [-\epsilon,\epsilon]$ and $\vert  \eta_\epsilon^\prime(s)\vert \leq \dfrac{\pi}{2\epsilon},  \forall s    \in \mathbb{R}$,
        \item $\eta_\epsilon(s)$ converges pointwise to $\text{sign}(s)$   as $\epsilon \to 0.$
    \end{itemize}
    Let $\delta,\epsilon>0,$     note that $\eta_\epsilon([f_\tau(\cdot)]_\delta) \in \mathcal{Y}$. By using \eqref{eqn-regulari-test}, we get  for any $t\in [0,T]$
\begin{align}&\int_{\mathbb{T}^d}\int_{\mathbb{R}^d}[f_\tau(t)]_\delta\eta_\epsilon([f_\tau(t)]_\delta) drdx-\int_{\mathbb{T}^d}\int_{\mathbb{R}^d}[f_0^\tau]_\delta\eta_\epsilon([f_0^\tau]_\delta) drdx\notag\\&= -\int_0^t\int_{\mathbb{T}^d}\int_{\mathbb{R}^d}[u_L(s)\cdot \nabla_xf_\tau(s)]_\delta \eta_\epsilon([f_\tau(s)]_\delta)+ [(\nabla u_L(s)r) \cdot\nabla_rf_\tau(s)]_\delta\eta_\epsilon([f_\tau(s)]_\delta) dr dxds\notag\\	&+	\dfrac{1}{\zeta\alpha\tau}\int_0^t\int_{\mathbb{T}^d}\int_{\mathbb{R}^d} \alpha [\Div_r( f_\tau(s)r)]_\delta\eta_\epsilon([f_\tau(s)]_\delta) dr dxds\notag \\&+\dfrac{1}{\zeta\alpha\tau}\int_0^t\langle[\Delta_rf_\tau(s) ]_\delta,\eta_\epsilon([f_\tau(s)]_\delta)  \rangle+\dfrac{1}{2}\langle[\Div_r(A_b(r) \nabla_rf_\tau(s))]_\delta,\eta_\epsilon([f_\tau(s)]_\delta) \rangle ds.\notag\end{align}
First, note that 
\begin{align*}
    &\int_0^t\int_{\mathbb{T}^d}\int_{\mathbb{R}^d}[u_L(s)\cdot \nabla_xf_\tau(s)]_\delta \eta_\epsilon([f_\tau(s)]_\delta)drdxds\\&=   \int_0^t\int_{\mathbb{T}^d}\int_{\mathbb{R}^d}\big([u_L(s)\cdot \nabla_xf_\tau(s)]_\delta -   u_L(s)\cdot \nabla_x[f_\tau(s)]_\delta \big)\eta_\epsilon([f_\tau(s)]_\delta)drdxds\\&\quad+   \int_0^t\int_{\mathbb{T}^d}\int_{\mathbb{R}^d}u_L(s)\cdot \nabla_x[f_\tau(s)]_\delta \eta_\epsilon([f_\tau(s)]_\delta) drdxds=A_\delta+B_\delta \to 0 \text{ as } \delta \to 0.
\end{align*}
Concerning the term $A_\delta$,  recall that $\vert \eta_\epsilon([f_\tau(s)]_\delta)\vert \leq \dfrac{\pi}{\epsilon}\vert [f_\tau(s)]_\delta\vert, $ which ensures that $(\eta_\epsilon([f_\tau(s)]_\delta))_\delta$ is bounded in  $L^2(\mathbb{T}^d\times \mathbb{R}^d\times(0,T))$. Therefore, we can use the commutator estimate ( see \autoref{Lemma-tech-1}) to get $\displaystyle\lim_{\delta \to 0}A_\delta=0.$
On the other hand,  denote by  $N_\epsilon$  the primitive function of $\eta_\epsilon$, then  by using "divergence theorem" we get
\begin{align*}
  B_\delta=   \int_0^t\int_{\mathbb{T}^d}\int_{\mathbb{R}^d}u_L(s)\cdot \nabla_x[f_\tau(s)]_\delta \eta_\epsilon([f_\tau(s)]_\delta) drdxds=\int_0^t\int_{\mathbb{R}^d} \int_{\mathbb{T}^d}\Div_x\big( u_L(s) N_\epsilon([f_\tau(s)]_\delta)\big) dxdrds=0.
\end{align*}
Secondly, we have    $\displaystyle\lim_{\delta}  \int_0^t\int_{\mathbb{T}^d}\int_{\mathbb{R}^d}[\Div_r(\nabla u_L(s)r)f_\tau(s)]_{\delta} \eta_\epsilon([f_\tau(s)]_\delta)drdx   ds=0.$ Indeed, note that
\begin{align*}
    &\int_0^t\int_{\mathbb{T}^d}\int_{\mathbb{R}^d}[\Div_r(\nabla u_L(s)rf_\tau(s))]_{\delta} \eta_\epsilon([f_\tau(s)]_\delta)drdx   ds\\&=\int_0^t\int_{\mathbb{T}^d}\int_{\mathbb{R}^d}\big([\Div_r(\nabla u_L(s)r f_\tau(s))]_{\delta}-\Div_r(\nabla u_L(s)r[f_\tau(s)]_{\delta})\big)\eta_\epsilon([f_\tau(s)]_\delta)drdx   ds\\
    &+\int_0^t\int_{\mathbb{T}^d}\int_{\mathbb{R}^d}\Div_r(\nabla u_L(s)r[f_\tau(s)]_{\delta}) \eta_\epsilon([f_\tau(s)]_\delta)drdx   ds=C_\delta+D_\delta.
\end{align*}
Recall that    $\Div_r(\nabla u_L(s)r)=0$, by   using "divergence theorem" we get
\begin{align*}
    D_\delta&=\int_0^t\int_{\mathbb{T}^d}\int_{\mathbb{R}^d}\Div_r(\nabla u_L(s)r[f_\tau(s)]_{\delta} )\eta_\epsilon([f_\tau(s)]_\delta)drdx   ds\\&=\int_0^t\int_{\mathbb{T}^d}\int_{\mathbb{R}^d}\Div_r(\nabla u_L(s)rN_\epsilon([f_\tau(s)]_\delta)drdx   ds=0.
\end{align*}
Concerning $C_\delta$, note that
\begin{align*}
   & \int_0^t\int_{\mathbb{T}^d}\int_{\mathbb{R}^d}\big([\Div_r(\nabla u_L(s)rf_\tau(s))]_{\delta}-\Div_r(\nabla u_L(s)r[f_\tau(s)]_{\delta})\big)\eta_\epsilon([f_\tau(s)]_\delta)drdx   ds\\
&=\int_0^t\int_{\mathbb{T}^d}\int_{\mathbb{R}^d} \big(\Div_r(\Theta_\delta*(\nabla u_L(s)rf_\tau(s))-\Div_r(\nabla u_L(s)r\Theta_\delta *   f_\tau(s) )\big)\eta_\epsilon(\Theta_\delta *   f_\tau(s) )  drdxds\\
    &=-\int_0^t\int_{\mathbb{T}^d}\int_{\mathbb{R}^d} \big(\Theta_\delta*(\nabla u_L(s)rf_\tau(s)-(\nabla u_L(s)r\Theta_\delta *   f_\tau(s) \big)\Theta_\delta *   \nabla_rf_\tau(s)\eta_\epsilon^\prime(\Theta_\delta *   f_\tau(s) )drdx   ds.
\end{align*}
On  the other   hand,  for any $(s,x,r)\in [0,T]\times \T^d\times \R^d$  note    that
\begin{align*}
    &\big(\Theta_\delta*(\nabla u_L(\cdot)rf_\tau\big)(s,x,r)-\nabla u_L(s,x)r(\Theta_\delta *   f_\tau)(s,x,r)\\
    &=\int_{\R^d}[\nabla    u_L(x-y,s)-\nabla   u_L(x,s)]r\Theta_\delta(y)f_\tau(s,x-y,r)dy.
\end{align*}
By   mean-value theorem we  get
$    \vert   \nabla u_L(x-y,s)    -\nabla    u_L(x,s)\vert \leq        \vert       y\vert \Vert   u_L\Vert_{C^2}$
and 
\begin{align*}
    &\vert  \big(\Theta_\delta*(\nabla u_L(\cdot)rf_\tau\big)(s,x,r)-\nabla u_L(s,x)r(\Theta_\delta *   f_\tau)(s,x,r)\vert    \\
    &\leq  \Vert   u_L\Vert_{C^2} \int_{\R^d}\vert    y\vert  \vert   r\vert  \Theta_\delta(y)\vert   f_\tau(s,x-y,r)\vert       dy\leq  \delta\Vert   u_L\Vert_{C^2} \int_{\R^d}   \Theta_\delta(y)\vert   r\vert \vert   f_\tau(s,x-y,r)\vert   dy,
\end{align*}
since   $\text{supp }    \Theta\subset B(0,1).$    Therefore,  we  get
\begin{align*}
    &\int_0^t\int_{\mathbb{T}^d}\int_{\mathbb{R}^d} \vert \big(\Theta_\delta*(\nabla u_L(s)rf_\tau(s))-(\nabla u_L(s)r\Theta_\delta *   f_\tau(s) \big)\Theta_\delta *   \nabla_rf_\tau(s) \eta_\epsilon^\prime(\Theta_\delta *   f_\tau(s) )\vert  drdxds\\
    &\leq  \dfrac{\pi}{2\epsilon}  \delta\Vert   u_L\Vert_{C^2}    \int_0^t\Vert \int_{\R^2}   \Theta_\delta(y)\vert   r\vert \vert   f_\tau(s,x-y,r)\vert   dy \Vert_{L^2_{x,r}} \Vert\Theta_\delta *   \nabla_rf_\tau(s)\Vert_{L^2_{x,r}} ds\\
    &\leq   \dfrac{\pi}{2\epsilon} \delta\Vert   u_L\Vert_{C^2}    \int_0^t\Vert \Theta_\delta *\vert   r\vert \vert   f_\tau(s)\vert    \vert    \Vert_{L^2_{x,r}} \Vert\Theta_\delta *   \nabla_rf_\tau(s)\Vert_{L^2_{x,r}} ds\\
    &\leq   \dfrac{\pi}{2\epsilon} \delta\Vert   u_L\Vert_{C^2}    \int_0^t\Vert    f_\tau(s)        \Vert_H \Vert   \nabla_rf_\tau(s)\Vert_{L^2_{x,r}} ds\to    0   \text{  as    } \delta  \to 0.
\end{align*}
Thanks to the properties of the matrix $A_b$  and $\eta_\epsilon^\prime\geq 0,$ we deduce
\begin{align}
   & \dfrac{1}{\zeta\alpha\tau}\int_0^t\langle[\Delta_rf_\tau(s) ]_\delta,\eta_\epsilon([f_\tau(s)]_\delta)  \rangle+\dfrac{1}{2}\langle[\Div_r(A_b(r) \nabla_rf_\tau(s))]_\delta,\eta_\epsilon([f_\tau(s)]_\delta) \rangle ds\label{est-cancelation}\\&=-  \dfrac{1}{\zeta\alpha\tau}\int_0^t \int_{\mathbb{T}^d}\int_{\mathbb{R}^d}\big(\vert [\nabla_rf_\tau(s) ]_\delta\vert^2+\dfrac{1}{2}A_b(r) \nabla_r[f_\tau(s)]_\delta\cdot \nabla_r[f_\tau(s)]_\delta\big)\eta_\epsilon^\prime([f_\tau(s)]_\delta) drdx ds\notag\\
   &\leq -  \dfrac{1}{\zeta\alpha\tau}\int_0^t \int_{\mathbb{T}^d}\int_{\mathbb{R}^d}(1+\dfrac{\vert r\vert^2}{2})\vert [\nabla_rf_\tau(s) ]_\delta\vert^2\eta_\epsilon^\prime([f_\tau(s)]_\delta) drdx ds\notag.
\end{align}
 Finally, we have
\begin{align*}
    &\dfrac{1}{\tau}\int_0^t\int_{\mathbb{T}^d}\int_{\mathbb{R}^d}  [\Div_r( f_\tau(s)r)]_\delta\eta_\epsilon([f_\tau(s)]_\delta) dr dxds\\&=-   \dfrac{1}{\zeta\tau}\int_0^t\int_{\mathbb{T}^d}\int_{\mathbb{R}^d}  [ f_\tau(s)]_\delta r\cdot\nabla_r[f_\tau(s)]_\delta\eta_\epsilon^\prime([f_\tau(s)]_\delta) dr dxds\\
    &\leq  \dfrac{1}{\zeta\alpha\tau}\int_0^t \int_{\mathbb{T}^d}\int_{\mathbb{R}^d}\dfrac{\vert r\vert^2}{2}\vert [\nabla_rf_\tau(s) ]_\delta\vert^2\eta_\epsilon^\prime([f_\tau(s)]_\delta) drdx ds+\dfrac{\alpha}{2\zeta\tau} \int_0^t \int_{\mathbb{T}^d}\int_{\mathbb{R}^d} [f_\tau(s)]_\delta^2\eta_\epsilon^\prime([f_\tau(s)]_\delta) drdxds.
\end{align*}
By using \eqref{est-cancelation}, then letting $\delta \to 0$ we obtain
\begin{align*}
    \int_{\mathbb{T}^d}\int_{\mathbb{R}^d}f_\tau(t)\eta_\epsilon(f_\tau(t)) drdx &\leq \int_{\mathbb{T}^d}\int_{\mathbb{R}^d}f_0^\tau\eta_\epsilon(f_0^\tau) drdx+\dfrac{\alpha}{2\zeta\tau} \int_0^t \int_{\mathbb{T}^d}\int_{\mathbb{R}^d} f_\tau^2(s)\eta_\epsilon^\prime(f_\tau(s)) drdxds\\
   &\leq  \int_{\mathbb{T}^d}\int_{\mathbb{R}^d}\vert f_0^\tau\vert drdx+\dfrac{\alpha}{2\zeta\tau} \int_0^t \int_{\mathbb{T}^d}\int_{\mathbb{R}^d} \dfrac{\pi}{2\epsilon}f_\tau^2(s)1_{\{  \vert f_\tau \vert \leq \epsilon\}} drdxds \\&\leq \int_{\mathbb{T}^d}\int_{\mathbb{R}^d}\vert f_0^\tau\vert drdx+\dfrac{\alpha\pi}{4\zeta\tau} \int_0^t \int_{\mathbb{T}^d}\int_{\mathbb{R}^d} \vert f_\tau(s) \vert 1_{\{  \vert f_\tau \vert \leq \epsilon\}} drdxds.
    \end{align*}
By using Fatou's lemma (see \textit{e.g.} \cite[Theorem 4.2]{Brezis}), we get 
\begin{align*}
    \int_{\mathbb{T}^d}\int_{\mathbb{R}^d}\vert f_\tau(t)\vert  drdx \leq \liminf_{\epsilon \to 0}\int_{\mathbb{T}^d}\int_{\mathbb{R}^d}f_\tau(t)\eta_\epsilon(f_\tau(t)) drdx.
\end{align*}
In conclusion, we get
\begin{align}\label{est-L1-1}
     \int_{\mathbb{T}^d}\int_{\mathbb{R}^d}\vert f_\tau(t)\vert  drdx \leq \int_{\mathbb{T}^d}\int_{\mathbb{R}^d}\vert f_0^\tau\vert drdx+\dfrac{\alpha\pi}{4\zeta\tau} \int_0^t \int_{\mathbb{T}^d}\int_{\mathbb{R}^d} \vert f_\tau(s) \vert 1_{\{  \vert f_\tau \vert \leq \epsilon\}} drdxds. 
\end{align}
By using Gronwall inequality, we deduce
\begin{align}\label{esti-L1-2}
  \sup_{t\in[0,T]}   \int_{\mathbb{T}^d}\int_{\mathbb{R}^d}\vert f_\tau(t)\vert  drdx \leq  \exp{(\dfrac{\alpha\pi}{4\zeta\tau}T)}\int_{\mathbb{T}^d}\int_{\mathbb{R}^d}\vert f_0^\tau\vert drdx \leq \exp{(\dfrac{\alpha\pi}{4\zeta\tau}T)} \mathbf{\Lambda}.  
\end{align}
Since $\vert f_\tau \vert 1_{\{  \vert f_\tau \vert \leq \epsilon\}} \to 0$ a.e. as $\epsilon \to 0$ and $f_\tau \in L^1([0,T]\times \mathbb{T}^d\times \mathbb{R}^d)$ thanks to \eqref{esti-L1-2}, dominated convergence theorem (see \textit{e.g.} \cite[Theorem 4.2]{Brezis})  ensures 
$$\lim_{\epsilon\to 0}\int_0^t \int_{\mathbb{T}^d}\int_{\mathbb{R}^d} \vert f_\tau(s) \vert 1_{\{  \vert f_\tau \vert \leq \epsilon\}} drdxds=0.$$
Therefore,  after passing to the limit as $\delta\to 0 $ then $\epsilon\to 0$ in \eqref{est-L1-1}, we obtain \eqref{est-L1-key}.
\subsubsection*{The limit as $\tau \to 0$}
 We recall that Riesz–Markov–Kakutani representation theorem ensures  $\mathcal{M}(\mathbb{T}^d\times \mathbb{R}^d)\simeq (C_c(\mathbb{T}^d\times \mathbb{R}^d))^\prime,$ see \cite[Theorem 2.14]{Rudin}.  Since $L^1(\mathbb{T}^d\times \mathbb{R}^d) \hookrightarrow  \mathcal{M}(\mathbb{T}^d\times \mathbb{R}^d)$. By using  \eqref{est-L1-key} and Banach–Alaoglu–Bourbaki theorem  (see \textit{e.g.} \cite[Theorem 3.16]{Brezis}), we can extract   a subsequence of $(f_\tau)_\tau$ (denoted by $(f_{\tau_k })_{\tau_k }$)  and $\nu\in L^\infty(0,T;\mathcal{M}(\mathbb{T}^d\times \mathbb{R}^d))$ such that 
\begin{align}\label{cv-Radon1}
      f_{\tau_k } \stackrel{\ast}{\rightharpoonup} \nu  \text{ in } L^\infty(0,T;\mathcal{M}(\mathbb{T}^d\times \mathbb{R}^d)) \text{ as } k \to 0.
\end{align}
By using \eqref{equ-final-I},  we have  
\begin{align}&\langle\dfrac{d f_\tau(s)}{ds},\phi\rangle= \int_{\mathbb{T}^d}\int_{\mathbb{R}^d} G(f_\tau(s),\phi) drdx:=\int_{\mathbb{T}^d}\int_{\mathbb{R}^d}f_\tau(s)\left(u_L(s)\cdot \nabla_x \phi+ (\nabla u_L(s)r) \cdot\nabla_r\phi\right) dr dx\notag\\	&-	\dfrac{1}{\zeta\alpha\tau}\int_{\mathbb{T}^d}\int_{\mathbb{R}^d} \alpha f_\tau(s)r\cdot\nabla_r\phi+\nabla_rf_\tau(s) \cdot\nabla_r\phi  +\dfrac{1}{2}A_b(r) \nabla_rf_\tau(s)\cdot\nabla_r\phi dr dx, \forall \phi \in \mathcal{Y}. \notag\end{align} 
Let $\xi \in C^\infty_c(]0,T[)$, multiplying the last equality by $\xi$ and integrating by parts with respect to $s$, we obtain
\begin{align*}
  - \int_0^T \int_{\mathbb{T}^d}\int_{\mathbb{R}^d} f_\tau(s)\phi \partial_s\xi ds drdx=\int_0^T \int_{\mathbb{T}^d}\int_{\mathbb{R}^d} G(f_\tau(s),\phi)\xi drdxds.
\end{align*}
Now, set 
$\phi=\vartheta\otimes\varphi\in C^\infty(\mathbb{T}^d)\otimes C_c^\infty(\mathbb{R}^d)$ and  integrate by parts with respect to $r$ in the right hand side, we infer
\begin{align}&-\zeta\alpha\tau_k \int_0^T\hspace{-0.1cm}\int_{\mathbb{T}^d}\hspace{-0.1cm}\int_{\mathbb{R}^d}\hspace{-0.1cm}f_{\tau_k }(s)\partial_t\xi\otimes\vartheta\otimes\varphi +f_{\tau_k }(s)\left(u_L(s)\cdot  \xi\otimes \nabla_x\vartheta\otimes\varphi+ (\nabla u_L(s)r) \cdot\xi\otimes\vartheta\otimes\nabla_r\varphi\right) dr dxds\notag\\	&=\int_0^T\hspace{-0.1cm}\int_{\mathbb{T}^d}\hspace{-0.1cm}\int_{\mathbb{R}^d}\hspace{-0.2cm} -	\alpha f_{\tau_k }(s)r\cdot\xi\otimes\vartheta\otimes\nabla_r\varphi+f_{\tau_k }(s) \xi\otimes\vartheta\otimes\Delta_r\varphi  +\dfrac{1}{2} f_{\tau_k }(s)\Div_r(A_b(r)\xi\otimes\vartheta\otimes\nabla_r\varphi) dr dxds\notag\\
&=\int_0^T\langle f_{\tau_k }(s),  -\alpha r\cdot\xi\otimes\vartheta\otimes\nabla_r\varphi+ \xi\otimes\vartheta\otimes\Delta_r\varphi  +\dfrac{1}{2} \Div_r(A_b(r)\xi\otimes\vartheta\otimes\nabla_r\varphi)\rangle_{\mathcal{M},C_c} ds.\notag\end{align}
By using \eqref{cv-Radon1} and \eqref{est-L1-key}, we deduce as $\tau\to 0$
\begin{align}\label{passge-limit}
    \int_0^T\langle \nu(s),  -\alpha r\cdot\xi\otimes\vartheta\otimes\nabla_r\varphi+ \xi\otimes\vartheta\otimes\Delta_r\varphi  +\dfrac{1}{2} \Div_r(A_b(r)\xi\otimes\vartheta\otimes\nabla_r\varphi)\rangle_{\mathcal{M},C_c} ds=0.
\end{align}
In particular, $\nu$ is weak solution to the following  equation
\begin{align}\label{radon-2}
    \Div_r\big(\alpha r\nu+\nabla_r \nu+\dfrac{1}{2}A_b(r)\nabla_r \nu\big)=0 \text{ in } \mathcal{D}^\prime.
\end{align}
\end{proof}

\section{Discussion and outlook}\label{outlook}
\subsection{Summary}
We study the distribution of polymers in turbulent flow by  using  stochastic modeling of the  small-scales and the Fokker-Planck  equation  in the Hookean case.
We rigorously prove the emergence of the heavy-tail distribution of polymer elongation.
More precisely, the fluid velocity is decomposed into a smooth large-scale component and a rapidly fluctuating small-scale turbulent field. We send both the spatial and temporal scales of these turbulent fluctuations to zero, by using scaling and singular limits techniques. 
This allows to fill a gap between formal theory,   numerical simulations in polymer-turbulence literature and rigorous mathematical analysis. We distinguishes  two main regimes for  polymers advected  and  stretched by small-scale turbulence. Namely, we have
\begin{itemize}
\item \textit{The probability density  regime (\(\alpha > d/2\))}: The limit Fokker--Planck density factorizes as 
\( \rho(t,x)\otimes (1 + |r|^2/2)^{-\alpha} \), which is a power-law distribution in the end-to-end vector \(r\). $\alpha$  becomes smaller   in stronger turbulent flow and encodes the competition between polymer relaxation  and turbulent strain fluctuations. This corresponds to a well-defined probability density, representing  coil and moderate stretching states. 
Physically, polymer extension statistics remain bounded and close to equilibrium as  \(\alpha > d+\tfrac{1}{2}\), and polymers are in "\textit{coil-state}".
\\
\item \textit{The Radon measure  regime and coil-stretch transition (\(0 < \alpha \le d/2\))}: $\alpha=\frac{d}{2}$  is the \textit{coil-stretch transition} threshold and the limit object should be understood a Radon measure $h_{x}(t)\otimes(1 + |r|^2/2)^{-\alpha} dr$ exhibiting power-law behavior in the end-to-end vector.  This reflects the \textit{extreme stretching} of polymers induced by turbulence and  the possibility of polymers reaching infinite length, which indicates that  small-scale turbulence dominates. This corresponds to the loss of $L^1$-compactness from a mathematical point of view.
\end{itemize}

\subsection{FENE model}\label{discussion-FENE}		It is worth  making some comment on the \textit{Finitely Extensible Nonlinear Elastic (FENE)} model.  While the Hookean model assumes linear elasticity and infinite extensibility,
the FENE model introduces nonlinearity and a finite maximum extension. Namely, in the FENE model  we replace $\frac{1}{\beta}R_t$ in  \eqref{Intro1} by 
		$\frac{1}{\beta}\frac{R_t}{1-\vert R_t\vert^2/b},$
		where 
         $\sqrt{b}$ is    the upper bound of the length extension of the polymer chains \textit{i.e.} the   chains  cannot stretch beyond the length          $\sqrt{b}$. 
      From mathematical point of view, the analysis must be considered in a bounded domain with respect to $r$-variable with the appropriate boundary conditions. The rigorous justification of the  scaling and singular limits in the case of FENE model is the subject of the  ongoing work \cite{FENE-YAS-FED}. 
      However, for the convenience of the reader, let us write what do we expect for the limit density. 
In the case of FENE model, we expect to get on the right hand side of \eqref{sys-I} (after the  scaling limit)
\begin{align*}
    \dfrac{1}{\tau}\Div_r\Big(\dfrac{br}{b-\vert r\vert^2}f_\tau+\nabla_rf_\tau+\dfrac{C_d}{2} A_b(r)\nabla_r f_\tau\Big), \quad \vert r\vert^2 <b.
\end{align*}
Concerning the singular limit  \textit{i.e.} as $\tau \to 0$, we expect  $f_\tau \to \mathfrak{f}$ where $$\Div_r\Big(\dfrac{br}{b-\vert r\vert^2}\mathfrak{f}+\nabla_r\mathfrak{f}+\dfrac{C_d}{2} A_b(r)\nabla_r \mathfrak{f}\Big)=0, \quad \vert r\vert^2 <b,$$
with the interpretation that $C_d$ becomes larger as the turbulence becomes stronger. Therefore, let use  look for radial solution  to the following equation
\begin{align*}
    \dfrac{br}{b-\vert r\vert^2}\mathfrak{f}+	\nabla_r \mathfrak{f}+\dfrac{C_d}{2} A_b(r)\nabla_r \mathfrak{f}=0.
\end{align*}
Set $t=\vert r\vert$, we get the following ODE in $t:$
$\dfrac{b}{C_d}\dfrac{1}{b-t^2}\mathfrak{f}+	(\dfrac{2+C_d t^2}{2C_d t})\mathfrak{f}^\prime=0. $
By solving the  ODE, we get
\begin{align}\label{formula-stat}
    \mathfrak{f}( r)=\dfrac{1}{Z}\big(\frac{b-\vert r\vert^2}{b}\big)^{\gamma} \big(\frac{2+C_d\vert r\vert^2}{2}\big)^{-\gamma}, \gamma= \frac{b}{2+C_d b}>0 \text{ and } Z \text{ is a renormalized constant.}
\end{align}
Note that the last formula \eqref{formula-stat} exhibits power-law behavior, with  the term reflecting the fact that the domain is bounded. It is also similar to the stationary probability density  derived  in \cite{afonso2005nonlinear}.
Now, let us make some remarks on the FENE model in comparison with the Hookean one. First, note that $\mathfrak{f}$ is always a probability density function, unlike the Hookean case. Furthermore, the stretching  saturates at $\sqrt{b}$ whereas in the Hookean model, we could obtain an infinite length. Finally,
for intermediate extensions, $R_{min}<< \vert r\vert << \sqrt{b}$ , we have
\begin{align*}
    \mathfrak{f} \sim \vert r\vert^{d-1}(1+C_d\frac{\vert r\vert^2}{2})^{\frac{-b}{2+C_d b}} \sim  \vert r\vert^{-1+d-\frac{2b}{2+C_d b}}.
\end{align*}
In accordance with  \cite{Balk},  the coil-stretch holds  at $(2+C_d b)d=2b$. A more in-depth discussion of this transition and its link  to the Hookean case is part of the ongoing work \cite{FENE-YAS-FED}, see also \cite{PicardoLanceVinc} for some recent numerical simulation and discussions.

   \appendix 
\section{ Proof of \autoref{lemma-appendix}}\label{Appendix}
\begin{proof}[Proof of \autoref{lemma-appendix}]
 Let   us    pass    to  the limit  as $\delta \to 0$ then $M\to +\infty$   in \eqref{est-M-delta}.
   \begin{itemize}
       \item Recall that $f\in C_w([0,T],H)$, by using the properties of convolution and the lower semi-continuity of weak convergence, we obtain
       \begin{align*}
          \liminf_{M\to +\infty} \int_{\mathbb{T}^d}\int_{\mathbb{R}^d}f_\tau(t)^2 T_M drdx \leq \liminf_{M\to +\infty}\liminf_{\delta\to 0} \int_{\mathbb{T}^d}\int_{\mathbb{R}^d}[f_\tau(t)]_\delta^2 T_M drdx,
       \end{align*}
       by Fatou's lemma \yas{(see \textit{e.g.} \cite[Lemma 4.2]{Brezis})}, we deduce 
        \begin{align*}
      \int_{\mathbb{T}^d}\int_{\mathbb{R}^d}f_\tau(t)^2(1+\frac{\vert r\vert^2}{2})^\alpha  drdx\leq     \liminf_{M\to +\infty} \int_{\mathbb{T}^d}\int_{\mathbb{R}^d}f_\tau(t)^2 T_M drdx.
       \end{align*}
       \item Since $f_0^\tau\in H_\alpha$, we have $\displaystyle\lim_{\delta\to 0}\int_{\mathbb{T}^d}\int_{\mathbb{R}^d}[f_0^\tau]_\delta^2 T_M drdx=\int_{\mathbb{T}^d}\int_{\mathbb{R}^d}(f_0^\tau)^2 T_M drdx$ and  monotone convergence theorem ensures
       \begin{align*}
         \lim_{M\to +\infty}  \int_{\mathbb{T}^d}\int_{\mathbb{R}^d}(f_0^\tau)^2 T_M drdx=\int_{\mathbb{T}^d}\int_{\mathbb{R}^d}(f_0^\tau)^2(1+\frac{\vert r\vert^2}{2})^\alpha  drdx \leq \mathbf{\Lambda}.
       \end{align*}
       \item $\displaystyle\lim_{\delta\to 0}\int_0^t\int_{\mathbb{T}^d}\int_{\mathbb{R}^d}[u_L(s)\cdot \nabla_xf_\tau(s)]_\delta  T_M[f_\tau(s)]_\delta drdxds=0.$ Indeed, it holds by using commutators estimate, see \autoref{Lemma-tech-1} (see also \textit{e.g.}   \cite[Proof of (55)]{FlaTah24} for a similar arguments).\\
       
\item  The term $\displaystyle\int_0^t\int_{\mathbb{T}^d}\int_{\mathbb{R}^d} [(\nabla u_L(s)r) \cdot\nabla_rf_\tau(s)]_\delta T_M[f_\tau(s)]_\delta dr dxds$. By using  \eqref{regularity-f} and the properties of convolution one gets 
\begin{align*}
  &\lim_{\delta\to 0}  \int_0^t\int_{\mathbb{T}^d}\int_{\mathbb{R}^d} [(\nabla u_L(s)r) \cdot\nabla_rf_\tau(s)]_\delta T_M[f_\tau(s)]_\delta dr dxds\\&=\int_0^t\int_{\mathbb{T}^d}\int_{\mathbb{R}^d} (\nabla u_L(s)r) \cdot\nabla_rf_\tau(s) T_Mf_\tau(s) dr dxds.
\end{align*}
Now, by using that $\vert T_M^\prime\vert \leq 1$ we get
\begin{align*}
    \vert\int_0^t\int_{\mathbb{T}^d}\int_{\mathbb{R}^d}& (\nabla u_L(s)r) \cdot\nabla_rf_\tau(s) T_Mf_\tau(s) dr dxds\vert\\&\leq \alpha\vert\int_0^t\int_{\mathbb{T}^d}\int_{\mathbb{R}^d} (\nabla u_L(s)r) \cdot r f_\tau^2(s) (1+\frac{\vert r\vert^2}{2})^{\alpha-1} T_M^\prime dr dxds\vert\\
    &\leq 2\alpha\Vert \nabla_xu_L \Vert_\infty   \int_0^t\int_{\mathbb{T}^d}\int_{\mathbb{R}^d} \dfrac{\vert  r\vert^2}{2} f_\tau^2(s) (1+\frac{\vert r\vert^2}{2})^{\alpha-1}  dr dxds \\
    &\leq 2\alpha\Vert \nabla_xu_L \Vert_\infty   \int_0^t\int_{\mathbb{T}^d}\int_{\mathbb{R}^d}  f_\tau^2(s) (1+\frac{\vert r\vert^2}{2})^{\alpha}  dr dxds.
\end{align*}
 \item Concerning  the term $\displaystyle\dfrac{1}{\zeta\alpha\tau}\int_0^t\int_{\mathbb{T}^d}\int_{\mathbb{R}^d} \alpha [\Div_r( f_\tau(s)r)]_\delta T_M[f_\tau(s)]_\delta dr dxds$, we have
 \begin{align*}
      [\Div_r( f_\tau(s)r)]_\delta T_M[f_\tau(s)]_\delta=d[f_\tau(s)]_\delta^2T_M+r\cdot\nabla_r[f_\tau(s)]_\delta[f_\tau(s)]_\delta T_M,
 \end{align*}
 by using    \eqref{regularity-f} and the properties of convolution, it is possible to  let $\delta\to 0$ and obtain 
 \begin{align*}
    \lim_{\delta\to 0}& \int_0^t\int_{\mathbb{T}^d}\int_{\mathbb{R}^d} [\Div_r( f_\tau(s)r)]_\delta T_M[f_\tau(s)]_\delta dr dxds\\=d&\int_0^t\int_{\mathbb{T}^d}\int_{\mathbb{R}^d} f_\tau^2(s)T_M+r\cdot\nabla_rf_\tau(s)f_\tau(s) T_Mdr dxds=\int_0^t\int_{\mathbb{T}^d}\int_{\mathbb{R}^d} \Div_r( f_\tau(s)r) T_Mf_\tau(s) dr dxds\\
   =&\dfrac{d}{2}\int_0^t\int_{\mathbb{T}^d}\int_{\mathbb{R}^d} f_\tau^2(s)T_M drdxds-\dfrac{1}{2}\int_0^t\int_{\mathbb{T}^d}\int_{\mathbb{R}^d} f_\tau^2(s)r\cdot\nabla_rT_M drdxds\\
   =&\dfrac{d}{2}\int_0^t\int_{\mathbb{T}^d}\int_{\mathbb{R}^d} f_\tau^2(s)T_M drdxds-\dfrac{\alpha}{2}\int_0^t\int_{\mathbb{T}^d}\int_{\mathbb{R}^d} f_\tau^2(s)\vert r\vert^2 (1+\frac{\vert r\vert^2}{2})^{\alpha-1}T_M^\prime drdxds\\
   \leq & \dfrac{d}{2}\int_0^t\int_{\mathbb{T}^d}\int_{\mathbb{R}^d} f_\tau^2(s)(1+\frac{\vert r\vert^2}{2})^{\alpha} drdxds.
 \end{align*}
    \item The term $\displaystyle\dfrac{1}{\zeta\alpha\tau}\int_0^t\langle[\Delta_rf_\tau(s) ]_\delta, T_M[f_\tau(s)]_\delta\rangle ds.$ Note that 
    \begin{align*}
    \int_0^t\langle[\Delta_rf_\tau(s) ]_\delta, T_M[f_\tau(s)]_\delta\rangle ds= -\int_0^t\int_{\mathbb{T}^d}\int_{\mathbb{R}^d}   \vert [\nabla_r f_\tau(s)]_\delta\vert^2 T_M+[\nabla_r f_\tau(s)]_\delta\cdot \nabla_rT_M[f_\tau(s)]_\delta drdxds,
    \end{align*}
  by using \eqref{regularity-f} we can pass to the limit as $\delta\to 0$ and get
  \begin{align*}
  \lim_{\delta\to 0}  &\int_0^t\langle[\Delta_rf_\tau(s) ]_\delta, T_M[f_\tau(s)]_\delta\rangle ds= -\int_0^t\int_{\mathbb{T}^d}\int_{\mathbb{R}^d}   \vert \nabla_r f_\tau(s)\vert^2 T_M+\nabla_r f(s)\cdot \nabla_rT_Mf_\tau(s) drdxds\\
  &=-\int_0^t\int_{\mathbb{T}^d}\int_{\mathbb{R}^d}   \vert \nabla_r f_\tau(s)\vert^2 T_M+\alpha\nabla_r f_\tau(s)\cdot r (1+\frac{\vert r\vert^2}{2})^{\alpha-1}T_M^\prime f_\tau(s) drdxds.
    \end{align*}
    By using Young inequality we get
    \begin{align*}
     \vert \int_0^t\int_{\mathbb{T}^d}\int_{\mathbb{R}^d}   \alpha\nabla_r f_\tau(s)\cdot r (1+\frac{\vert r\vert^2}{2})^{\alpha-1}T_M^\prime f_\tau(s) drdxds\vert \leq   \alpha^2 \int_0^t\int_{\mathbb{T}^d}\int_{\mathbb{R}^d}f_\tau^2(s)(1+\frac{\vert r\vert^2}{2})^{\alpha} drdxds\\+\dfrac{1}{2}\int_0^t\int_{\mathbb{T}^d}\int_{\mathbb{R}^d} \vert \nabla_rf_\tau(s)\vert^2(1+\frac{\vert r\vert^2}{2})^{\alpha} drdxds.
    \end{align*}
    Thus, we obtain
    \begin{align*}
  \limsup_{M\to +\infty}\lim_{\delta\to 0}  \int_0^t\langle[\Delta_rf_\tau(s) ]_\delta, T_M[f_\tau(s)]_\delta\rangle ds& \leq  -\liminf_{M\to +\infty}\int_0^t\int_{\mathbb{T}^d}\int_{\mathbb{R}^d}   \vert \nabla_r f_\tau(s)\vert^2 T_M drdxds\\&+\dfrac{1}{2}\int_0^t\int_{\mathbb{T}^d}\int_{\mathbb{R}^d} \vert \nabla_rf_\tau(s)\vert^2(1+\frac{\vert r\vert^2}{2})^{\alpha} drdxds\\
  &+\alpha^2 \int_0^t\int_{\mathbb{T}^d}\int_{\mathbb{R}^d}f_\tau^2(s)(1+\frac{\vert r\vert^2}{2})^{\alpha} drdxds.
    \end{align*}
    \item  The term $\displaystyle\dfrac{1}{\zeta\alpha\tau}\int_0^t\langle\dfrac{1}{2}[\Div_r(A_b(r) \nabla_rf_\tau(s))]_\delta, T_M[f_\tau(s)]_\delta \rangle ds.$ We have 
    \begin{align*}
        &\int_0^t\langle[\Div_r(A_b(r) \nabla_rf_\tau(s))]_\delta, T_M[f_\tau(s)]_\delta \rangle ds\\&=-\int_0^t\int_{\mathbb{T}^d}\int_{\mathbb{R}^d} \Big((A_b(r) \nabla_r[f_\tau(s)]_\delta)\cdot  \nabla_r[f_\tau(s)]_\delta T_M\\&\qquad\qquad\qquad+\alpha(A_b(r) \nabla_r[f_\tau(s)]_\delta)\cdot  r (1+\frac{\vert r\vert^2}{2})^{\alpha-1}T_M^\prime [f_\tau(s)]_\delta \Big)drdxds.
    \end{align*}
    By using \eqref{regularity-f}, we can let $\delta \to 0$ in the last equality and obtain 
    \begin{align*}
  &    \lim_{\delta\to 0}  -\int_0^t\langle[\Div_r(A_b(r) \nabla_rf_\tau(s))]_\delta, T_M[f_\tau(s)]_\delta \rangle ds\\&=\int_0^t\int_{\mathbb{T}^d}\int_{\mathbb{R}^d} (A_b(r) \nabla_rf_\tau(s))\cdot  \nabla_rf_\tau(s) T_M+\alpha(A_b(r) \nabla_r f_\tau(s))\cdot  r (1+\frac{\vert r\vert^2}{2})^{\alpha-1}T_M^\prime f_\tau(s) drdxds\\
      &=\int_0^t\int_{\mathbb{T}^d}\int_{\mathbb{R}^d} (b+1)\vert r\vert^2\vert \nabla_r f_\tau(s)\vert^2T_M-b\vert r\cdot \nabla_rf_\tau(s)\vert^2 T_Mdrdxds\\&\quad+\alpha\int_0^t\int_{\mathbb{T}^d}\int_{\mathbb{R}^d}(A_b(r) \nabla_r f_\tau(s))\cdot  r (1+\frac{\vert r\vert^2}{2})^{\alpha-1}T_M^\prime f_\tau(s) drdxds\\
      &\geq \int_0^t\int_{\mathbb{T}^d}\int_{\mathbb{R}^d} \vert r\vert^2\vert \nabla_r f_\tau(s)\vert^2T_Mdrdxds\\&\quad+\alpha\int_0^t\int_{\mathbb{T}^d}\int_{\mathbb{R}^d}(A_b(r) \nabla_r f_\tau(s))\cdot  r (1+\frac{\vert r\vert^2}{2})^{\alpha-1}T_M^\prime f_\tau(s) drdxds.
    \end{align*}
On the other hand, we have
\begin{align*}
    &\vert (A_b(r) \nabla_r f_\tau(s))\cdot  r (1+\frac{\vert r\vert^2}{2})^{\alpha-1}T_M^\prime f_\tau(s)\vert\\&\leq 3\vert\vert r\vert^2 \nabla_r f_\tau(s))\cdot  rT_M^\prime f_\tau(s)(1+\frac{\vert r\vert^2}{2})^{\alpha-1}\vert\leq 6\vert \nabla_rf_\tau(s)\vert \vert r\vert f(s)(1+\frac{\vert r\vert^2}{2})^{\alpha}\\
    &\leq \dfrac{1}{2\alpha}\vert \nabla_rf_\tau(s)\vert^2 \vert r\vert^2 (1+\frac{\vert r\vert^2}{2})^{\alpha}+ 18\alpha f_\tau^2(s)(1+\frac{\vert r\vert^2}{2})^{\alpha},
\end{align*}
therefore, we get
  \begin{align*}
      \lim_{\delta\to 0}  -\int_0^t&\langle[\Div_r(A_b(r) \nabla_rf_\tau(s))]_\delta, T_M[f_\tau(s)]_\delta \rangle ds\\
      &\geq \int_0^t\int_{\mathbb{T}^d}\int_{\mathbb{R}^d} \vert r\vert^2\vert \nabla_r f_\tau(s)\vert^2T_Mdrdxds\\&-\int_0^t\int_{\mathbb{T}^d}\int_{\mathbb{R}^d}\dfrac{1}{2}\vert \nabla_rf_\tau(s)\vert^2 \vert r\vert^2 (1+\frac{\vert r\vert^2}{2})^{\alpha}+ 18\alpha^2 f_\tau^2(s)(1+\frac{\vert r\vert^2}{2})^{\alpha}
   drdxds.  \end{align*}
   Hence by using monotone convergence theorem, we get
   \begin{align*}
 &   \liminf_{M\to +\infty}  \lim_{\delta\to 0}  -\int_0^t\langle[\Div_r(A_b(r) \nabla_rf_\tau(s))]_\delta, T_M[f_\tau(s)]_\delta \rangle ds\\
      &\geq \int_0^t\int_{\mathbb{T}^d}\int_{\mathbb{R}^d} \dfrac{\vert r\vert^2}{2} \vert \nabla_r f_\tau(s)\vert^2(1+\frac{\vert r\vert^2}{2})^{\alpha}drdxds-18\alpha^2\int_0^t\int_{\mathbb{T}^d}\int_{\mathbb{R}^d}  f_\tau^2(s)(1+\frac{\vert r\vert^2}{2})^{\alpha}
   drdxds.  \end{align*}
\end{itemize}
By gathering the previous estimates we get
\begin{align*}
    &\int_{\mathbb{T}^d}\int_{\mathbb{R}^d}f_\tau(t)^2(1+\frac{\vert r\vert^2}{2})^\alpha  drdx+\dfrac{1}{2\zeta\tau\alpha}\int_0^t\int_{\mathbb{T}^d}\int_{\mathbb{R}^d}  \vert \nabla_r f_\tau(s)\vert^2(1+\frac{\vert r\vert^2}{2})^{\alpha+1}drdxds\\
&\leq  \int_{\mathbb{T}^d}\int_{\mathbb{R}^d}(f^\tau_0)^2(1+\frac{\vert r\vert^2}{2})^\alpha  drdx+(\dfrac{d/2+10\alpha}{\zeta\tau}+2\alpha\Vert \nabla_xu_L \Vert_\infty )\int_0^t\int_{\mathbb{T}^d}\int_{\mathbb{R}^d} f_\tau^2(s)(1+\frac{\vert r\vert^2}{2})^{\alpha} drdxds.
\end{align*}
Gronwall's inequality ensures the existence of $\mathbf{C}=\dfrac{d/2+10\alpha}{\zeta\tau}+2\alpha\Vert \nabla_xu_L \Vert_\infty$ such that 
\begin{align}
   \sup_{t\in [0, T]} \int_{\mathbb{T}^d}\int_{\mathbb{R}^d}f_\tau(t)^2(1+\frac{\vert r\vert^2}{2})^\alpha  drdx&+\dfrac{1}{2\zeta\tau\alpha}\int_0^T\int_{\mathbb{T}^d}\int_{\mathbb{R}^d}  \vert \nabla_r f_\tau(s)\vert^2(1+\frac{\vert r\vert^2}{2})^{\alpha+1}drdxds\notag\\
&\leq  e^{\mathbf{C}T}\int_{\mathbb{T}^d}\int_{\mathbb{R}^d}(f_0^\tau)^2(1+\frac{\vert r\vert^2}{2})^\alpha  drdx\leq  \mathbf{\Lambda}e^{\mathbf{C}T}.\end{align}
\end{proof}

\section{Weighted Poincar\'e inequality}\label{Appendix-B}
Let  $\vartheta_\sigma$ be  a family of the generalized Cauchy distributions on $\mathbb{R}^d,$ which have densities
\begin{align*}
    \dfrac{d\vartheta_\sigma(x)}{dx}=\dfrac{1}{A}(1+\vert x\vert^2)^{-\sigma}; \quad \sigma>\dfrac{d}{2} \text{ and a normalizing constant } A. 
\end{align*}
Recall that $
    \text{Var}_{\vartheta_\sigma}(g)= \int_{\mathbb{R}^d} g^2 d\vartheta_\sigma-(\int_{\mathbb{R}^d} g d\vartheta_\sigma)^2.$
We have the following result.
\begin{theorem}\label{thm1-appendixB}(\cite[Theorem 3.1]{bobkov09weighted}) The generalized Cauchy distribution $\vartheta_\sigma$ with $\sigma \geq d $ satisfies
the weighted Poincaré-type inequality
\begin{align}\label{ineq-B-1}
    \text{Var}_{\vartheta_\sigma}(g) \leq \mathbf{C}_\sigma\int_{\mathbb{R}^d} \vert \nabla g(x)\vert^2(1+\vert x\vert^2) d\vartheta_\sigma(x)
\end{align}
    for all bounded smooth functions $g$ on $\mathbb{R}^d$ with $\mathbf{C}_\sigma=\frac{1}{2(\sigma -1)} (\sqrt{1+\frac{2}{\sigma-1}}+\sqrt{\frac{2}{\sigma-1}})^2.$
\end{theorem}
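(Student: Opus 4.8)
This statement is the Bobkov--Ledoux weighted Poincar\'e inequality, so the most economical route is simply to invoke \cite[Theorem 3.1]{bobkov09weighted}; I sketch below how I would reconstruct it. The plan is to exploit the radial symmetry of $\vartheta_\sigma$. Passing to polar coordinates $x=t\omega$ with $t=\vert x\vert\ge 0$ and $\omega\in S^{d-1}$, the density being radial means $\vartheta_\sigma$ disintegrates as a \emph{product} $\vartheta_\sigma=\rho_\sigma\otimes\tau_{d-1}$, where $\tau_{d-1}$ is the uniform probability on $S^{d-1}$ and $d\rho_\sigma(t)\propto(1+t^2)^{-\sigma}t^{d-1}\,dt$ on $(0,\infty)$ is the radial marginal. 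The law of total variance for a product measure then gives
\[
\mathrm{Var}_{\vartheta_\sigma}(g)=\int_0^\infty \mathrm{Var}_{\tau_{d-1}}\big(g(t,\cdot)\big)\,d\rho_\sigma(t)+\mathrm{Var}_{\rho_\sigma}(\bar g),\qquad \bar g(t):=\int_{S^{d-1}}g(t,\omega)\,d\tau_{d-1}(\omega),
\]
and I would bound the two terms separately.

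For the angular term I would use the sharp spherical Poincar\'e inequality $\mathrm{Var}_{\tau_{d-1}}(h)\le\tfrac{1}{d-1}\int_{S^{d-1}}\vert\nabla_{S^{d-1}}h\vert^2\,d\tau_{d-1}$ (the first nonzero eigenvalue of $-\Delta_{S^{d-1}}$ equals $d-1$). Since $\nabla g=(\partial_t g)\,\widehat x+t^{-1}\nabla_{S^{d-1}}g$, one has $\vert\nabla_{S^{d-1}}g(t,\cdot)\vert^2=t^2\big(\vert\nabla g\vert^2-(\partial_t g)^2\big)\le\vert x\vert^2\vert\nabla g\vert^2\le(1+\vert x\vert^2)\vert\nabla g\vert^2$, so this term is at once controlled by $\tfrac{1}{d-1}\int_{\mathbb{R}^d}\vert\nabla g\vert^2(1+\vert x\vert^2)\,d\vartheta_\sigma$. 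The factor $(1+\vert x\vert^2)$ falls out here for free, which is the structural reason a \emph{weighted} inequality is the natural object.

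There remains a one-dimensional weighted Poincar\'e inequality $\mathrm{Var}_{\rho_\sigma}(\bar g)\le C\int_0^\infty\bar g'(t)^2(1+t^2)\,d\rho_\sigma(t)$. I would attack it with the substitution $t=\tan\theta$, which turns $\rho_\sigma$ into a Jacobi/Beta-type measure $d\mu(\theta)\propto\sin^{d-1}\theta\,\cos^{2\sigma-d-1}\theta\,d\theta$ on $(0,\pi/2)$ and turns the right-hand side into a constant times $\int_0^{\pi/2}h'(\theta)^2\sin^{d-1}\theta\,\cos^{2\sigma-d+1}\theta\,d\theta$ with $h=\bar g\circ\tan$; this is a one-dimensional Muckenhoupt/Hardy-type inequality on a bounded interval, whose validity reduces to checking the finiteness of $\sup_s\big(\int_s^{\pi/2}d\mu\big)\big(\int_0^s(\text{weight density})^{-1}d\theta\big)$, an elementary estimate (with the usual care at $\theta=0$ for the degenerate radial weight $\sin^{d-1}\theta$, which forces $\bar g$ to be treated as a radial function smooth at the origin). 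Tracking constants through this step and optimizing the split point is what yields the explicit value of $\mathbf{C}_\sigma$, and the hypothesis $\sigma\ge d$ is exactly what keeps all the relevant tail integrals of $(1+t^2)^{-\sigma}t^{d-1}$ convergent.

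The main obstacle --- the reason one cannot merely quote a classical inequality --- is that $\vartheta_\sigma$ is \emph{not} log-concave: the potential $V(x)=\sigma\log(1+\vert x\vert^2)$ has $\nabla^2V$ indefinite for $\vert x\vert>1$, so Brascamp--Lieb does not apply and the weight in the conclusion cannot be $\langle(\nabla^2V)^{-1}\nabla g,\nabla g\rangle$. Bobkov and Ledoux instead route the proof through the theory of $\kappa$-concave (convex) measures, which is the clean path to the sharp constant $\mathbf{C}_\sigma=\frac{1}{2(\sigma-1)}\big(\sqrt{1+\tfrac{2}{\sigma-1}}+\sqrt{\tfrac{2}{\sigma-1}}\big)^2$; the elementary polar-coordinate argument above delivers a weighted Poincar\'e inequality of the same order but, without the sharper one-dimensional extremal analysis, not this precise value. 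For the present paper only the \emph{existence} of a finite $\mathbf{C}_\alpha$ in \eqref{ineq-B-2} is used, so either the reference or the sketch above suffices.
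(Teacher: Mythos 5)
The paper offers no proof of this statement at all---it is quoted verbatim from Bobkov--Ledoux \cite[Theorem 3.1]{bobkov09weighted}---so your primary route of simply invoking that reference coincides exactly with what the paper does. Your supplementary polar-coordinate sketch (total-variance split, spherical spectral gap, one-dimensional Muckenhoupt criterion for the radial marginal) is a sound heuristic for obtaining \emph{some} finite constant, but, as you yourself concede, it does not recover the explicit $\mathbf{C}_\sigma$, and note that the hypothesis $\sigma\ge d$ is tied to that dimension-free constant rather than to convergence of the tail integrals, which only require $\sigma>\frac{d}{2}$ (the range $\frac{d}{2}<\sigma\le d$ being covered in the paper by \cite{bonnefont2016spectral} with a different constant).
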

The authors in \cite{bobkov09weighted} state  that  \autoref{thm1-appendixB} can involve the values $\frac{d}{2}<\sigma \leq d$ with constant depending  on the dimension $d$  (see \cite[page 415]{bobkov09weighted}). However,   general results on Riemannian manifold  \cite{bonnefont2016spectral}, based on the   spectral gap estimate, guarantee weighted Poincaré inequality with  explicit constants. Let us recall them (see \cite{bonnefont2016spectral}).
 Following \cite[Corollary 5.2 ($d= 3$) and Corollary 5.3  ($d=2$)]{bonnefont2016spectral}, the constant $C_\sigma$ in  \eqref{ineq-B-1} is given by
\begin{align}\label{constant-B-2}
    \mathbf{C}_\sigma=\dfrac{1}{(\sigma-\frac{d}{2})^2} \text{ if } \frac{d}{2}<\sigma \leq d;  \quad d=2,3.
\end{align}
On the other hand,
we recall that Poincaré  inequality is  understood in the following sense: if the right-hand side is finite, then the function is square-integrable, and the inequality holds true.\\
Therefore, based on \cite{bobkov09weighted,bonnefont2016spectral} briefly recalled above, we present the following result, which serves in our analysis. Namely, we consider
   $\nu_\sigma$,   a family of the generalized Cauchy distributions on $\mathbb{R}^d,$ which have densities
\begin{align*}
    \dfrac{d\nu_\sigma(y)}{dy}=\dfrac{1}{Z}(1+\dfrac{\vert y\vert^2}{2})^{-\sigma}; \quad \sigma>\dfrac{d}{2} \text{ and a normalizing constant } Z. 
\end{align*}
 \begin{proposition}\label{proposition-Poincare-weight}
    Let $d\in \{2,3\}$,  $\sigma > \dfrac{d}{2}$ and   $g:\mathbb{R}^d\to \mathbb{R}$. Assume that $g$ satisfies
    \begin{align*}  \int_{\mathbb{R}^d} \vert \nabla g(y)\vert^2(1+\dfrac{\vert y\vert^2}{2}) d\nu_\sigma(y) <+\infty.
    \end{align*}
    Then,  the weighted Poincaré-type inequality holds
    \begin{align}\label{ineq-B-2}
    \text{Var}_{\nu_\sigma}(g) \leq C_\sigma\int_{\mathbb{R}^d} \vert \nabla g(y)\vert^2(1+\dfrac{\vert y\vert^2}{2}) d\nu_\sigma(y),
\end{align}
where
\begin{align*}
    C_\sigma&=\begin{cases}
        \frac{1}{4(\sigma -1)} (\sqrt{1+\frac{2}{\sigma-1}}+\sqrt{\frac{2}{\sigma-1}})^2 & \text{ if } d \leq  \sigma ,\\
  \dfrac{1}{2(\sigma-\frac{d}{2})^2} &\text{ if } \frac{d}{2}<\sigma \leq d.
    \end{cases}
    \end{align*}
\end{proposition}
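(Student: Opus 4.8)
The plan is to deduce \autoref{proposition-Poincare-weight} from the two results already quoted in this appendix (\autoref{thm1-appendixB} for $\sigma\ge d$, and the Bonnefont--Joulin spectral-gap bound giving the constant \eqref{constant-B-2} for $\tfrac d2<\sigma\le d$) by a single affine change of variables. The observation is that $\nu_\sigma$ is nothing but the push-forward of the standard generalized Cauchy law $\vartheta_\sigma$ (density $\propto(1+|x|^2)^{-\sigma}$) under the dilation $x\mapsto\sqrt{2}\,x$: if $X\sim\vartheta_\sigma$ then $Y:=\sqrt{2}\,X$ has density $\propto\big(1+\tfrac{|y|^2}{2}\big)^{-\sigma}$, and the normalizing constants are related by $Z=2^{d/2}A$. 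So it suffices to transport the weighted Poincaré inequality for $\vartheta_\sigma$ through this dilation, keeping careful track of the Jacobian and of the factor $\tfrac12$ sitting inside the weight $1+\tfrac{|y|^2}{2}$.

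Concretely, given $g:\R^d\to\R$ as in the statement, I would set $h(x):=g(\sqrt{2}\,x)$. A change of variables $y=\sqrt{2}\,x$ gives $\mathbb{E}_{\nu_\sigma}[g]=\mathbb{E}_{\vartheta_\sigma}[h]$ and $\mathbb{E}_{\nu_\sigma}[g^2]=\mathbb{E}_{\vartheta_\sigma}[h^2]$, hence $\mathrm{Var}_{\nu_\sigma}(g)=\mathrm{Var}_{\vartheta_\sigma}(h)$; moreover $\nabla h(x)=\sqrt{2}\,(\nabla g)(\sqrt{2}\,x)$, so $|\nabla h(x)|^2\big(1+|x|^2\big)=2\,|(\nabla g)(\sqrt{2}\,x)|^2\big(1+|x|^2\big)$, which under the same substitution integrates against $d\vartheta_\sigma$ to $2\int_{\R^d}|\nabla g(y)|^2\big(1+\tfrac{|y|^2}{2}\big)\,d\nu_\sigma(y)$. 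I then split into the two regimes: for $\sigma\ge d$ I apply \autoref{thm1-appendixB} to $h$ with constant $\mathbf{C}_\sigma$; for $\tfrac d2<\sigma\le d$ I apply instead the inequality with the constant \eqref{constant-B-2} from \cite{bonnefont2016spectral}. Undoing the change of variables in each case yields $\mathrm{Var}_{\nu_\sigma}(g)\le C_\sigma\int_{\R^d}|\nabla g(y)|^2\big(1+\tfrac{|y|^2}{2}\big)\,d\nu_\sigma(y)$, where $C_\sigma$ is read off from the $\vartheta_\sigma$-constant together with the factor $2$ produced above; matching this with the two displayed expressions for $C_\sigma$ in the proposition is the one piece of genuine bookkeeping.

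It remains to remove the regularity restrictions: \autoref{thm1-appendixB} (and likewise the Bonnefont--Joulin inequality) is stated only for bounded smooth functions, whereas the proposition allows any $g$ for which the weighted Dirichlet form $\int_{\R^d}|\nabla g|^2\big(1+\tfrac{|y|^2}{2}\big)\,d\nu_\sigma$ is finite. I would close this gap by a standard truncation-plus-mollification argument: approximate $g$ by $g_{\varepsilon,n}=\chi_n(g*\varrho_\varepsilon)$ with $\varrho_\varepsilon$ a mollifier and $\chi_n$ a smooth cut-off at level $n$, apply the inequality to each $g_{\varepsilon,n}$, and let $\varepsilon\to0$ and then $n\to\infty$. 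This is harmless because $\nu_\sigma$ is a probability measure, the weight $\big(1+\tfrac{|y|^2}{2}\big)^{1-\sigma}$ multiplying $|\nabla g|^2\,dy$ is bounded (since $\sigma>\tfrac d2\ge1$) and locally doubling, so that $\int|\nabla(g*\varrho_\varepsilon)|^2\big(1+\tfrac{|y|^2}{2}\big)\,d\nu_\sigma\to\int|\nabla g|^2\big(1+\tfrac{|y|^2}{2}\big)\,d\nu_\sigma$ by Jensen's inequality and dominated convergence, while $|\nabla g_{\varepsilon,n}|\le|\nabla(g*\varrho_\varepsilon)|$ a.e.\ and Fatou's lemma controls the left-hand side. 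As noted above, the same limiting argument shows a posteriori that $g\in L^2(\nu_\sigma)$ up to an additive constant, so that $\mathrm{Var}_{\nu_\sigma}(g)$ is well defined and the inequality is meaningful.

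I do not expect a conceptual obstacle here; the two points needing real care are, first, verifying that the numerical constant $C_\sigma$ comes out exactly as stated after the $\sqrt{2}$-dilation — the factor $\tfrac12$ in the weight $1+\tfrac{|y|^2}{2}$ must be propagated consistently through both the Bobkov--Ledoux and the Bonnefont--Joulin constants — and, second, making sure the approximation step is legitimate for the full class of admissible $g$, in particular that mollification is compatible with the (bounded, power-decaying) weight. Everything else reduces to the elementary change of variables described above.
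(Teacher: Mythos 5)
Your route is the paper's route: the published proof consists precisely of the dilation $x=y/\sqrt{2}$ (equivalently $h(x)=g(\sqrt{2}\,x)$), the identity $Z=2^{d/2}A$, and the appeal to \autoref{thm1-appendixB} for $\sigma\ge d$ and to the constant \eqref{constant-B-2} for $\tfrac d2<\sigma\le d$; your truncation--mollification step is only implicit in the paper (which merely remarks that the inequality is understood to hold whenever the right-hand side is finite), so that part of your write-up is, if anything, more careful.

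The one step you postpone as ``bookkeeping'' is, however, exactly where the argument and the displayed constant do not match. With $h(x)=g(\sqrt{2}\,x)$ you correctly get $\mathrm{Var}_{\nu_\sigma}(g)=\mathrm{Var}_{\vartheta_\sigma}(h)$ and $\int_{\mathbb{R}^d}|\nabla h(x)|^2(1+|x|^2)\,d\vartheta_\sigma(x)=2\int_{\mathbb{R}^d}|\nabla g(y)|^2\bigl(1+\tfrac{|y|^2}{2}\bigr)\,d\nu_\sigma(y)$, so the dilation yields \eqref{ineq-B-2} with constant $2\mathbf{C}_\sigma$ (respectively $2(\sigma-\tfrac d2)^{-2}$ in the regime $\tfrac d2<\sigma\le d$), whereas the proposition displays $C_\sigma=\tfrac12\mathbf{C}_\sigma$ (respectively $\tfrac12(\sigma-\tfrac d2)^{-2}$), i.e.\ a constant four times smaller. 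The smaller constant cannot be extracted from this scaling: since $g\mapsto h$ is a bijection, the inequality for $\nu_\sigma$ with constant $\tfrac12\mathbf{C}_\sigma$ is equivalent to \eqref{ineq-B-1} with constant $\tfrac14\mathbf{C}_\sigma$, which is strictly stronger than the quoted results. So your argument proves \autoref{proposition-Poincare-weight} with $C_\sigma=2\mathbf{C}_\sigma$, and you should either state it that way or leave the constant unspecified; for the only use made of the proposition (the estimate in \autoref{prop-cv-1}, where any finite $C_\alpha>0$ suffices) this is immaterial, and the constant displayed in the paper appears to suffer from the same slip.
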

\begin{proof}
  By using \eqref{ineq-B-1}, \eqref{constant-B-2}, using the  change of variable $x=\frac{y}{\sqrt{2}}$ and noticing that 
   \begin{align*}
       Z=\int_{\mathbb{R}^d} (1+\dfrac{\vert y\vert^2}{2})^{-\sigma} dy=2^{\frac{d}{2}}\int_{\mathbb{R}^d} (1+\vert x\vert^2)^{-\sigma} dx=2^{\frac{d}{2}}A,
   \end{align*}
   we obtain \eqref{ineq-B-2}.
\end{proof}

\section{Commutator estimate}
\begin{lemma}\label{Lemma-tech-1}
    Let $ p\in [1,+\infty[$ and  $h\in L^p(\mathbb{T}^d\times \mathbb{R}^d\times(0,T))$.   Then, we have
    \begin{align*}
    \displaystyle\lim_{\delta \to 0}  \big([u_L(s)\cdot \nabla_xh(s)]_\delta -   u_L(s)\cdot \nabla_x[h(s)]_\delta \big)=0 \text{ in } L^p(\mathbb{T}^d\times \mathbb{R}^d\times(0,T)).
    \end{align*}
    Similarly, if $h\in L^p(\mathbb{T}^d\times(0,T))$ only.   Then, we have
    \begin{align*}
    \displaystyle\lim_{\delta \to 0}  \big([u_L(s)\cdot \nabla_xh(s)]_\delta -   u_L(s)\cdot \nabla_x[h(s)]_\delta \big)=0 \text{ in } L^p(\mathbb{T}^d\times(0,T)).
    \end{align*}
\end{lemma}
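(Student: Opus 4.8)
\textbf{Proof plan for the commutator estimate (Lemma~\ref{Lemma-tech-1}).}

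The plan is to prove the statement following the classical DiPerna--Lions strategy \cite{diperna1989ordinary}, treating the two variants uniformly: in the first case $u_L$ depends only on $x$ (and $t$) while $h$ depends on $(x,r,t)$, but since the mollification $\Theta_\delta$ acts only in the $x$-variable and the $r$-variable is a silent parameter, the argument for both cases is the same once we fix $(r,t)$ (or just $t$). First I would set $u = u_L(\cdot,t)$ and write the commutator
\[
R_\delta := [u\cdot\nabla_x h]_\delta - u\cdot\nabla_x[h]_\delta
= \Theta_\delta * (u\cdot\nabla_x h) - u\cdot\nabla_x(\Theta_\delta * h).
\]
Since $\Div_x u = 0$ we have $u\cdot\nabla_x h = \Div_x(uh)$ in the sense of distributions, so, writing out the convolution explicitly,
\[
R_\delta(x) = \int_{\mathbb{T}^d} \nabla\Theta_\delta(x-y)\cdot\big(u(y)-u(x)\big)\,h(y)\,dy + \big(\Div_x u\big)(x)\,[h]_\delta(x),
\]
and the last term vanishes. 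Then I would perform the standard change of variables $y = x - \delta z$, so that $\nabla\Theta_\delta(x-y) = \delta^{-d-1}(\nabla\Theta)(z)$ and $dy = \delta^d\,dz$, giving
\[
R_\delta(x) = \int_{B(0,1)} (\nabla\Theta)(z)\cdot\frac{u(x-\delta z)-u(x)}{\delta}\, h(x-\delta z)\,dz.
\]

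Next I would take the limit $\delta\to 0$. Pointwise (in $z$), $\frac{u(x-\delta z)-u(x)}{\delta}\to -(Du(x))z$ by the $C^1$-regularity of $u_L$ (here $u_L\in C([0,T];C^2(\mathbb{T}^d))$ is more than enough), and $h(x-\delta z)\to h(x)$ in $L^p_x$ by continuity of translations in $L^p$. A uniform bound $\big|\frac{u(x-\delta z)-u(x)}{\delta}\big|\le \|u_L\|_{C^1}$ together with $|\nabla\Theta|\le C$ compactly supported lets me dominate and conclude, via the $L^p$-version of the dominated convergence / continuity-of-translation argument, that
\[
R_\delta \longrightarrow -\Big(\int_{B(0,1)}(\nabla\Theta)(z)\otimes z\,dz\Big) : (Du)\, h \quad\text{in } L^p(\mathbb{T}^d),
\]
and then the key algebraic fact $\int_{\mathbb{R}^d}(\nabla\Theta)(z)\otimes z\,dz = -I$ (integration by parts, using that $\int\Theta = 1$) shows the limit equals $(\Div_x u)\,h = 0$. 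Integrating in $t$ (and in $r$ in the first variant) and invoking dominated convergence once more in those variables — using that $h\in L^p$ over the full domain and the uniform bound $\|R_\delta(\cdot,r,t)\|_{L^p_x}\le C\|u_L\|_{C^1}\|h(\cdot,r,t)\|_{L^p_x}$ — upgrades the convergence to $L^p$ over $\mathbb{T}^d\times\mathbb{R}^d\times(0,T)$ (resp. $\mathbb{T}^d\times(0,T)$).

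The only genuinely delicate point is the passage from the pointwise-in-$x$ statement to the $L^p(\mathbb{T}^d)$-convergence when $h$ is merely $L^p$ and not continuous: one must split $h = h_1 + h_2$ with $h_1\in C^\infty$ and $\|h_2\|_{L^p}<\varepsilon$, handle $h_1$ by the elementary pointwise argument above, and control the $h_2$-contribution by the uniform bound $\|R_\delta\|_{L^p}\le C\|u_L\|_{C^1}\|h\|_{L^p}$ (which follows from Young's convolution inequality applied to the representation with $\nabla\Theta_\delta$). I would carry this density argument out carefully; everything else is routine. Since this is precisely \cite[Lemma~II.1]{diperna1989ordinary} adapted to the torus with a mute $r$-parameter, one could alternatively just cite that reference, but for self-containedness the above sketch is the proof I would write.
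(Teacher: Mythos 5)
Your plan is correct and is essentially the paper's own argument: both are the DiPerna--Lions commutator computation (convolution representation of $r_\delta$, change of variables $y=x-\delta z$, difference-quotient/mean-value bound giving $\|r_\delta\|_{L^p}\le C\|\nabla u_L\|_\infty\|h\|_{L^p}$, and identification of the limit through $\int \nabla\Theta(z)\otimes z\,dz=-I$ together with $\operatorname{div}_x u_L=0$, treating $r$ as a mute parameter). The only cosmetic differences are that you close the limit passage by a density splitting $h=h_1+h_2$ plus the uniform bound, whereas the paper applies continuity of translations in $L^p$ directly to $h\,\nabla u_L$ with a mean-value estimate for the intermediate point, and you obtain the kernel identity by integration by parts rather than radial symmetry; both routes are fine.
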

\begin{proof} 
Set $r_\delta(s):=[u_L(s)\cdot \nabla_xh(s)]_\delta -   u_L(s)\cdot \nabla_x[h(s)]_\delta,$
we have
   a.e.  $ s\in[0,T]$
\begin{align}
&
\left\Vert r_{\delta}(s)\right\Vert _{L^{p}(\T^d\times\R^d)}\leq C\left\Vert \nabla u_L(s)\right\Vert
_{\infty}\left\Vert h(s)\right\Vert _{L^{p}(\T^d\times\R^d)},   \label{lem-17-eqn1-2-*}%
\\
&\lim_{\delta}r_{\delta}=0\text{ in    } L^{p}(\T^d\times\R^d\times (0,T))  \label{lem-17-eqn2-2-*},%
\end{align}
where   $C>0$   independent of  $\delta$.   Indeed, let us show \eqref{lem-17-eqn1-2-*}, note    that
\[
r_{\delta}\left(s,  x,r\right)  =-\int_{\R^d}\left(  u_L\left(  x,s\right)  -u_L\left(  y,s\right)
\right)  \cdot\nabla_{x}\rho_{\delta}\left(  x-y\right)  h\left(s,  y,r\right)  dy.
\]%
Consider    the following   change  of  variables   $z=\dfrac{x-y}{\delta}$  to  get 
\begin{align*}
    r_{\delta}\left(s,  x,r\right)  &=-\int_{\R^2}  \dfrac{u_L\left( x ,s\right)  -u_L\left(  x-\delta  z,s\right)
}{\delta}  \cdot\nabla\rho\left(  z\right)  h\left(s,   x-\delta  z,r\right)  dz\\
&=-\int_{\R^d}  \int_0^1 \nabla u_L(x-\gamma\delta z)  z d\gamma \cdot\nabla\rho\left(  z\right)  h\left(s,  x-\delta  z,r\right)  dz.
\end{align*}
where we used mean value theorem \textit{i.e.}
$   u_L\left( x ,s\right)  -u_L\left(  x-\delta  z,s\right)=\delta\int_0^1 \nabla u_L(x-\gamma\delta z)  z d\gamma.$
Recall that    $\text{supp}\rho\subset    B(0,1)$, thus  
\begin{align*}
  \vert  r_{\delta}\left(s,  x,r\right)\vert&\leq \Vert    \nabla   u_L(s)\Vert_{\infty}\int_{\text{supp}\rho}\vert z\vert  \vert\nabla\rho\left(  z\right)\vert  \vert    h\left(s,  x-\gamma\delta z,r\right)\vert  dz\\
  &\leq    \Vert \nabla   u_L(s)\Vert_{\infty} \Vert\nabla\rho \Vert_{C^1_b}\int_{\text{supp}\rho}    \vert    h\left(s,  x-\gamma\delta z,r\right)\vert  dz
\end{align*}
  and  by H\"older’s inequality and  Fubini's theorem, we get
\begin{align*}
    \Vert   r_{\delta}(s)\Vert_{L^{p}(\T^d\times\R^d)}^p&\leq \Vert    \nabla   u_L(s)\Vert_{\infty}^p\Vert\rho \Vert_{C^1_b}^p\int_{\R^d}\int_{\T^d}  (\int_{\text{supp}\rho} \vert    h\left(s,  x-\gamma\delta z,r\right)\vert  dz)^pdxdr\\
    &\leq \Vert    \nabla   u_L(s)\Vert_{\infty}^p\Vert\rho \Vert_{C^1_b}^p \int_{\text{supp}\rho}\int_{\R^d}\int_{\T^d}   \vert    h\left(s,  x-\gamma\delta z,r\right)\vert^p  dxdrdz\\
    &\leq \Vert    \nabla   u_L(s)\Vert_{\infty}^p\Vert\rho \Vert_{C^1_b}^p  \Vert    h\left(s\right)\Vert_{L^{p}(\T^d\times\R^d)}^p, 
 \end{align*}
 by using the  periodicity with respect to the $x$-variable.
 Concerning    \eqref{lem-17-eqn2-2-*},  we  have
\begin{equation*}
L^p(\T^d\times\R^d\times  (0,T))\mbox{-}\lim_{\delta\rightarrow0}r_\delta(s)=-h(s,\cdot)\left(\int_{\R^2}\nabla    u_L(s,\cdot)z\cdot\nabla\rho(z)dz\right).
\end{equation*} Indeed,  by using  that $z\in \text{supp}\rho:=\mathbf{F} \subset B(0,1)$, we  get
\begin{align*}
&\int_0^T\int_{\R^d}\int_{\T^d}\vert \int_{\R^d}  \int_0^1 \nabla u_L(x-\gamma\delta z)  z d\gamma \cdot\nabla\rho\left(  z\right)  h\left(s,  x-\delta  z,r\right)  dz\\&\qquad\qquad\qquad\qquad-    h(s,x,r)\left(\int_{\R^d}\nabla  u_L(s,x)z\cdot\nabla\rho(z)dz\right)\vert^pdxdr ds\\
&\leq\int_0^T\int_{\R^d}\int_{\T^d} \int_{\mathbf{F}}  \int_0^1\vert   \nabla u_L(x-\gamma\delta z)  z  \cdot\nabla\rho\left(  z\right)  h\left(s,  x-\delta  z,r\right)  -h(s,x,r)\nabla u_L(s,x)z\cdot\nabla\rho(z)\vert^2d\gamma  dzdxdrds\\
&\leq \int_0^T\int_{\mathbf{F}}  \int_0^1\int_{\R^d}\int_{\T^d} \vert   [h\left(s,  x-\delta  z,r\right)\nabla u_L(x-\gamma\delta z)      -h(s,x,r)\nabla u_L(s,x)]z  \cdot\nabla\rho\left(  z\right)\vert^pdxdrd\gamma  dz ds
\\
&\leq  \Vert\rho \Vert_{C^1_b}^p\int_0^T\int_{\mathbf{F}}  \int_0^1\int_{\R^d}\int_{\T^d} \vert  h\left(s,  x-\delta  z,r\right)\nabla u_L(x-\gamma\delta z)      -h(s,x,r)\nabla u_L(s,x) \vert^pdxdrd\gamma  dz ds\\
&\leq  2^{p-1}\Vert\rho \Vert_{C^1_b}^p\int_0^T\int_{\mathbf{F}}  \int_0^1\int_{\R^d}\int_{\T^d} \vert  h\left(s,  x-\delta  z,r\right)[\nabla u_L(x-\gamma\delta z)-\nabla u_L(x-\delta z)] \vert^pdxdrd\gamma  dz ds\\
&+2^{p-1}\Vert\rho \Vert_{C^1_b}^p\int_0^T\int_{\mathbf{F}}  \int_0^1\int_{\R^d}\int_{\T^d} \vert  h\left(s,  x-\delta  z,r\right)\nabla u_L(x-\delta z)      -h(s,x,r)\nabla u_L(s,x) \vert^pdxdrd\gamma  dz ds\\
&:=I^1_\delta+I^2_\delta.
\end{align*}
Using the continuity of translations in $L^2(\T^d\times \R^d \times (0,T))$ for the function $h\nabla u_L$, we get  $\displaystyle\limsup_{\delta\to 0}        I^2_\delta=0.$
Concerning  $I^1_\delta$,    by   mean-value theorem we  get    
\begin{align*}
   I_\delta^1= &2^{p-1}\Vert\rho \Vert_{C^1_b}^p\int_0^T\int_{\mathbf{F}}  \int_0^1\int_{\R^d}\int_{\T^d} \vert  h\left(s,  x-\delta  z,r\right)[\nabla u_L(x-\gamma\delta z)-\nabla u_L(x-\delta z)] \vert^pdxdrd\gamma  dz ds\\ 
   & \leq  2^{p-1}\Vert\rho \Vert_{C^1_b}^p\delta^p\Vert u_L\Vert_{C^2_b}^p\int_0^T\int_{\mathbf{F}}  \int_{\R^d}\int_{\T^d} \vert  h\left(s,  x-\delta  z,r\right)\vert^pdxdr  dz ds\\
  & \leq 2^{p-1}\Vert\rho \Vert_{C^1_b}^p\delta^p\Vert u_L\Vert_{C^2_b}^p \Vert    h\left(s\right)\Vert_{L^{p}(\T^d\times\R^d\times (0,T))}^p \to 0 \text{ as } \delta \to 0.
\end{align*}
Finally,    since   $\rho$  is  radially    symmetric,  we  get
  $\int_{\R^d}z_i\partial_j\rho(z)dz=-\delta_{ij}$  and so 
$$\int_{\R^d}\nabla u_L(s,\cdot)z\cdot\nabla\rho(z)dz=-\Div u_L=0,$$
which   gives   \eqref{lem-17-eqn2-2-*}. Then, the first claim of  \autoref{Lemma-tech-1} follows by integrating from $0$ to $T$ the above estimates. For the second part, it is clear that it follows, by neglecting integration with respect to the variable $r$ in the above calculation.
\end{proof}
\begin{acknowledgements}  The research of  the author is
funded by the European Union (ERC, NoisyFluid, No. 101053472). Views and opinions expressed are however those of the author only and do not necessarily reflect those of the European Union or the European Research Council. Neither the European Union nor the granting authority can be held responsible for them. The author  would like to thank  Franco Flandoli for  many valuable discussions. 
\end{acknowledgements}

\bibliographystyle{plain} 
\bibliography{High_turbulent-limit}{}

\begin{thebibliography}{10}

\bibitem{afonso2005nonlinear}
M~M. Afonso and D.~Vincenzi.
\newblock Nonlinear elastic polymers in random flow.
\newblock {\em Journal of Fluid Mechanics}, 540:99--108, 2005.

\bibitem{Balk}
E~Balkovsky, A~Fouxon, and V~Lebedev.
\newblock Turbulent dynamics of polymer solutions.
\newblock {\em Physical review letters}, 84(20):4765--4768, 2000.

\bibitem{Bensoussan1978asymptotic}
A.~Bensoussan, J-L Lions, and G.~Papanicolaou.
\newblock {\em Asymptotic analysis for periodic structures}, volume 374.
\newblock American Mathematical Soc., 1978.

\bibitem{bobkov09weighted}
S.~G. Bobkov and M.~Ledoux.
\newblock Weighted {P}oincar{\'e}-type inequalities for {C}auchy and other
  convex measures.
\newblock {\em The Annals of Probability}, 37(2):403--427, 2009.

\bibitem{bonnefont2016spectral}
M.~Bonnefont, A.~Joulin, and Y.~Ma.
\newblock Spectral gap for spherically symmetric log-concave probability
  measures, and beyond.
\newblock {\em Journal of Functional Analysis}, 270(7):2456--2482, 2016.

\bibitem{Brezis}
H.~Brezis.
\newblock {\em Functional analysis, {S}obolev spaces and partial differential
  equations}, volume~2.
\newblock Springer, 2011.

\bibitem{BriTL09}
C.~L. Bris and T.~Lelièvre.
\newblock Multiscale modelling of complex fluids: a mathematical initiation.
\newblock {\em Multiscale modeling and simulation in science}, pages 49--137,
  2009.

\bibitem{ButFlaLuo}
F.~Butori, F.~Flandoli, and E.~Luongo.
\newblock On the {I}t\^{o}-{S}tratonovich {D}iffusion {L}imit for the
  {M}agnetic {F}ield in a 3{D} {T}hin {D}omain.
\newblock {\em arXiv preprint arXiv:2401.15701}, 2024.

\bibitem{BFLT2024}
F.~Butori, F.~Flandoli, E.~Luongo, and Y.~Tahraoui.
\newblock Background {V}lasov equations and {Y}oung measures for passive scalar
  and vector advection equations under special stochastic scaling limits.
\newblock {\em arXiv preprint arXiv:2407.10594}, 2024.

\bibitem{FENE-YAS-FED}
F.~Butori and Y.~Tahraoui.
\newblock {FENE} dumbbell model and turbulence via stochastic scaling and
  singular limits.
\newblock {\em preprint}.

\bibitem{diperna1989ordinary}
R.~J. DiPerna and P.L. Lions.
\newblock Ordinary differential equations, transport theory and {S}obolev
  spaces.
\newblock {\em Inventiones mathematicae}, 98(3):511--547, 1989.

\bibitem{Edwards}
R.~E. Edwards.
\newblock {\em Functional analysis}.
\newblock New York: Dover Publications Inc., 1995.

\bibitem{Masmoudi-Vlasov}
N.~El~Ghani and N.~Masmoudi.
\newblock Diffusion limit of the {V}lasov-{P}oisson-{F}okker-{P}lanck system.
\newblock {\em Commun. Math. Sci.}, 8(2):463--479, 2010.

\bibitem{evans2022partial}
Lawrence~C Evans.
\newblock {\em Partial Differential Equations}, volume~19.
\newblock American Mathematical Soc., 2022.

\bibitem{fetecau2015}
RC~Fetecau and W.~Sun.
\newblock First-order aggregation models and zero inertia limits.
\newblock {\em Journal of Differential Equations}, 259(11):6774--6802, 2015.

\bibitem{Flandoli2021delayed}
F.~Flandoli, L.~Galeati, and D.~Luo.
\newblock Delayed blow-up by transport noise.
\newblock {\em Communications in Partial Differential Equations},
  46(9):1757--1788, 2021.

\bibitem{Flandoli2021scaling}
F.~Flandoli, L.~Galeati, and D.~Luo.
\newblock Scaling limit of stochastic 2d {E}uler equations with transport
  noises to the deterministic {N}avier--{S}tokes equations.
\newblock {\em Journal of Evolution Equations}, 21(1):567--600, 2021.

\bibitem{Flandoli2024quantitative}
F.~Flandoli, L.~Galeati, and D.~Luo.
\newblock Quantitative convergence rates for scaling limit of spdes with
  transport noise.
\newblock {\em Journal of Differential Equations}, 394:237--277, 2024.

\bibitem{FL1}
F.~Flandoli and D.~Luo.
\newblock High mode transport noise improves vorticity blow-up control in 3{D}
  {N}avier--{S}tokes equations.
\newblock {\em Probability Theory and Related Fields}, 180:309--363, 2021.

\bibitem{FlandoliLuo2024}
F.~Flandoli and D.~Luo.
\newblock On the {B}oussinesq hypothesis for a stochastic {P}roudman-{T}aylor
  model.
\newblock {\em SIAM J. Math. Anal}, 56(3):3886–3923, 2024.

\bibitem{FlaElibook}
F.~Flandoli and E.~Luongo.
\newblock {\em Stochastic partial differential equations in fluid mechanics},
  volume 2330.
\newblock Springer Nature, 2023.

\bibitem{FlaTah24}
F.~Flandoli and Y.~Tahraoui.
\newblock Stretching of polymers and turbulence: Fokker {P}lanck equation,
  special stochastic scaling limit and stationary law.
\newblock {\em Journal of Differential Equations}, 452:113789, 2026.

\bibitem{Galeati}
L.~Galeati.
\newblock On the convergence of stochastic transport equations to a
  deterministic parabolic one.
\newblock {\em Stochastics and Partial Differential Equations: Analysis and
  Computations}, 8(4):833--868, 2020.

\bibitem{Gerashen}
S.~Gerashchenko, C.~Chevallard, and V.~Steinberg.
\newblock Single-polymer dynamics: Coil-stretch transition in a random flow.
\newblock {\em Europhysics Letters}, 71(2):221--227, 2005.

\bibitem{Ghattassi}
M.~Ghattassi, X.~Huo, and N.~Masmoudi.
\newblock On the diffusive limits of radiative heat transfer system {I}:
  Well-prepared initial and boundary conditions.
\newblock {\em SIAM Journal on Mathematical Analysis}, 54(5):5335--5387, 2022.

\bibitem{goudon2005}
T.~Goudon.
\newblock Hydrodynamic limit for the {V}lasov--{P}oisson--{F}okker--{P}lanck
  system: Analysis of the two-dimensional case.
\newblock {\em Mathematical Models and Methods in Applied Sciences},
  15(05):737--752, 2005.

\bibitem{Herda2018}
M.~Herda and L.~M. Rodrigues.
\newblock Large-time behavior of solutions to
  {V}lasov-{P}oisson-{F}okker-{P}lanck equations: from evanescent collisions to
  diffusive limit.
\newblock {\em Journal of Statistical Physics}, 170(5):895--931, 2018.

\bibitem{jabin2000}
P-E. Jabin.
\newblock Macroscopic limit of {V}lasov type equations with friction.
\newblock In {\em Annales de l'Institut Henri Poincar{\'e} C, Analyse non
  lin{\'e}aire}, volume~17, pages 651--672. Elsevier, 2000.

\bibitem{Jikov2012homogenization}
V.~V. Jikov, S.~M Kozlov, and O.~A. Oleinik.
\newblock {\em Homogenization of differential operators and integral
  functionals}.
\newblock Springer Science \& Business Media, 2012.

\bibitem{luo2025enhanced}
D.~Luo.
\newblock Enhanced dissipation for stochastic {N}avier--{S}tokes equations with
  transport noise.
\newblock {\em Journal of Dynamics and Differential Equations}, 37(1):859--894,
  2025.

\bibitem{MK}
A.~J Majda and P.~R Kramer.
\newblock Simplified models for turbulent diffusion: theory, numerical
  modelling, and physical phenomena.
\newblock {\em Physics reports}, 314(4-5):237--574, 1999.

\bibitem{Nieto2001high}
J.~Nieto, F.~Poupaud, and J.~Soler.
\newblock High-field limit for the {V}lasov-{P}oisson-{F}okker-{P}lanck system.
\newblock {\em Archive for rational mechanics and analysis}, 158:29--59, 2001.

\bibitem{Papini}
A.~Papini, F.~Flandoli, and R.~Huang.
\newblock Turbulence enhancement of coagulation: The role of eddy diffusion in
  velocity.
\newblock {\em Physica D: Nonlinear Phenomena}, 448:133726, 2023.

\bibitem{Pavliotis2008multiscale}
G.~Pavliotis and A.~Stuart.
\newblock {\em Multiscale methods: averaging and homogenization}.
\newblock Springer Science and Business Media, 2008.

\bibitem{PicardoLanceVinc}
J.~R. Picardo, L.C. VI M.~Plan Emmanuel, and D.~Vincenzi.
\newblock Polymers in turbulence: stretching statistics and the role of extreme
  strain rate fluctuations.
\newblock {\em Journal of Fluid Mechanics}, 969:A24, 2023.

\bibitem{Poupaud1992}
F.~Poupaud.
\newblock Runaway phenomena and fluid approximation under high fields in
  semiconductor kinetic theory.
\newblock {\em ZAMM-Journal of Applied Mathematics and Mechanics},
  72(8):359--372, 1992.

\bibitem{Poupaud2000}
F.~Poupaud and J.~Soler.
\newblock Parabolic limit and stability of the {V}lasov--{F}okker--{P}lanck
  system.
\newblock {\em Mathematical Models and Methods in Applied Sciences},
  10(07):1027--1045, 2000.

\bibitem{ReedSimon75}
M.~Reed and B.~Simon.
\newblock {\em Methods of modern mathematical physics II: Fourier Analysis,
  Self-Adjointness}.
\newblock 1975.

\bibitem{Rudin}
W.~Rudin.
\newblock Real and complex analysis.
\newblock 1974.

\end{thebibliography}

\end{document}